\DeclareMathOperator{\cl}{cl}
\newcommand{\prob}[3]{\begin{trivlist}\item[]
    #1\\
    \textbf{Input:} #2\\
    \textbf{Output:} #3
\end{trivlist}}
\newcommand{\comps}{\mathbb{C}}
\title{The Complexity of the Greedoid Tutte Polynomial}
\author{Christopher Knapp\\
\small Brunel University\\[-0.8ex]
\small Kingston Lane\\[-0.8ex]
\small Uxbridge, UK\\
\small\tt christopher.knapp2@brunel.ac.uk
\and
Steven Noble\\
\small Birkbeck, University of London\\[-0.8ex]
\small Malet Street\\[-0.8ex]
\small London, UK\\
\small\tt steven.noble@bbk.ac.uk}
\newtheorem{theorem}{Theorem}
\newtheorem{proposition}[theorem]{Proposition}
\newtheorem{lemma}[theorem]{Lemma}
\newtheorem{corollary}[theorem]{Corollary}
\theoremstyle{definition}
\newtheorem{definition}[theorem]{Definition}
\newtheorem{example}[theorem]{Example}
\begin{document}

\maketitle

\begin{abstract}
We consider the Tutte polynomial of three classes of greedoids: those arising from rooted graphs, rooted digraphs and binary matrices. We establish the computational complexity of evaluating each of these polynomials at each fixed rational point $(x,y)$. In each case we show that evaluation is $\#$P-hard except for a small number of exceptional cases when there is a polynomial time algorithm. In the binary case, establishing $\#$P-hardness along one line relies on Vertigan's unpublished result on the complexity of counting bases of a matroid. For completeness, we include an appendix providing a proof of this result.

\noindent \textbf{Mathematics Subject Classifications:} 05C31, 68Q17, 05B35

\end{abstract}

\section{Introduction}

Tutte's eponymous polynomial is perhaps the most widely studied two-variable graph and matroid polynomial due to its many specializations, their
vast breadth and the richness of the underlying theory. Discussion of the Tutte polynomial and closely related polynomials fills an entire handbook~\cite{zbMATH07553843}.
Tutte first introduced the Tutte polynomial of a graph, as the \emph{dichromate} in~\cite{zbMATH03087501}. It is closely related to Whitney's rank
generating function~\cite{MR1503085} which Tutte extended from graphs to matroids in his PhD thesis~\cite{Tutte-thesis}.
Crapo~\cite{MR262095} later extended the definition of the Tutte polynomial to matroids.
See Farr~\cite{Farr-Chapter} for more on the early history of the Tutte polynomial.

The simplest definition of the Tutte polynomial $T(G;x,y)$ of a graph $G$ is probably in terms of the rank function $r$. Given a graph $G$ and set $A$ of its edges, we have $r(A)=|V(G)|-k(G|A)$, where $k(G|A)$ is the number of connected components of the graph obtained from $A$ by deleting the edges in $E(G)-A$ (and keeping all the vertices).
\begin{definition}
For a graph $G$ with edge set $E$, we have
\[ T(G;x,y) =   \sum_{A\subseteq E} (x-1)^{r(E)-r(A)} (y-1)^{|A|-r(A)}.\]
\end{definition}

By making appropriate substitutions for $x$ and $y$, a huge number of graph invariants with connections to diverse areas of mathematics may be obtained.
We summarise just a few of these evaluations that are particularly relevant later in this paper. A \emph{spanning subgraph} of a graph $G$ is a subgraph including all the vertices of $G$.
\begin{itemize}
\item $T(G;1,1)$  is the number of maximal spanning forests of $G$. (If $G$ is connected, then this is the number of spanning trees.)
\item $T(G;2,1)$ is the number of spanning forests of $G$.
\item $T(G;1,2)$ is the number of spanning subgraphs of $G$ having the same number of components as $G$.
\item $T(G;1,0)$ is the number of acyclic orientations of $G$ with one predefined source vertex per component of $G$~\cite{GreeneZaslavsky}.
\end{itemize}
Other evaluations (up to a simple pre-factor) include the reliability polynomial, chromatic polynomial and partition function of the $q$-state Potts model. For a full list of evaluations see~\cite{BrylawskiOxley,Ellis-MonaghanMerino,zbMATH07553843}.

Given a graph polynomial of this type, a natural question is to determine its complexity, that is to classify the points $(a,b)$ according to whether there is a polynomial time algorithm to evaluate the polynomial at $(a,b)$ or whether the evaluation is computationally intractable.
Because of the inherent difficulties of measuring the complexity of algorithms involving arbitrary real numbers, we restrict $a$ and $b$ to being rational. This question was completely resolved in a groundbreaking paper by Jaeger, Vertigan and Welsh~\cite{JVW}. A stronger result was obtained by Vertigan and Welsh~\cite{zbMATH00563595}, who proved the theorem below.
 For $\alpha$ in $\mathbb{Q}-\{0\}$, let $H_{\alpha}=\{(x,y)\in \mathbb{Q}^2: (x-1)(y-1) = \alpha\}$, and let $H_0^x=\{(1,y): y \in \mathbb{Q}\}$ and $H_0^y = \{(x,1) : x \in \mathbb{Q}\}$. This family of hyperbolae seems to play a special role in the theory of the Tutte polynomial, both in terms of its evaluations and its complexity.

\begin{theorem}[Vertigan, Welsh]
\label{VW}
Evaluating the Tutte polynomial of a bipartite planar graph at any fixed point $(a,b)$ in the rational plane is $\#$P-hard apart from when $(a,b)$ lies on $H_1$ or $H_2$, or when $(a,b)$ equals $(-1,-1)$ or $(1,1)$, when there exists a polynomial-time algorithm.
\end{theorem}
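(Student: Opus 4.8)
The plan is to show that the displayed set is precisely the easy locus, by exhibiting polynomial-time algorithms on it and, for every other rational point, giving a Turing reduction from a classical $\#$P-complete problem using gadgets that keep the inputs planar and bipartite. The easy cases are quick to describe. On $H_1$ the substitution $y-1=(x-1)^{-1}$ collapses the subset expansion to the single term $T(G;x,y)=x^{|E|}(x-1)^{r(E)-|E|}$, which is computable in polynomial time; $(1,1)$ is the number of maximal spanning forests, supplied by the Matrix--Tree theorem, and $T(G;-1,-1)=\pm(-2)^{\dim B}$, where $B$ is the bicycle space, whose dimension is obtained by Gaussian elimination over $GF(2)$. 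The one genuinely planar ingredient is $H_2$: there $T$ equals an explicitly computable monomial factor times the partition function of the Ising (two-state Potts) model, which Kasteleyn's Pfaffian method evaluates in polynomial time for planar graphs. This is exactly why planarity cannot be dropped, since for general graphs the Ising partition function is $\#$P-hard.

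For hardness I would build everything from two local operations: $k$-thickening, replacing each edge by $k$ parallel edges, and $k$-stretching, replacing each edge by a path of $k$ edges. Both preserve planarity, and thickening together with odd-length stretches preserves bipartiteness. Passing to the multivariate (Potts) form of the Tutte polynomial, applying a $k$-thickening to $G$ and evaluating at a fixed point $(a,b)$ yields a polynomial-time-computable prefactor times $T(G;x_k,y_k)$, where $(x_k,y_k)$ is the point of the hyperbola $H_q$, $q=(a-1)(b-1)$, whose second coordinate is $b^k$; a $k$-stretch does the same in the first coordinate. A rational number other than $0,\pm1$ has infinite multiplicative order, so the numbers $b,b^2,b^3,\dots$ are pairwise distinct, while $(y-1)^{r(E)}T(G;\,\cdot\,)$ restricted to $H_q$ is a polynomial of degree at most $|E(G)|$. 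Hence an oracle for the evaluation at $(a,b)$, applied to $|E(G)|+1$ thickened copies of $G$ and followed by Lagrange interpolation, recovers $T(G;x_0,y_0)$ at any target point $(x_0,y_0)$ of $H_q$; the handful of bases $b\in\{0,-1\}$ are covered by stretching instead. This gives an all-or-nothing principle: within each hyperbola $H_q$ with $q\neq0$, and within each of the lines $x=1$ and $y=1$, all points are polynomially equivalent for the class of (bipartite) planar graphs, the only exceptions being the isolated easy points $(1,1)$ and $(-1,-1)$, which have already been handled directly.

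It then suffices to seed one $\#$P-hard evaluation on each hyperbola and each line. For the lines the anchors are classical: counting forests, $T(\cdot\,;2,1)$, and counting connected spanning subgraphs, $T(\cdot\,;1,2)$ (the reliability polynomial), are $\#$P-hard even for planar graphs. For $H_q$ with $q$ an integer at least $3$ the anchor is the chromatic polynomial, via $T(\cdot\,;1-q,0)=\pm q^{-k(G)}P(\cdot\,;q)$, which is $\#$P-hard for planar graphs; negative integer $q$ is seeded similarly from the signed count of acyclic orientations. To pass from planar to bipartite planar I would use one more gadget: subdividing every edge of a planar graph $G$ once produces a bipartite planar graph $G'$ with $T(G';x,y)$ equal to a nonzero (for $x\neq\pm1$), explicitly computable multiple of $T(G;x^2,y')$ at the corresponding point $(x^2,y')\in H_q$; so a bipartite-planar oracle at $(a,b)$ simulates a planar oracle on the same hyperbola, and with the all-or-nothing principle this yields $\#$P-hardness over bipartite planar graphs at every point of $H_q$, for every $q$ on which a single hard point is known. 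The exceptional vertical line $x=-1$, where the gadget factors vanish, is handled by a separate small construction.

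The main obstacle is the seeding step for the remaining hyperbolae, namely those $H_q$ with $q$ rational but not an integer at least $3$, not a negative integer, and not in $\{0,1,2\}$: on each of these one must still produce a $\#$P-hard evaluation realised by planar graphs, which requires a tailored reduction rather than an off-the-shelf counting result --- here I expect to run the interpolation argument in a second variable, or to invoke hardness of the Potts partition function at the relevant non-integer $q$. Alongside this, the parts that look routine still need care: verifying that each classical hardness result survives restriction first to planar and then to bipartite planar inputs, and keeping the parity bookkeeping consistent so that thickening and stretching genuinely preserve bipartiteness. The one remaining non-routine ingredient is the Kasteleyn computation behind the $H_2$ exception, which is standard but indispensable.
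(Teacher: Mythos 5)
This statement is not proved in the paper at all: it is quoted from Vertigan and Welsh, and the paper offers only a two-sentence sketch of the hardness strategy in its introduction. Your outline matches that sketch. The easy points are handled correctly ($H_1$ by the collapse of the subset expansion to $x^{|E|}(x-1)^{r(E)-|E|}$, $(1,1)$ by Matrix--Tree, $(-1,-1)$ via the bicycle space, $H_2$ via Kasteleyn's Pfaffian method for the planar Ising model), and the thickening/stretch plus interpolation machinery you describe for propagating hardness along each $H_\alpha$ is the right engine and is essentially the same one this paper adapts to greedoids.

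There is, however, a genuine gap, which you name yourself but do not close: exhibiting a $\#$P-hard point on $H_\alpha$ for every rational $\alpha\notin\{0,1,2\}$, realised by planar (indeed bipartite planar) inputs. This seeding step is the technical heart of the theorem, and neither of your proposed remedies works as stated. ``Interpolation in a second variable'' has no second variable available, because both the $k$-thickening and the $k$-stretch preserve the parameter $\alpha=(a-1)(b-1)$ and therefore never move you off the hyperbola you started on; some further construction that changes $\alpha$ is required, and inventing one that also preserves planarity and bipartiteness is precisely the hard part. Invoking ``hardness of the Potts partition function at the relevant non-integer $q$'' is circular, since for planar graphs that statement is essentially the one being proved. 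A related weakness is that several of your anchors --- counting forests $T(\,\cdot\,;2,1)$, counting connected spanning subgraphs $T(\,\cdot\,;1,2)$, and the chromatic polynomial at negative integers --- are classical $\#$P-hardness results only for general graphs; their restrictions to planar inputs are themselves part of the Vertigan--Welsh machinery rather than off-the-shelf facts, so they too would need proof. In short, your proposal is a correct reconstruction of the architecture of the published argument, but with the seeding step missing it does not amount to a proof of the theorem.
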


Roughly speaking, the proof of the hardness part of this result (at least without the planar bipartite restriction) proceeds as follows.
By exploiting a result of Brylawski~\cite{Brylawski},
one first shows that for most points $(a,b)$, the existence of a polynomial time algorithm to evaluate $T(G;a,b)$ for every graph $G$
would imply the existence of a polynomial time algorithm to evaluate $T(G;x,y)$ at every point $(x,y)$ in $H_{\alpha}$, where $\alpha=(a-1)(b-1)$.
Given a graph $G$, let $G^k$ and $G_k$ denote, respectively, the graph obtained by replacing every edge of $G$ by $k$ parallel edges and the graph obtained by replacing every non-loop of $G$ by a path comprising $k$ edges and every loop by a circuit comprising $k$ edges. The former is known as the \emph{$k$-thickening} of $G$ and the latter as the \emph{$k$-stretch} of $G$. Brylawski gave expressions for the Tutte polynomials of $G^k$ and $G_k$ in terms of the Tutte polynomial of $G$. By varying $k$, one may obtain expressions for $T(G;a_k,b_k)$ at a sequence $\{(a_k,b_k)\}$ of points on $H_{\alpha}$, and then solve for the coefficients of the one-variable polynomial obtained by restricting the domain of $T$ to $H_\alpha$. There remain several special cases because the sequence $\{(a_k,b_k)\}$ sometimes contains only a small number of distinct points. The second step proceeds by determining a $\#$P-hard point on each curve $H_{\alpha}$. Many of these come from evaluations of the chromatic polynomial.

The Tutte polynomial is essentially a generating function for the number of subsets of the edges of a graph according to their rank and size.
Following the work of Jaeger, Vertigan and Welsh, many authors have established corresponding results for a variety of graph polynomials defined in a similar way but using different notions of rank. These include the cover polynomial~\cite{zbMATH06072038}, the Bollob\'as--Riordan polynomial~\cite{zbMATH05770222}, the interlace polynomial~\cite{zbMATH05623498}, the rank generating function of a graphic $2$-polymatroid~\cite{zbMATH05039067} and the Tutte polynomial of a bicircular matroid~\cite{zbMATH05039064}. In each case, the proof techniques have some similarities: the bulk of the work is done using a graph operation analogous to the thickening, but there are considerable technical difficulties required to deal with the special cases and to complete the proof. These results provide evidence for Makowsky's Difficult Point Conjecture which states that for an $n$-variable graph polynomial $P$ that may be defined in monadic second order logic, there is a set $S$ of points with the following properties:
\begin{enumerate}
\item For every $\mathbf x\in S$, there is a polynomial time algorithm to evaluate $P(\mathbf x)$;
\item For every $\mathbf x\notin S$, it is $\#$P-hard to evaluate $P(\mathbf x)$;
\item The set $S$ is the finite union of algebraic sets in $\comps^n$ each having dimension strictly less than $n$.
\end{enumerate}
For full details see~\cite{zbMATH05552168}.

In this paper we prove results analogous to Theorem~\ref{VW} for two graph polynomials, the Tutte polynomials of a rooted graph and a rooted digraph, and a polynomial of binary matrices, the Tutte polynomial of a binary greedoid.
Each of these polynomials is a special case of the Tutte polynomial of a greedoid introduced by Gordon and McMahon~\cite{GordonMcMahon} and the proofs have considerable commonality. (All the necessary definitions are provided in the next sections.) The graph polynomials are the analogue of the Tutte polynomial for rooted graphs and rooted digraphs, and our results provide further evidence
for Makowsky's Difficult Point Conjecture.

An overview of the paper is as follows. In Section~\ref{Rooted Graphs, Rooted Digraphs and Greedoids} we provide necessary background on rooted graphs, rooted digraphs, greedoids and computational complexity. In the following section we describe the Tutte polynomial of a greedoid and list some of its evaluations for each of the three classes of greedoid that we work with.
Within our hardness proofs we require an analogue of the thickening operation and various other constructions which can be defined for arbitrary greedoids, and may be of independent interest. We describe these in Section~\ref{sec:constructions} and provide analogues of Brylawski's results~\cite{Brylawski} expressing the Tutte polynomial for these constructions in terms of the Tutte polynomials of their constituent greedoids.

In Section~\ref{section rooted graphs hardness}, we prove the following result completely determining the complexity of evaluating the Tutte polynomial of a rooted graph at a rational point.
\begin{theorem}
\label{maintheoremrootedgraph}
Evaluating the Tutte polynomial of a connected, rooted, planar, bipartite graph at any fixed point $(a,b)$ in the rational $xy$-plane is $\#$P-hard apart from when $(a,b)$ equals $(1,1)$ or when $(a,b)$ lies on $H_1$.

There are polynomial time algorithms to evaluate the Tutte polynomial of a rooted graph at $(1,1)$ and at any point lying on $H_1$.
\end{theorem}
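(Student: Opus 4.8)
I would prove the two parts of the statement separately. For the polynomial-time cases, first take a point $(x,y)$ on $H_1$, so $(x-1)(y-1)=1$ and in particular $x\ne1$. Then $(y-1)^{|A|-r(A)}=(x-1)^{r(A)-|A|}$ for every $A\subseteq E$, each summand of the greedoid Tutte polynomial collapses to $(x-1)^{r(E)-|A|}$, and
\[ T(\Gamma;x,y)=(x-1)^{r(E)}\sum_{A\subseteq E}(x-1)^{-|A|}=(x-1)^{r(E)}\Bigl(\frac{x}{x-1}\Bigr)^{|E|}=\frac{x^{|E|}}{(x-1)^{|E|-r(E)}}, \]
a closed form depending only on $|E|$ and the greedoid rank $r(E)$ of the ground set; for the branching greedoid of a rooted graph $r(E)$ is the number of edges of a largest subtree containing the root, computed greedily. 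At $(1,1)$ only the sets $A$ with $r(A)=|A|=r(E)$ survive, and for a branching greedoid these are exactly the spanning trees of the component of the root, so $T(\Gamma;1,1)$ counts those spanning trees and is evaluated by the Matrix--Tree theorem. This gives the polynomial-time assertions of Theorem~\ref{maintheoremrootedgraph}.

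For the hardness part I would follow the Jaeger--Vertigan--Welsh strategy sketched in the introduction, in two stages. The first stage spreads hardness along hyperbolae: using the analogue of Brylawski's thickening formula established in Section~\ref{sec:constructions}, a polynomial-time algorithm evaluating $T(\Gamma;a,b)$ on all connected rooted planar bipartite graphs would, by Lagrange interpolation over the $k$-thickenings $\Gamma^k$, yield a polynomial-time algorithm for the restriction of $T(\Gamma;\cdot,\cdot)$ to the whole hyperbola $H_\alpha$ with $\alpha=(a-1)(b-1)$ --- except at the degenerate values (essentially $b\in\{-1,0,1\}$, and $a\in\{-1,0,1\}$ once a second construction is used in the role of the $k$-stretch) for which the thickenings realise only finitely many points of $H_\alpha$. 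Hence it suffices to exhibit, for every $\alpha\ne1$, one $\#$P-hard point on $H_\alpha$ that is attained by a connected rooted planar bipartite graph, together with direct hardness arguments for the finitely many degenerate lines: $x=1$ and $y=1$ with the point $(1,1)$ removed, and $x=0$, $x=-1$, $y=0$, $y=-1$.

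For the seed points I would reduce from a $\#$P-hard problem about planar bipartite graphs --- an evaluation of the ordinary Tutte polynomial, which is $\#$P-hard off $H_1\cup H_2\cup\{(-1,-1),(1,1)\}$ by Theorem~\ref{VW}, or a hard counting problem such as counting the subtrees of a graph through a fixed vertex (note that $T(\Gamma(G);x,1)$ is precisely the generating function for such subtrees by number of edges). From the input one builds a connected rooted planar bipartite graph $G$ whose branching greedoid Tutte polynomial retains enough information to force hardness at one point, and then moves that evaluation onto the desired target hyperbola using thickenings and the constructions of Section~\ref{sec:constructions}, keeping the graph connected, planar and bipartite throughout; the degenerate lines are handled by similar ad hoc reductions. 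I expect the interpolation itself to be routine, and the real obstacle to be the usual one in this programme: the management of special cases --- verifying that every hyperbola and every degenerate line genuinely carries a $\#$P-hard point realisable within the class of connected rooted planar bipartite graphs, and ensuring that all the interpolations used stay of polynomially bounded degree.
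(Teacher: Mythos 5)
Your treatment of the tractable cases ($H_1$ via the closed form in $|E|$ and $\rho(E)$, and $(1,1)$ via the Matrix--Tree theorem) is exactly the paper's, and your two-stage hardness strategy --- thickening plus Vandermonde interpolation to spread hardness along each $H_\alpha$, a second (attachment-type) construction in place of the stretch, and one seed point per hyperbola --- is also the paper's architecture. But there is a concrete gap at the seed-point stage, which is where all the content lies: you propose to obtain seeds by ``reducing from an evaluation of the ordinary Tutte polynomial'', yet the rooted (greedoid) Tutte polynomial and the unrooted one are different polynomials, and at a generic point the former does not determine the latter. ``Building a rooted graph whose branching greedoid Tutte polynomial retains enough information'' is precisely the step that needs an idea. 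The paper's bridge is the coincidence $T(G;1,y)=T(G';1,y)$ for a connected rooted graph $G$ and its underlying unrooted graph $G'$ (Proposition~\ref{prop:x=1}), which imports Theorem~\ref{VW} wholesale onto the line $x=1$; hardness is then pushed from $(1,0)$ to every $(a,0)$ with $a\ne 0$ via the attachment identity $T(G\sim S_1;a,0)=a^{\rho(G)}T(G;1,0)$, and the $x$-axis crossing $(1-\alpha,0)$ then serves as the seed on each $H_\alpha$ with $\alpha\ne 0,1$. Without isolating the $x=1$ coincidence your seed step does not go through.

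A second, smaller gap: for the line $y=1$ you take as given that counting subtrees through a fixed vertex is $\#$P-hard, which is not an off-the-shelf result. The paper instead reduces Jerrum's \textsc{Subtrees} problem to $\pi_2[\mathcal{G},H_0^y]$ by rooting $G$ at each of its $n$ vertices and combining via $a_i(G)=\sum_{j}a_i(G_j)/(i+1)$, and it must first upgrade Jerrum's theorem to bipartite planar inputs by a $k$-stretch interpolation (Propositions~\ref{bipartite stretch} and~\ref{prop:jerrum}), since the class is restricted to connected rooted planar bipartite graphs. These two bridges, together with the $y=-1$ line (handled via $G\sim S_k$, with the exceptional values $a\in\{0,\tfrac12,1\}$ patched by a $P_2$-attachment), constitute the substance that your outline defers to ``management of special cases''.
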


In Section~\ref{sec:rooted digraph}, we prove the equivalent result for the Tutte polynomial of a rooted digraph.
\begin{theorem}
\label{maintheoremdigraph}
Evaluating the Tutte polynomial of a root-connected, rooted digraph at any fixed point $(a,b)$ in the rational $xy$-plane is $\#$P-hard apart from when $(a,b)$ equals $(1,1)$, when $(a,b)$ lies on $H_1$, or when $b=0$.

There are polynomial time algorithms to evaluate the Tutte polynomial of a rooted digraph at $(1,1)$, at any point lying on $H_1$ and at any point $(a,0)$.
\end{theorem}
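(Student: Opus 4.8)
My plan is to organize the proof into the tractable cases and the $\#$P-hard cases, in close parallel with the proof of Theorem~\ref{maintheoremrootedgraph} but with the extra exceptional line $b=0$ to accommodate. For the tractable part there are three families. On $H_1$, where $(x-1)(y-1)=1$, each summand $(x-1)^{r(E)-r(A)}(y-1)^{|A|-r(A)}$ equals $(x-1)^{r(E)-|A|}$, so summing over all $A\subseteq E$ gives the monomial $(x-1)^{r(E)-|E|}x^{|E|}$; since $D$ is root-connected, $r(E)=|V(D)|-1$, so this is computed in polynomial time. At $(1,1)$ the polynomial counts the bases of the branching greedoid of $D$, that is, the spanning arborescences rooted at the root, which is a cofactor of the directed Laplacian by the matrix--tree theorem for digraphs. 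On the line $b=0$ I would argue that $T(D;x,0)$ collapses: if $D$ has a greedoid loop (an arc on no feasible set) then $T(D;x,0)=0$, and otherwise a sign-reversing pairing of the subsets of $D$ reduces $T(D;x,0)$ to a monomial in $x$ whose exponent is an efficiently computable parameter of $D$. This last point is precisely where rooted digraphs diverge from rooted graphs.

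For the $\#$P-hard cases the engine is the greedoid thickening and stretch operations together with the Brylawski-type identities for them from Section~\ref{sec:constructions}. Fix $(a,b)$ with $\alpha:=(a-1)(b-1)\notin\{0,1\}$. The $k$-thickenings $D^k$ satisfy $T(D^k;a,b)=c_k\,T(D;a_k,b_k)$ for an explicit constant $c_k$ and points $(a_k,b_k)=(1+\alpha/(b^k-1),\,b^k)$ lying on $H_\alpha$; provided $b\notin\{-1,0,1\}$ these are pairwise distinct, so a polynomial-time algorithm at $(a,b)$ together with Lagrange interpolation yields one for the bounded-degree one-variable polynomial that is the restriction of $T$ to $H_\alpha$, hence at every point of $H_\alpha$. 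It then suffices to exhibit one $\#$P-hard point on each such hyperbola. I would obtain this by a reduction encoding a known hard counting problem --- for instance the Tutte polynomial of a rooted graph along $H_\alpha$ (Theorem~\ref{maintheoremrootedgraph}), or a counting problem on digraphs --- into an evaluation of $T$ at a point of $H_\alpha$, the design of the gadget (which must yield a root-connected rooted digraph) being where the real work lies.

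The degenerate cases are treated individually, as in the classical argument. No argument is needed for $H_1$ or $(1,1)$. On the line $b=1$ (where $\alpha=0$) the thickenings instead give $T(D^k;a,1)$ as a polynomial in $k$ of degree $|V(D)|-1$ whose coefficients, after dividing by powers of $a-1\neq0$, are the numbers of feasible sets of each size; these determine $T(D;x,1)$ for all $x$, so a polynomial-time algorithm at $(a,1)$ would give one at $(2,1)$, the number of feasible sets of the branching greedoid, which I would show is $\#$P-hard. Dually, the stretch operation handles the line $a=1$, reducing to a hard point such as $(1,2)$, the number of root-connected spanning sub-digraphs; the corresponding interpolation fails at $b=0$, consistently with $(1,0)$ being tractable. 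The remaining points with both coordinates in $\{-1,0,1\}$ --- chiefly $(-1,-1)$ and $(0,-1)$, since $(-1,0)$ and $(0,0)$ are tractable --- and those with exactly one coordinate $-1$ (handled by whichever of thickening and stretching is non-degenerate there) need ad hoc reductions combining the two operations.

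The main obstacle is twofold. First, proving $\#$P-hardness of the base evaluations: because the rank function of a branching greedoid is not submodular, the usual matroid reductions do not transfer verbatim, so one must design gadgets that simultaneously encode a hard problem and remain root-connected rooted digraphs. Second, the case analysis for the special lines and points, and in particular confirming that $b=0$ is the only new exceptional line: verifying that thickening and stretching between them cover every non-exceptional $(a,b)$, and that each degenerate reduction fails only on the tractable set, is where most of the technical care will be needed.
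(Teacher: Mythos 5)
Your architecture --- tractable points, $k$-thickening plus interpolation to reduce a whole hyperbola to a single evaluation, one hard seed point per hyperbola, then ad hoc treatment of the degenerate lines --- is exactly the paper's. But the content of the theorem lives precisely in the part you defer: you never exhibit a single verified $\#$P-hard evaluation, and without a seed the interpolation machinery proves nothing. In the paper all hardness for digraphs flows from two concrete sources. First, Provan and Ball's $\#$P-completeness of computing the probability $g(D;p)$ that a random subdigraph of a root-connected $D$ remains root-connected~\cite{ProvanBall}; since $g(D;p)=p^{|E|-\rho(D)}(1-p)^{\rho(D)}T(D;1,1/p)$, this makes $(1,b)$ hard for every rational $b>1$, and in particular $(1,2)$ (your ``number of root-connected spanning sub-digraphs''). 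Second, for the line $y=1$, replacing every edge of a rooted graph $G$ by a pair of oppositely directed edges gives a root-connected $D$ with $T(D;x,1)=T(G;x,1)$ (feasible sets of $G$ correspond bijectively to feasible sets of $D$ by orienting away from the root), importing Theorem~\ref{maintheoremrootedgraph} and ultimately Jerrum's subtree-counting result. Every other hard point is then reached from these: $H_\alpha$ ($\alpha\notin\{0,1\}$) via its crossing point $(1+\alpha,2)$ with the line $y=2$, which is pulled back to $(1,2)$ by $\overrightarrow{S_k}$-attachments (with separate patches at $a=0$ and $a=\tfrac23$ using $\overrightarrow{P_2}$); the point $(1,-1)$ via a purpose-built ``$k$-digon-stretch'' satisfying $T(D_2;1,-1)=3^{|E(D)|-\rho(D)}T(D;1,\tfrac13)$ (Theorem~\ref{digon-stretch}); and $b=-1$ generally via $\overrightarrow{S_k}$-attachments back to $(1,-1)$. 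Your proposal names none of these reductions, and the ordinary stretch you invoke ``dually'' for the line $a=1$ does not exist in a usable form for directed branching greedoids --- that is exactly why the paper has to invent the digon-stretch. So the gap is the entire hardness core, not a routine verification.

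There is also a genuine error in your treatment of the tractable line $b=0$. The correct dichotomy (due to Gordon and McMahon) is acyclic versus not: $T(D;x,0)=x^{s}$ with $s$ the number of sinks if $D$ is acyclic, and $T(D;x,0)=0$ if $D$ contains a directed cycle. Your proposed dichotomy ``greedoid loop $\Rightarrow 0$, otherwise a monomial'' fails: take $r\to a$, $r\to b$, $a\to b$, $b\to a$. This digraph is root-connected, every arc lies in a feasible set (so there are no greedoid loops), yet it contains the directed cycle $a\to b\to a$ and a direct computation gives $T(D;x,0)=0$, which no sign-reversing pairing can reduce to a nonzero monomial. The conclusion (polynomial-time computability on $b=0$) survives, but the argument as stated does not.
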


We then determine the complexity of evaluating the Tutte polynomial of a binary greedoid.
\begin{theorem}
\label{maintheorembinarygreedoid}
Evaluating the Tutte polynomial of a binary greedoid at any fixed point $(a,b)$ in the rational $xy$-plane is $\#$P-hard apart from when $(a,b)$ lies on $H_1$.

There is a polynomial time algorithm to evaluate the Tutte polynomial of a binary greedoid at any point lying on $H_1$.
\end{theorem}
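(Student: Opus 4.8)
The plan is to follow the route used for the ordinary Tutte polynomial in Theorem~\ref{VW}, carried over to binary greedoids by means of the greedoid constructions of Section~\ref{sec:constructions}. The tractable half is immediate: on $H_1$ the defining sum telescopes. Writing the Tutte polynomial of a greedoid $G$ with ground set $E$ and rank function $r$ as $\sum_{A\subseteq E}(x-1)^{r(E)-r(A)}(y-1)^{|A|-r(A)}$ and substituting $(y-1)=(x-1)^{-1}$, every term collapses to $(x-1)^{r(E)-|A|}$, so grouping the subsets $A$ by their size gives
\[
T(G;x,y)=(x-1)^{r(E)}\sum_{j=0}^{|E|}\binom{|E|}{j}(x-1)^{-j}=x^{|E|}(x-1)^{r(E)-|E|}.
\]
For a binary greedoid $r(E)$ is just the rank over $\mathrm{GF}(2)$ of the associated matrix, computable by Gaussian elimination, so this expression can be evaluated in polynomial time; this proves the second assertion.

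For hardness, fix a rational point $(a,b)\notin H_1$ and set $\alpha=(a-1)(b-1)$. The first step is an interpolation reduction: using the thickening analogue for greedoids from Section~\ref{sec:constructions} --- and, where thickening alone yields too few points, a further construction from that section --- we would express the Tutte polynomial of the modified binary greedoid at $(a,b)$ as a known prefactor times the Tutte polynomial of the original greedoid evaluated at a point of $H_\alpha$ (or of $H_0^x$ or $H_0^y$ when $\alpha=0$) that moves as the construction parameter grows. Unless $(a,b)$ is one of the finitely many points with both coordinates in $\{0,\pm1\}$, these points are distinct and plentiful, so Lagrange interpolation on the degree-bounded restriction of the Tutte polynomial to that curve shows that a polynomial-time algorithm at $(a,b)$ would give one at every rational point of the curve. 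It therefore suffices to exhibit one $\#$P-hard point on each curve $H_\alpha$ with $\alpha\ne1$, on $H_0^x$ and on $H_0^y$, together with direct reductions for the finitely many remaining degenerate points.

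To produce a hard point on a typical $H_\alpha$ we would reduce from the ordinary Tutte polynomial: since graphic matroids are binary and the greedoid Tutte polynomial of a matroid coincides, in these coordinates, with the matroid Tutte polynomial, a suitable construction recovers the Tutte polynomial of an arbitrary planar bipartite graph from that of a binary greedoid, and Theorem~\ref{VW} then supplies the required hardness on all but finitely many hyperbolae. The exceptional case is the one line along which this reduction collapses --- because the corresponding graph evaluation, essentially counting the bases of a graphic matroid, is itself tractable --- and there we would instead argue that the binary greedoid Tutte polynomial computes the number of bases of a general binary matroid, which is $\#$P-hard by Vertigan's theorem, reproved in the appendix; the same observation handles the point $(1,1)$, which is hard here even though it is tractable in the graphic setting. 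We expect this line, and the case analysis around it, to be the main obstacle: squeezing enough distinct interpolation points out of the thickening construction on every non-degenerate curve and disposing of the fully degenerate points by direct reduction is the sort of bookkeeping the introduction warns about, but the genuinely new difficulty is that the innocuous-looking base-counting evaluation becomes intractable for binary matroids, so the chromatic-polynomial-style anchors that work for graphs fail on that line and Vertigan's unpublished result becomes indispensable.
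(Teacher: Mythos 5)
The tractable half of your argument is fine and matches the paper. The fatal problem is in your hardness mechanism. You propose to import hardness from Theorem~\ref{VW} at essentially arbitrary points by observing that graphic matroids are binary and that ``the greedoid Tutte polynomial of a matroid coincides with the matroid Tutte polynomial.'' That identity holds for a matroid \emph{regarded as a greedoid} (feasible sets $=$ independent sets), but the objects in the class $\mathcal B$ are binary greedoids $\Gamma(M)$ of a $(0,1)$-matrix $M$, whose feasible sets are the column sets $A$ with $M_{[|A|],A}$ nonsingular. This Gaussian-elimination greedoid is \emph{not} the column matroid of $M$: its rank function depends on the row order and generally disagrees with the matroid rank. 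Concretely, $\Gamma(I_2)$ has feasible sets $\{\emptyset,\{1\},\{1,2\}\}$ and Tutte polynomial $x^2y-2xy+x+y$, whereas the matroid $U_{2,2}$ has Tutte polynomial $x^2$; and a graphic matroid such as $U_{2,3}$ is not even isomorphic to any binary greedoid, since a rank-$2$ binary greedoid cannot have all three pairs feasible. Lemma~\ref{binary greedoid to matroid} only guarantees that the \emph{bases} of $\Gamma(M)$ coincide with those of a binary matroid, so the two Tutte polynomials agree only where the polynomial is determined by the spanning sets, i.e.\ along the line $x=1$ (and in particular at $(1,1)$). Consequently your step ``Theorem~\ref{VW} supplies the required hardness on all but finitely many hyperbolae'' does not follow: the pointwise transfer from graphs is simply unavailable off the line $x=1$, and with it goes your supply of anchor points.

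The paper's proof repairs exactly this. It uses the graphic-matroid reduction \emph{only} along $x=1$ (where spanning sets, hence bases, determine the evaluation) to make every $(1,b)$ with $b\neq 1$ hard; it uses Theorem~\ref{thm:Vertigan} for $(1,1)$ and, via thickening interpolation along $H_0^y$, for the whole line $y=1$; it then drags hardness from $(1,0)$ to every $(a,0)$ with $a\neq 0$, and from $(1,-1)$ to every $(a,-1)$ with $a\neq\frac12$, using the full-rank attachment $\Gamma\approx\Gamma(I_1)$ and Theorem~\ref{thm:fullrankattach}; finally each hyperbola $H_\alpha$ with $\alpha\notin\{0,1\}$ is anchored at its intersection $(1-\alpha,0)$ with the $x$-axis and swept out by the thickening interpolation of Proposition~\ref{binary greedoid reduction 1}. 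Your instinct that Vertigan's base-counting theorem is the genuinely new ingredient, and that interpolation plus a second construction handles the degenerate values of $b$, is correct; but as written your argument establishes hardness at no point off $x=1$, so the anchoring step --- the heart of the proof --- is missing.
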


One special case of this theorem depends on a special case of an unpublished result of Vertigan, who proved that the problem of counting bases of a binary matroid is $\#$P-complete. For completeness, in Appendix~\ref{sec:appendix}, we provide a proof of this result for all fields.

\section{Preliminaries}
\label{Rooted Graphs, Rooted Digraphs and Greedoids}

\subsection{Rooted graphs and digraphs}All our graphs are allowed to have loops and multiple edges.
A \emph{rooted graph} is a graph with a distinguished vertex called the \emph{root}. Most of the graphs we work with will be rooted but occasionally we will work with a graph without a root. For complete clarity, we will sometimes refer to such graphs as \emph{unrooted graphs}.
We denote a rooted graph $G$ with vertex set $V(G)$, edge set $E(G)$ and root $r(G)$ by a triple
$(V(G),E(G),r(G))$. We omit the arguments when there is no fear of ambiguity.
Many of the standard definitions for graphs can be applied to rooted graphs in the natural way. Two rooted graphs $(V,E,r)$ and $(V',E',r')$ are \emph{isomorphic} if the unrooted graphs $(V,E)$ and $(V',E')$ are isomorphic via an isomorphism mapping $r$ to $r'$.
For a subset $A$ of $E$, the \emph{rooted spanning subgraph} $G|A$ is formed from $G$ by deleting all the edges in $E-A$ (and keeping all the vertices). The \emph{root component} of $G$ is the connected component of $G$ containing the root.
A set $A$ of edges of $G$ is \emph{feasible} if the root component of $G|A$ is a tree and contains every edge of $A$. We define the \emph{rank} $\rho_G(A)$ of $A$ to be \[\rho_G(A) =\max\{|A'|: A'\subseteq A, A \text{ is feasible}\}.\] We omit the subscript when the context is clear. We let $\rho(G) = \rho(E)$. Observe that a set $A$ of edges is feasible if and only if $\rho(A) = |A|$. A feasible set is a \emph{basis} if $\rho(A)=\rho(G)$.
So $A$ is a basis of $G$ if and only if it is the edge set of a spanning tree of the root component of $G$.

A \emph{rooted digraph} is a digraph with a distinguished vertex called the \emph{root}.
We denote a rooted digraph $D$ with vertex set $V(D)$, edge set $E(D)$ and root $r(D)$ by a triple
$(V(D),E(D),r(D))$.
Once again we omit the arguments when there is no chance of ambiguity. Two rooted digraphs $(V,E,r)$ and $(V',E',r')$ are \emph{isomorphic} if the unrooted digraphs $(V,E)$ and $(V',E')$ are isomorphic via an isomorphism mapping $r$ to $r'$.
We say that the \emph{underlying rooted graph} of a rooted digraph is the rooted graph we get when we remove all the directions on the edges.
For a subset $A$ of $E$, the \emph{rooted spanning subdigraph} $D|A$ is formed from $D$ by deleting all the edges in $E-A$.
The \emph{root component} of $D$ is formed by deleting every vertex $v$ to which there is no directed path from $r$ in $D$, together with its incident edges.
The rooted digraph is \emph{root-connected} if there is a directed path from the root to every other vertex.
The rooted digraph $D$ is an \emph{arborescence rooted at $r$} if $D$ is root-connected and its underlying rooted graph is a tree.
Observe that a set $A$ of edges of $D$ is \emph{feasible} if and only if the root
component of $D|A$ is an arborescence rooted at $r$ and contains every edge of $A$. The \emph{rank} $\rho_D(A)$ of $A$ is defined by
\[\rho_D(A) = \max\{|A'|: A' \subseteq A, D|A' \text{ is feasible}\}.\] We can omit the subscript when the context is clear. We let $\rho(D) = \rho(E)$.
A set $A$ of edges is feasible if and only if $\rho(A) = |A|$. A feasible set is a \emph{basis} if $\rho(A) = \rho(D)$.
So $A$ is a basis of $D$ if and only if it is the edge set of an arborescence rooted at $r$ that includes every vertex of the root component of $D$.

\subsection{Greedoids}

Greedoids are generalizations of matroids, first introduced by Korte and Lov\'{a}sz in 1981 in~\cite{KorteLovasz}. The aim was to generalize the characterization of matroids as hereditary set systems on which the greedy algorithm is guaranteed to determine the optimal member of the set system, according to an arbitrary weighting. Most of the information about greedoids which we summarise below can be found in~\cite{bjorner+zieglerzbMATH00067326} or~\cite{kortebookzbMATH04212062}.

\begin{definition}
A \emph{greedoid} $\Gamma$ is an ordered pair $(E,\mathcal{F})$ consisting of a finite set $E$ and a non-empty collection $\mathcal{F}$ of subsets of $E$ satisfying the following axioms:
\begin{enumerate}[label=(G\arabic*),align=left,leftmargin=*]
\item $\emptyset \in \mathcal{F}$.
\item \label{ax:G2} For all $F$ and $F'$ in $\mathcal{F}$ with $|F'|<|F|$ there exists some $x \in F-F'$ such that $F' \cup x \in \mathcal{F}$.
\end{enumerate}
\end{definition}
The set $E$ is \emph{ground set} of $\Gamma$ and the members of $\mathcal{F}$ are the \emph{feasible sets} of $\Gamma$. The axioms are the first and third of the usual axioms specifying a matroid in terms of its independent sets, so clearly every matroid is a greedoid, but a greedoid does not necessarily satisfy the hereditary property satisfied by the independent sets of a matroid requiring that the
collection of independent sets is closed under taking subsets.
The \emph{rank} $\rho_{\Gamma}(A)$  of a subset $A$ of $E$ is given by \[\rho_{\Gamma}(A) = \max\{|A'|: A' \subseteq A, A' \in \mathcal{F}\}\] and we let $\rho(\Gamma)= \rho_{\Gamma}(E)$.
We omit the subscript when the context is clear.
Notice that a set $A$ is feasible if and only if $\rho(A) = |A|$. A feasible set is a \emph{basis} if $\rho(A) = \rho(\Gamma)$. We denote the collection of bases of $\Gamma$ by $\mathcal B(\Gamma)$.
Axiom~\ref{ax:G2} implies that every basis has the same cardinality. Note that the rank function determines $\Gamma$ but the collection of bases does not. For example, suppose that a greedoid has ground set $\{1,2\}$ and unique basis $\{1,2\}$. Then its collection of feasible sets could either be $\{\emptyset, \{1\}, \{1,2\}\}$, $\{\emptyset, \{2\}, \{1,2\}\}$ or $\{\emptyset, \{1\}, \{2\}, \{1,2\}\}$.

The rank function of a greedoid can be characterized in a similar way to the rank function of a matroid~\cite{KorteLovasz3}.
\begin{proposition}
The rank function $\rho$ of a greedoid with ground set $E$ takes integer values and satisfies each of the following.
\begin{enumerate}[label=(GR\arabic*),align=left,leftmargin=*]
\item For every subset $A$ of $E$, $0\leq \rho(A) \leq |A|$;
\item \label{ax:GR2} For all subsets $A$ and $B$ of $E$ with $A\subseteq B$, $\rho(A)\leq \rho(B)$;
\item For every subset $A$ of $E$, and elements $e$ and $f$, if $\rho(A)=\rho(A\cup e)=\rho(A \cup f)$, then $\rho(A)=\rho(A\cup e\cup f)$.
\end{enumerate}
Moreover if $E$ is a finite set and $\rho$ is a function from the subsets of $E$ to the integers, then $\rho$ is the rank function of a greedoid with ground set $E$ if and only if $\rho$ satisfies conditions (GR1)--(GR3) above.
\end{proposition}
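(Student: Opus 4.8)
The plan is to prove the two implications separately: first, that the rank function of any greedoid satisfies (GR1)--(GR3); second, that any integer-valued function on the subsets of a finite set $E$ satisfying (GR1)--(GR3) is the rank function of a greedoid on $E$. This is the characterisation due to Korte and Lov\'asz~\cite{KorteLovasz3}, which I would reconstruct as follows.

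For the forward direction, integrality and (GR1) are immediate, since $\rho(A)$ is the largest cardinality of a feasible subset of $A$, which lies between $0$ (the empty set is feasible, by (G1)) and $|A|$; monotonicity (GR2) is equally direct because a feasible subset of $A$ is a feasible subset of any superset of $A$. The only part using the exchange axiom \ref{ax:G2} is (GR3). Assume $\rho(A)=\rho(A\cup e)=\rho(A\cup f)$; we may take $e\neq f$ and $e,f\notin A$, as otherwise there is nothing to prove. Let $F_A$ be a maximum feasible subset of $A$ and $F$ a maximum feasible subset of $A\cup e\cup f$; by (GR2) it is enough to exclude $|F|>|F_A|$. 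If $|F|>|F_A|$, then \ref{ax:G2} produces $x\in F-F_A$ with $F_A\cup x$ feasible, and since $x\in A\cup e\cup f$ this contradicts either the maximality of $F_A$ in $A$ (if $x\in A$) or one of the equalities $\rho(A)=\rho(A\cup e)$, $\rho(A)=\rho(A\cup f)$ (if $x\in\{e,f\}$). Hence $\rho(A\cup e\cup f)=\rho(A)$.

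For the converse I would set $\mathcal F=\{A\subseteq E:\rho(A)=|A|\}$ and check that $(E,\mathcal F)$ is a greedoid whose rank function is $\rho$. Axiom (G1) holds because (GR1) forces $\rho(\emptyset)=0$. The key tool for the rest is a ``local-to-global'' consequence of (GR3): \emph{if $\rho(A\cup x)=\rho(A)$ for every $x$ in a set $S$ disjoint from $A$, then $\rho(A\cup S)=\rho(A)$.} This is proved by induction on $|S|$; in the inductive step one picks distinct $e,f\in S$, uses the inductive hypothesis to evaluate $\rho$ at $A\cup(S\setminus\{e,f\})$, at $A\cup(S\setminus\{f\})$ and at $A\cup(S\setminus\{e\})$, and then invokes (GR3). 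Granting this, for \ref{ax:G2} take $F,F'\in\mathcal F$ with $|F'|<|F|$. For each $x\in F-F'$ we have $\rho(F')\le\rho(F'\cup x)\le|F'|+1$, so if $F'\cup x\notin\mathcal F$ then $\rho(F'\cup x)=\rho(F')$; if this held for every such $x$, the lemma would give $\rho(F'\cup F)=|F'|$, contradicting $\rho(F'\cup F)\ge\rho(F)=|F|$ from (GR2). So some $x\in F-F'$ has $F'\cup x\in\mathcal F$, and \ref{ax:G2} holds. Finally, writing $\rho'$ for the rank function of $(E,\mathcal F)$, the inequality $\rho'\le\rho$ is again (GR2), while $\rho'(A)\ge\rho(A)$ follows by greedily enlarging a feasible subset of $A$: if $A_j\subseteq A$ is feasible with $|A_j|=j<\rho(A)$ and no element of $A-A_j$ extends it to a feasible set, then $\rho(A_j\cup x)=\rho(A_j)$ for all $x\in A-A_j$, and the lemma yields $\rho(A)=\rho(A_j)=j$, a contradiction.

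I expect the inductive lemma deriving $\rho(A\cup S)=\rho(A)$ from the three-element axiom (GR3) to be the step carrying the real content, since both the verification of \ref{ax:G2} and the identification of the rank function reduce to it; everything else is routine bookkeeping with (GR1) and (GR2). Uniqueness of the greedoid, alluded to in the remark preceding the proposition, is then automatic, since any greedoid with rank function $\rho$ must have feasible sets exactly $\{A:\rho(A)=|A|\}$.
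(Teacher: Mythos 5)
Your proof is correct. Note, though, that the paper does not actually prove this proposition: it is quoted as a known characterisation due to Korte and Lov\'asz, with a citation in place of an argument, so there is no in-text proof to compare against. Your reconstruction is sound on all points: the forward direction correctly isolates \ref{ax:G2} as the only axiom needed for (GR3) (via a maximum feasible subset $F_A$ of $A$ and an augmentation argument), and the converse correctly takes $\mathcal F=\{A:\rho(A)=|A|\}$ and reduces everything to the ``local-to-global'' consequence of (GR3). That inductive lemma is precisely Lemma~\ref{lem:rankuseful} of the paper, which the authors likewise state without proof (``easily proved using induction on $|B|$''); your inductive step --- applying the induction hypothesis to $S\setminus\{e\}$, $S\setminus\{f\}$ and $S\setminus\{e,f\}$ and then invoking (GR3) at the set $A\cup(S\setminus\{e,f\})$ --- is the intended argument and is valid. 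The two remaining uses of the lemma (verifying \ref{ax:G2} for $\mathcal F$, and the greedy augmentation showing $\rho'(A)\ge\rho(A)$) are also handled correctly, including the small but necessary observation that $\rho(F')\le\rho(F'\cup x)\le|F'|+1$ forces the dichotomy between ``$F'\cup x$ feasible'' and ``$\rho(F'\cup x)=\rho(F')$''. Your closing remark on uniqueness matches the paper's observation that a greedoid's feasible sets are exactly the sets with $\rho(A)=|A|$. Nothing is missing.
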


The following lemma is easily proved using induction on $|B|$ and will be useful later.
\begin{lemma}\label{lem:rankuseful}
Let $(E,\rho)$ be a greedoid specified by its rank function and let $A$ and $B$ be subsets of $E$ such that for all $b\in B$, $\rho (A\cup b)=\rho(A)$. Then $\rho(A\cup B)=\rho(A)$.
\end{lemma}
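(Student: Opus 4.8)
The plan is to prove this by induction on $|B|$, using only the axioms (GR1)--(GR3) for the rank function of a greedoid. The base cases $|B|=0$ and $|B|=1$ are immediate: if $B=\emptyset$ then $A\cup B=A$, and if $B=\{b\}$ then the statement $\rho(A\cup B)=\rho(A)$ is exactly the hypothesis $\rho(A\cup b)=\rho(A)$.

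For the inductive step, suppose $|B|\geq 2$ and the result holds for all strictly smaller sets. Fix some $b\in B$ and set $A'=A\cup b$ and $B'=B\setminus\{b\}$, so that $A'\cup B'=A\cup B$ and $|B'|<|B|$. To apply the inductive hypothesis to the pair $(A',B')$ I need to verify that $\rho(A'\cup b')=\rho(A')$ for every $b'\in B'$. This is where axiom (GR3) is used: by hypothesis $\rho(A)=\rho(A\cup b)=\rho(A\cup b')$, so (GR3) gives $\rho(A)=\rho(A\cup b\cup b')$, that is, $\rho(A'\cup b')=\rho(A)=\rho(A')$. Hence $(A',B')$ satisfies the hypotheses of the lemma, the inductive hypothesis yields $\rho(A'\cup B')=\rho(A')$, and combining this with $\rho(A')=\rho(A)$ and $A'\cup B'=A\cup B$ completes the induction.

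The only subtlety --- and the step I expect to be the crux --- is that the hypothesis as stated only controls one-element extensions of $A$ itself, whereas the induction forces us to work with the enlarged set $A\cup b$; axiom (GR3) is precisely the tool that propagates the ``rank does not increase'' condition from $A$ to $A\cup b$. No other properties of greedoids are needed, which is why the proof is short once this observation is made.
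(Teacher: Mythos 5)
Your proof is correct, and it follows exactly the route the paper indicates (the paper only remarks that the lemma ``is easily proved using induction on $|B|$'' and omits the details). Your use of axiom (GR3) to propagate the hypothesis from $A$ to $A\cup b$ is precisely the step that makes the induction go through.
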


Two greedoids $\Gamma_1 = (E_1, \mathcal{F}_1)$ and $\Gamma_2 = (E_2, \mathcal{F}_2)$ are \emph{isomorphic}, denoted by $\Gamma_1 \cong \Gamma_2$, if there exists a bijection $f: E_1 \rightarrow E_2$ that preserves the feasible sets.

The following two examples of greedoids were introduced in~\cite{zbMATH03871387}.
Let $G$ be a rooted graph. Take $\Gamma=(E,\mathcal{F})$ so that $E=E(G)$ and a subset $A$ of $E$ is in $\mathcal{F}$ if and only if the root component of $G|A$ is a tree containing every edge of $A$. Then $\Gamma$ is a greedoid.
Any greedoid which is isomorphic to a greedoid arising from a rooted graph in this way is called a \emph{branching greedoid}. The branching greedoid of a rooted graph $G$ is denoted by $\Gamma(G)$.

Similarly suppose we have a rooted digraph $D$ and take $\Gamma=(E,\mathcal{F})$ so that $E={E}(D)$ and a subset $A$ of $E$ is in $\mathcal{F}$ if and only if the root component of $D|A$ is an arborescence rooted at $r$ and contains every edge of $A$. Then $\Gamma$ is a greedoid.
Any greedoid which is isomorphic to a greedoid arising from a rooted digraph in this way is called a \emph{directed branching greedoid}. The directed branching greedoid of a rooted digraph $D$ is denoted by $\Gamma(D)$. (There should be no ambiguity with the overload of notation for a branching greedoid and a directed branching greedoid.)

Notice that for both rooted graphs and digraphs, the concepts of feasible set, basis and rank are compatible with their definitions for the associated branching greedoid or directed branching greedoid in the sense that a set $A$ of edges is feasible in a rooted graph $G$ if and only if it is feasible in $\Gamma(G)$, and similarly for the other concepts.

We now define the class of \emph{binary greedoids}. These are a special case of a much broader class, the \emph{Gaussian elimination greedoids}, introduced by Goecke in~\cite{Goecke}, motivated by the Gaussian elimination algorithm.
Let $M$ be an $m\times n$ binary matrix.
It is useful to think of the rows and columns of $M$ as being labelled by the elements of $[m]$ and $[n]$ respectively, where $[n]=\{1,\ldots,n\}$. If $X$ is a subset of $[m]$ and $Y$ is a subset of $[n]$, then $M_{X,Y}$ denotes the matrix obtained from $M$ by deleting all the rows except those with labels in $X$ and all the columns except those with labels in $Y$. Take $\Gamma=([n],\mathcal F)$, so that
\[\mathcal{F} = \{A \subseteq [n]: \text{ the submatrix $M_{[|A|],A}$ is non-singular}\}.\]
By convention, the empty matrix is considered to be non-singular.
Then $\Gamma$ is a greedoid. Any greedoid which is isomorphic to a greedoid arising from a binary matrix in this way is called a \emph{binary greedoid}. The binary greedoid of a binary matrix $M$ is denoted by $\Gamma(M)$.

\begin{example}
Let \[M = \bordermatrix{~ & 1 & 2 & 3 & 4 \cr
                        ~ & 1 & 0 & 0 & 1 \cr
                        ~ & 1 & 0 & 1 & 0 \cr
                        ~ & 0 & 1 & 1 & 1 \cr}.\]
The binary greedoid $\Gamma(M)$ has ground set $\{1,2,3,4\}$ and feasible sets \[\{\emptyset, \{1\}, \{4\}, \{1,3\}, \{1,4\}, \{3,4\}, \{1,2,3\}, \{1,2,4\},\{2,3,4\}\}.\]
\end{example}

The following lemma is clear.
\begin{lemma}
Let $E=[n]$, let $M$ be an $m \times n$ binary matrix with columns labelled by $E$ and let $M'$ be obtained from $M$ by adding row $i$ to row $j$, where $i < j$. Then $\Gamma(M') \cong \Gamma(M)$.
\end{lemma}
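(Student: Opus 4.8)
The plan is to show directly that the two greedoids have the same feasible sets, i.e.\ that for every $A\subseteq E$, the submatrix $M_{[|A|],A}$ is non-singular if and only if $M'_{[|A|],A}$ is non-singular. Write $\mathbf r_1,\dots,\mathbf r_m$ for the rows of $M$, so that the rows of $M'$ are $\mathbf r_1,\dots,\mathbf r_{j-1},\mathbf r_j+\mathbf r_i,\mathbf r_{j+1},\dots,\mathbf r_m$, where the arithmetic is over $\mathrm{GF}(2)$ and $i<j$. Fix $A\subseteq E$ and set $k=|A|$. Restricting to the columns indexed by $A$, the matrices $M_{[k],A}$ and $M'_{[k],A}$ consist of the first $k$ rows of $M$ and $M'$ respectively, again restricted to those columns.

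The key observation is a case split on the position of the modified row relative to the cutoff $k$. If $j>k$, then rows $1,\dots,k$ of $M$ and of $M'$ coincide, since row $j$ is the only row that changes; hence $M_{[k],A}=M'_{[k],A}$ and there is nothing to prove. If $j\le k$, then since $i<j\le k$ the rows $\mathbf r_i$ and $\mathbf r_j$ are both among the first $k$ rows, so $M'_{[k],A}$ is obtained from $M_{[k],A}$ by a single elementary row operation of the form ``add one row to another'', which is invertible over any field. An invertible row operation does not change whether a square matrix is singular, so $M_{[k],A}$ is non-singular if and only if $M'_{[k],A}$ is non-singular. This establishes $\mathcal F(M)=\mathcal F(M')$, and the identity map on $E$ is then the required greedoid isomorphism.

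There is essentially no obstacle here; the only point requiring a moment's care is the bookkeeping in the case $j\le k$, namely checking that both the source row $i$ and the target row $j$ of the elementary operation survive the truncation to the first $k$ rows — and this is exactly where the hypothesis $i<j$ is used, guaranteeing $i\le k$ whenever $j\le k$. (If instead one added a lower row to a higher row, truncation could leave the target row but delete the source row, and the truncated matrices need not have the same rank.) With that observation in place the lemma follows immediately.
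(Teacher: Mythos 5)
Your proof is correct; the paper states this lemma without proof (``The following lemma is clear''), and your argument supplies exactly the intended verification. The case split on whether $j\le|A|$, and the observation that $i<j$ guarantees the source row survives the truncation to the first $|A|$ rows, is precisely the point that makes the statement true.
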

A consequence of this lemma is that if $\Gamma$ is a binary greedoid, then there is a binary matrix $M$ with linearly independent rows so that $\Gamma=\Gamma(M)$. With this in mind we easily obtain the following result which will be useful later.

\begin{lemma}
\label{binary greedoid to matroid}
Let $\Gamma$ be a binary greedoid. Then there is a binary matroid $M$ so that $\mathcal B(M)=\mathcal B(\Gamma)$.
\end{lemma}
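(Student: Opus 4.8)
The plan is to start, by the remark immediately preceding the statement, with a binary matrix $M$ whose rows are linearly independent and with $\Gamma=\Gamma(M)$. Say $M$ is $m\times n$, so $\rho(\Gamma)=m$ and the column set is $E=[n]$. The goal is to exhibit a binary matroid on $E$ whose bases are exactly $\mathcal B(\Gamma)$. The natural candidate is the column matroid $M[M]$ of $M$ over $\mathrm{GF}(2)$, i.e.\ the matroid in which a subset $A\subseteq E$ is independent precisely when the columns indexed by $A$ are linearly independent. Since $M$ has rank $m$, the bases of $M[M]$ are exactly the $m$-subsets $A$ of $E$ for which the square matrix $M_{[m],A}$ is non-singular. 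So I must show that, for an $m$-subset $A$,
\[
M_{[m],A}\text{ non-singular}\quad\Longleftrightarrow\quad A\in\mathcal B(\Gamma),
\]
and the right-hand side, by definition of the binary greedoid, says that $M_{[|A'|],A'}$ is non-singular for \emph{some} ordering witnessing $A\in\mathcal F$, together with $|A|=\rho(\Gamma)=m$; equivalently $A$ is feasible of full size.

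The key step is the equivalence between "the $m$ columns indexed by $A$ are linearly independent'' and "$A$ is a basis of the greedoid''. One direction is immediate: if $A\in\mathcal B(\Gamma)$ then by the greedoid axioms there is an ordering $a_1,\dots,a_m$ of $A$ with each initial segment feasible, and in particular $M_{[m],A}$ is non-singular, hence the columns are independent. For the converse, suppose the columns indexed by $A=\{a_1,\dots,a_m\}$ are linearly independent over $\mathrm{GF}(2)$; then $M_{[m],A}$ is non-singular. I would then argue that one can order the elements of $A$ so that every initial segment is feasible. This is exactly the statement that in the matrix restricted to columns $A$, one can pick pivots down the rows $1,2,\dots,m$ in some column order — i.e.\ perform Gaussian elimination choosing, at stage $k$, a not-yet-used column with a nonzero entry in row $k$ after clearing rows $1,\dots,k-1$. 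Since $M_{[m],A}$ is invertible, such a pivot always exists; the column chosen at stage $k$, together with those chosen earlier, forms a set $A'$ with $M_{[k],A'}$ non-singular, hence feasible in $\Gamma$. This produces the required ordering and shows $A\in\mathcal F$, and since $|A|=m=\rho(\Gamma)$, $A\in\mathcal B(\Gamma)$. (Alternatively one can invoke the general fact that the bases of a Gaussian elimination greedoid on a matrix of full row rank coincide with the bases of the associated column matroid, but spelling out the pivoting argument keeps the proof self-contained.)

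The main obstacle, such as it is, is purely bookkeeping: matching the "prefix'' flavour of the greedoid definition (which insists the rows used are exactly $[|A'|]$, in order) against the "order-free'' notion of linear independence of columns in the matroid. The Gaussian-elimination/pivoting argument above is what bridges this; once it is in place the proof is short. I should also note at the outset that $M[M]$ is by construction a binary matroid (it is $\mathrm{GF}(2)$-representable by $M$ itself), so the conclusion "there is a binary matroid $M$ with $\mathcal B(M)=\mathcal B(\Gamma)$'' follows. A minor point to handle cleanly is the degenerate case $m=0$ (so $\Gamma$ has only the empty feasible set), where both sides reduce to the statement that $\emptyset$ is the unique basis, consistent with the empty-matrix convention.
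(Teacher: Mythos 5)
Your proposal is correct and matches the approach the paper intends (the paper omits the proof, but the preceding remark about passing to a representing matrix with linearly independent rows points exactly to taking the column matroid of that matrix over $\mathrm{GF}(2)$). Note, though, that your Gaussian-elimination pivoting step is superfluous: for a set $A$ with $|A|=m=\rho(\Gamma)$, feasibility is \emph{by definition} the non-singularity of $M_{[|A|],A}=M_{[m],A}$, which is already identical to the condition that $A$ is a basis of the column matroid, so the two collections of bases coincide with no ordering argument needed.
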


In contrast with the situation in matroids, where every graphic matroid is binary, it is not the case that every branching greedoid is binary. For example, take $G$ to be the star with four vertices in which the central vertex is the root. Then $\Gamma(G)$ is not binary. The same example but with the edges directed away from the root demonstrates that not every directed branching greedoid is binary.

An element of a greedoid is a \emph{loop} if it does not belong to any feasible set. So if $G$ is a rooted graph then an edge $e$ is a loop of $\Gamma(G)$ if it does not lie on any path from the root and if $G$ is connected then it is just a loop in the normal graph-theoretic sense.
Similarly if $D$ is a directed rooted graph then an edge $e$ is a loop of $\Gamma(D)$ if it does not lie on any directed path from the root.
As the concepts of loops in greedoids and in rooted graphs and digraphs do not completely coincide, we use the term \emph{greedoid loop} whenever there is potential for confusion.

Let $\Gamma$ be a greedoid with ground set $E$ and rank function $\rho$. Elements $e$ and $f$ of $E$ are said to be \emph{parallel} in $\Gamma$ if for all subsets $A$ of $E$,
\[\rho(A \cup e) = \rho(A \cup f) = \rho(A \cup e \cup f).\]
As far as we are aware, the following elementary lemma does not seem to have been stated before.
\begin{lemma}
Let $\Gamma$ be a greedoid. Define a relation $\bowtie$ on the ground set of $\Gamma$ by $e\bowtie f$ if $e$ and $f$ are parallel in $\Gamma$. Then $\bowtie$ is an equivalence relation and if $\Gamma$ has at least one loop, then one of the equivalence classes of $\bowtie$ comprises the set of loops.
\end{lemma}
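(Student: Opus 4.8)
The plan is to prove the two assertions separately: first that $\bowtie$ is an equivalence relation, and then that the set of loops forms a single $\bowtie$-class whenever there is at least one loop. Reflexivity of $\bowtie$ is immediate from axiom~(GR1) together with the observation that $\rho(A\cup e)=\rho(A\cup e\cup e)$ trivially, so $e\bowtie e$. Symmetry is built into the defining condition, which is symmetric in $e$ and $f$. The only real content is transitivity, so that is the first main step.

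For transitivity, suppose $e\bowtie f$ and $f\bowtie g$; I want to show $e\bowtie g$, i.e.\ that for every $A\subseteq E$ we have $\rho(A\cup e)=\rho(A\cup g)=\rho(A\cup e\cup g)$. Fix such an $A$. First I would nail down that parallel elements have the property that $\rho(A\cup e)=\rho(A\cup f)$ for \emph{all} $A$ (this is part of the definition) and, crucially, that adjoining a parallel partner never increases rank beyond what one of them already gives. The cleanest route is: from $f\bowtie g$ applied with the set $A\cup e$ in place of $A$, we get $\rho(A\cup e\cup f)=\rho(A\cup e\cup g)=\rho(A\cup e\cup f\cup g)$; from $e\bowtie f$ applied with $A$ we get $\rho(A\cup e)=\rho(A\cup f)=\rho(A\cup e\cup f)$; and then stringing these together with the plain equality $\rho(A\cup e)=\rho(A\cup f)$ (from $e\bowtie f$) and $\rho(A\cup f)=\rho(A\cup g)$ (from $f\bowtie g$) yields $\rho(A\cup e)=\rho(A\cup g)$ and $\rho(A\cup e\cup g)=\rho(A\cup e\cup f)=\rho(A\cup e)$, as required. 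I expect transitivity to be the main obstacle only in the sense that one must pick the right instantiations of the defining identities; no deep property of greedoids beyond~(GR2) and the definition is needed, and in fact Lemma~\ref{lem:rankuseful} can be invoked to streamline the argument that adjoining a whole set of mutually parallel elements does not change the rank.

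For the second assertion, suppose $\Gamma$ has a loop $\ell$, so $\rho(\{\ell\})=0$, and hence by~(GR2) and (GR3) (or directly by Lemma~\ref{lem:rankuseful} with $A=\emptyset$) we have $\rho(A\cup\ell)=\rho(A)$ for every $A\subseteq E$. I would first show every loop is parallel to $\ell$: if $\ell'$ is another loop then for all $A$, $\rho(A\cup\ell)=\rho(A)=\rho(A\cup\ell')$, and $\rho(A\cup\ell\cup\ell')=\rho((A\cup\ell)\cup\ell')=\rho(A\cup\ell)=\rho(A)$ since adjoining the loop $\ell'$ to the set $A\cup\ell$ does not change rank. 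Hence all loops lie in one $\bowtie$-class $C$. Conversely, I must check that $C$ contains no non-loop: if $e\bowtie\ell$ then taking $A=\emptyset$ in the definition gives $\rho(\{e\})=\rho(\{\ell\})=0$, so $e$ is a loop. Therefore $C$ is exactly the set of loops, completing the proof.
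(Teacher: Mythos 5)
Your proof of transitivity is correct and in fact takes a cleaner route than the paper's: by instantiating the definition of $f\bowtie g$ at the set $A\cup e$ you obtain $\rho(A\cup e\cup f)=\rho(A\cup e\cup g)$ directly, which together with $\rho(A\cup e\cup f)=\rho(A\cup e)$ (from $e\bowtie f$ at $A$) and $\rho(A\cup f)=\rho(A\cup g)$ immediately gives all three equalities required for $e\bowtie g$. The paper only uses the instantiations at $A$ itself and therefore has to invoke Lemma~\ref{lem:rankuseful} followed by a monotonicity sandwich via~\ref{ax:GR2} to recover $\rho(A\cup e\cup g)$; your choice of instantiation avoids that detour. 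Reflexivity and symmetry are indeed immediate, and the paper likewise dismisses them.

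There is, however, a genuine gap in your treatment of the loop class, in the direction showing that the class of a loop contains no non-loop. You argue: if $e\bowtie\ell$ then taking $A=\emptyset$ gives $\rho(\{e\})=\rho(\{\ell\})=0$, ``so $e$ is a loop.'' That last inference is false in a general greedoid: $\rho(\{e\})=0$ only says that $\{e\}$ is not feasible, whereas a loop is by definition an element belonging to \emph{no} feasible set, and greedoids are not closed under taking subsets of feasible sets. Concretely, in the branching greedoid of the rooted path $P_2$ the far edge $e_2$ satisfies $\rho(\{e_2\})=0$ yet lies in the feasible set $\{e_1,e_2\}$, so it is not a loop. To repair the step, suppose $e\bowtie\ell$ with $\ell$ a loop and $e$ not a loop; choose a feasible set $F$ containing $e$ and put $A=F-e$. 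Then $\rho(A\cup e)=\rho(F)=|F|$, while $\rho(A\cup\ell)=\rho(A)\leq |F|-1$ because no feasible subset of $A\cup\ell$ contains $\ell$; this contradicts $\rho(A\cup e)=\rho(A\cup\ell)$. (That same observation, namely $\rho(A\cup\ell)=\rho(A)$ for every $A$ because feasible sets avoid $\ell$, is also the clean justification for your earlier claim; Lemma~\ref{lem:rankuseful} with $A=\emptyset$ only yields $\rho(\{\ell\})=0$ and does not give the statement for general $A$.) The remaining part of your argument, that any two loops are parallel, is fine.
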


\begin{proof}
The only part of the lemma that is not immediately obvious is that $\bowtie$ is transitive. Let $\rho$ be the rank function of $\Gamma$ and $e$, $f$ and $g$ be elements of $\Gamma$, so that
$e\bowtie f$ and $f\bowtie g$. Then for any subset $A$ of elements of $\Gamma$, we have $\rho(A\cup e)=\rho (A\cup f)=\rho(A\cup e \cup f)$ and $\rho(A\cup f)=\rho(A\cup g)=\rho(A\cup f \cup g)$. Thus $\rho(A\cup e)=\rho(A\cup g)$. By applying Lemma~\ref{lem:rankuseful} to $A\cup f$ and elements $e$ and $g$, we see that $\rho(A\cup e\cup f \cup g) = \rho(A\cup f)$. Thus, by~\ref{ax:GR2}, $\rho(A\cup f) = \rho(A\cup e\cup f\cup g) \geq \rho(A\cup e \cup g) \geq \rho (A\cup e)$. But as $\rho(A\cup e)=\rho(A\cup f)$, equality must hold throughout, so $\rho(A\cup e\cup g)=\rho(A\cup e)=\rho(A\cup g)$, as required.
\end{proof}

\subsection{Complexity}

We assume some familiarity with computational complexity and refer the reader to one of the standard texts such as~\cite{Garey} or~\cite{Papadimitriou} for more background. Given two computational problems $\pi_1$ and $\pi_2$, we say that $\pi_2$ is \emph{Turing reducible} to $\pi_1$ if there exists a deterministic Turing machine solving $\pi_2$ in polynomial time using an oracle for $\pi_1$, that is a subroutine returning an answer to an instance of $\pi_1$ in constant-time. When $\pi_2$ is Turing reducible to $\pi_1$ we write $\pi_2 \propto_T \pi_1$ and we say that solving problem $\pi_1$ is at least as hard as solving problem $\pi_2$. The relation $\propto_T$ is transitive.

Informally, the class $\#$P is the counting analogue of NP, that is, the class of all counting problems corresponding to decision problems in NP. Slightly more precisely, a problem is in $\#$P if it counts the number of accepting computations or ``witnesses'' of a problem in NP. Consider the decision problem of determining whether a graph has a proper vertex $3$-colouring.
The obvious non-deterministic algorithm for this problem interprets a ``witness'' as a colouring of the vertices with $3$ colours and verifies that it is a proper colouring. So the corresponding problem in $\#$P would be to determine the number of proper vertex $3$-colourings.
A computational problem $\pi$ is said to be \emph{$\#$\text{P}-hard} if $\pi' \propto_T \pi$ for all $\pi' \in \#\text{P}$, and \emph{$\#$P-complete} if, in addition, $\pi \in \#\text{P}$.
Counting the number of vertex $3$-colourings of a graph is an example of an
$\#$P-complete problem.

The following lemma is crucial in many of our proofs.
\begin{lemma}\label{lem:gauss}
There is an algorithm which when given a non-singular integer $n \times n$ matrix $A$ and an integer $n$-vector $b$ such that the absolute value of every entry of $A$ and $b$ is at most $2^l$, outputs the vector $x$ so that $Ax=b$, running in time bounded by a polynomial in $n$ and $l$.
\end{lemma}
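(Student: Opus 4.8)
The plan is to reduce the problem to the evaluation of integer determinants, and then to control both the size of the answer and the cost of those evaluations.

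First, since $A$ is non-singular, Cramer's rule gives the unique solution of $Ax=b$ explicitly as $x_i=\det(A^{(i)})/\det(A)$, where $A^{(i)}$ is obtained from $A$ by replacing its $i$th column with $b$. So it suffices to compute the $n+1$ integers $\det(A)$ and $\det(A^{(1)}),\dots,\det(A^{(n)})$, and then the rational entries $x_i$ over the common denominator $\det(A)$. Next, one bounds sizes via Hadamard's inequality: the determinant of an $n\times n$ integer matrix whose entries have absolute value at most $2^l$ has absolute value at most $(n\,2^{2l})^{n/2}=n^{n/2}2^{nl}$, and the same bound applies to each $A^{(i)}$. Hence each of these determinants has $O(n(l+\log n))$ bits, so the output vector $x$, written with common denominator $\det(A)$, has total bit-length polynomial in $n$ and $l$; in particular there is no obstruction to merely recording the answer.

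The substantive point is to compute the determinants within a polynomial time bound, and this is where the real difficulty lies: naive Gaussian elimination over $\mathbb{Q}$ can produce intermediate fractions whose numerators and denominators grow exponentially. This classical obstacle (due to Edmonds, Bareiss and others) can be circumvented in either of two standard ways. One option is fraction-free (Bareiss) Gaussian elimination, in which every quantity arising during the elimination is, up to sign, a minor of the original integer matrix; by Hadamard's inequality each such quantity has polynomially many bits, and one checks that the divisions prescribed by the algorithm are always exact, so all arithmetic stays with integers of polynomial size. The alternative is a modular approach: pick primes $p_1<\dots<p_k$, each of polynomially bounded bit-size and with $k$ polynomial, whose product exceeds $2\,n^{n/2}2^{nl}$ (by the prime number theorem polynomially many primes of polynomial size suffice), compute each determinant modulo each $p_j$ by ordinary Gaussian elimination over $\mathbb{F}_{p_j}$ at a cost of $O(n^3)$ field operations on integers of polynomial size, and then reconstruct the true integer value from its residues by the Chinese Remainder Theorem, which is valid precisely because the value has absolute value less than $\tfrac12\prod_j p_j$.

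The main obstacle is therefore entirely the coefficient-growth phenomenon in the middle of Gaussian elimination; once it is handled by either the Bareiss identities or reduction modulo a controlled family of primes, assembling $\det(A)$, the $\det(A^{(i)})$ and the quotients $x_i$ is routine and stays within a polynomial time bound in $n$ and $l$.
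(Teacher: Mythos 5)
Your proposal is correct and matches the paper's approach: the paper simply cites the Bareiss fraction-free variant of Gaussian elimination (and Edmonds) for this lemma, which is exactly the first of your two routes, and your Hadamard/Cramer size analysis is the standard justification underlying that citation. The modular/CRT alternative you sketch is also valid but is not needed beyond what the paper invokes.
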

One algorithm to do this is a variant of Gaussian elimination known as the Bareiss algorithm~\cite{zbMATH03298216}. Similar ideas were presented by Edmonds~\cite{Edmonds}. See also~\cite{zbMATH00467138}.

\section{The Tutte Polynomial of a Greedoid}
\label{The Tutte Polynomial of a Rooted Graph and of a Rooted Digraph}

Extending the definition of the Tutte polynomial of a matroid, McMahon and Gordon defined the Tutte polynomial of a greedoid in~\cite{GordonMcMahon}.
The \emph{Tutte polynomial} of a greedoid $\Gamma$ with ground set $E$ and rank function $\rho$ is given by
\[ T(\Gamma;x,y) = \sum_{A\subseteq E}(x-1)^{\rho(\Gamma)-\rho(A)}(y-1)^{|A|-\rho(A)}.\]
When $\Gamma$ is a matroid, this reduces to the usual definition of the Tutte polynomial of a matroid.
For a rooted graph $G$ we let $T(G;x,y) = T(\Gamma(G);x,y)$, for a rooted digraph $D$ we let $T(D;x,y)=T(\Gamma(D);x,y)$ and for a binary matrix $M$ we let $T(M;x,y)=T(\Gamma(M);x,y)$.

\begin{example}\label{eg:littletrees}\mbox{ }
\begin{enumerate}
\item Let
$P_k$ be the rooted (undirected) path with $k$ edges in which the root is one of the leaves. Then
\[T(P_k;x,y) = 1+ \sum_{i=1}^k (x-1)^i y^{i-1}.\]

\item Let
$S_k$ be the rooted (undirected) star with $k$ edges in which the root is the central vertex. Then
\[T(S_k;x,y) = x^k.\]
\end{enumerate}
\end{example}

The Tutte polynomial of a greedoid retains many of the properties of the Tutte polynomial of a matroid, for example, it has a delete--contract recurrence, although its form is not as simple as that of the Tutte polynomial of a matroid~\cite{GordonMcMahon}.
Moreover, for a greedoid $\Gamma$:
\begin{itemize}
\item $T(\Gamma;1,1)$ is the number of bases of $\Gamma$;
\item $T(\Gamma;2,1)$ is the number of feasible sets of $\Gamma$;
\item $T(\Gamma;1,2)$ is the number of subsets $A$ of elements of $\Gamma$ so that $\rho(A)=\rho(\Gamma)$.
\item $T(\Gamma;2,2)= 2^{|E(\Gamma)|}$.
\end{itemize}
But the Tutte polynomial of a greedoid also differs fundamentally from the Tutte polynomial of a matroid, for instance,
unlike the Tutte polynomial of a matroid, the Tutte polynomial of a greedoid can have negative coefficents. For example, $T(\Gamma(P_2);x,y) = x^2y-2xy+x+y$.

The Tutte polynomial of a rooted graph has some of the same evaluations as the Tutte polynomial of a unrooted graph.
Let $G$ be a rooted graph with edge set $E$.
\begin{itemize}
\item $T(G;1,1)$ is the number of spanning trees of the root component of $G$. (When $G$ is connected, this is just the number of spanning trees of $G$.)
\item $T(G;2,1)$ is the number of subsets $A$ of $E$, so that the root component of $G|A$ is a tree containing all the edges of $A$.
\item $T(G;1,2)$ is the number of subsets $A$ of $E$ so that the root component of $G|A$ includes every vertex of the root component of $G$. (When $G$ is connected, this is just the number of subsets $A$ so that $G|A$ is connected.)
\item If no component of $G$ other than the root component has edges, then
$T(G;1,0)$ is the number of acyclic orientations of $G$ with a unique source. Otherwise $T(G;1,0)=0$.
\end{itemize}

We record the following proposition stating that the Tutte polynomial of a connected rooted graph $G$ coincides with the Tutte polynomial of the corresponding unrooted graph $G'$ along the line $x=1$. This is easy to prove by noting that $\rho(G) = r(G')$ and a subset $A$ of the edges of $G$ satisfies $\rho(A)=\rho(G)$ if and only if $r(A)=r(G')$.

\begin{proposition}\label{prop:x=1}
Let $G=(V,E,r)$ be a connected rooted graph and let $G'=(V,E)$ be the corresponding unrooted graph. Then
\[T(G;1,y) = T(G';1,y).\]
\end{proposition}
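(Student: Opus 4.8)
The plan is to compare the two sums defining $T(G;1,y)$ and $T(G';1,y)$ term by term and show that, after the substitution $x=1$, only the terms with $\rho(A)=\rho(\Gamma(G))$ survive on the greedoid side, and only the terms with $r(A)=r(G')$ survive on the matroid side, and that these two collections of subsets coincide. Concretely, setting $x=1$ in
\[
T(\Gamma;x,y)=\sum_{A\subseteq E}(x-1)^{\rho(\Gamma)-\rho(A)}(y-1)^{|A|-\rho(A)}
\]
kills every term for which $\rho(\Gamma)-\rho(A)>0$, since $0^{k}=0$ for $k\ge 1$ while $0^{0}=1$; hence
\[
T(G;1,y)=\sum_{\substack{A\subseteq E\\ \rho_G(A)=\rho(G)}}(y-1)^{|A|-\rho(G)}.
\]
Similarly, since $\rho(\Gamma)-\rho(A)=r(E)-r(A)$ on the matroid side with $G'$ connected, $x=1$ leaves
\[
T(G';1,y)=\sum_{\substack{A\subseteq E\\ r(A)=r(G')}}(y-1)^{|A|-r(G')}.
\]

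The key step is then the two claimed identities: first that $\rho(G)=r(G')$ (equivalently, a basis of $\Gamma(G)$ has the same size as a basis of the cycle matroid of $G'$), and second that for a subset $A\subseteq E$ we have $\rho_G(A)=\rho(G)$ if and only if $r(A)=r(G')$. For the first, $G'$ is connected, so $r(G')=|V|-1$; a basis of $\Gamma(G)$ is the edge set of a spanning tree of the root component of $G$, and because $G$ is connected the root component is all of $G$, so a basis has size $|V|-1=r(G')$ as well. For the second, $\rho_G(A)=\rho(G)=|V|-1$ means $A$ contains a spanning tree of the root component of $G|A$, i.e.\ (using that $G$ is connected) $A$ contains a spanning tree of $G$, which is exactly the statement that the spanning subgraph $(V,A)$ is connected, i.e.\ $r(A)=|V|-1=r(G')$. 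Once these are established, the two surviving sums are indexed by the same set of subsets $A$ and have identical summands $(y-1)^{|A|-(|V|-1)}$, so they are equal.

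I expect the only mild subtlety to be bookkeeping about the greedoid definition of $\rho_G$: one must note that a feasible set of $\Gamma(G)$ is a subset $A$ whose root component in $G|A$ is a tree \emph{containing every edge of $A$}, so a maximal feasible subset of an arbitrary $A$ need not be all of $A$; but this does not affect the argument, because $\rho_G(A)=\rho(G)$ forces the maximal feasible subset to be a spanning tree of the root component of $G|A$, and hence of $G$, and conversely if $(V,A)$ is connected then any spanning tree of it is a feasible subset of $A$ of size $|V|-1$. No real obstacle arises; the proposition is essentially immediate once one observes that the substitution $x=1$ annihilates precisely the terms where $\Gamma(G)$ and the cycle matroid of $G'$ could disagree (those with $\rho(A)<\rho(G)$), and that on the complementary terms the two rank functions and the surviving monomials literally agree.
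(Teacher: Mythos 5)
Your proof is correct and follows exactly the route the paper indicates: the paper dispatches this proposition by remarking that it suffices to note $\rho(G)=r(G')$ and that $\rho_G(A)=\rho(G)$ if and only if $r(A)=r(G')$, which are precisely the two identities you verify after observing that setting $x=1$ annihilates all other terms. Your additional care about maximal feasible subsets versus spanning trees is a sensible bookkeeping check but introduces nothing beyond the paper's intended argument.
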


We list some evaluations of the Tutte polynomial of a digraph. Let $D$ be a rooted digraph with edge set $E$ and root $r$.
\begin{itemize}
\item $T(D;1,1)$ is the number of spanning arborescences of the root component of $D$ rooted at $r$.
(When $D$ is root-connected, this is just its number of spanning arborescences rooted at $r$.)
\item $T(D;2,1)$ is the number of subsets $A$ of $E$, so that the root component of $D|A$ is an arborescence rooted at $r$ containing every edge of $A$.
\item $T(D;1,2)$ is the number of subsets $A$ of $E$, so that the root component of $D|A$ includes every vertex of the root component of $D$. (When $D$ is connected, this is just the number of subsets $A$ so that $D|A$ is root-connected.)
\item $T(D;1,0)=1$ if $D$ is acyclic and root-connected, and $0$ otherwise.
\end{itemize}
The last evaluation will be discussed in more detail in Section~\ref{sec:rooted digraph}.

Gordon and McMahon~\cite{GordonMcMahon} proved that if $T_1$ and $T_2$ are rooted arborescences, then
$T(T_1;x,y) = T(T_2;x,y)$ if and only if $T_1 \cong T_2$.

We list some evaluations of the Tutte polynomial of a binary matroid. Let $M$ be an $m\times n$ binary matrix with linearly independent rows.
\begin{itemize}
\item $T(M;1,1)$ is the number of subsets $A$ of the columns of $M$ so that the submatrix of $M$ corresponding to the columns in $A$ is non-singular.
\item $T(M;2,1)$ is the number of subsets $A$ of the columns of $M$ so that the submatrix $M_{[|A|],A}$ is non-singular.
\item $T(M;1,2)$ is the number of subsets $A$ of the columns of $M$ containing a subset $A'$ so that the submatrix of $M$ corresponding to the columns in $A'$ is non-singular.
\end{itemize}

If a point $(a,b)$ lies on the hyperbola $H_1$ then we have $(a-1)(b-1)=1$ by definition. Thus the Tutte polynomial of a greedoid $\Gamma$
evaluated at such a point is given by
\begin{align*}
T(\Gamma;a,b) &= \sum_{A \subseteq E(\Gamma)}(a-1)^{\rho(\Gamma)-\rho(A)}(b-1)^{|A|-\rho(A)}\\&=(a-1)^{\rho(\Gamma)}\sum_{A \subseteq E(\Gamma)}\left(\frac{1}{a-1}\right)^{|A|}=(a-1)^{\rho(\Gamma)-|E(\Gamma)|}a^{|E(\Gamma)|}.
\end{align*}
Therefore, given $|E(\Gamma)|$ and $\rho(\Gamma)$, it is easy to compute $T(\Gamma;a,b)$ in polynomial time. For all of the greedoids that we consider, both $|E(\Gamma)|$ and $\rho(\Gamma)$ will be either known or easily computed.

The \emph{characteristic polynomial} of a greedoid was first introduced by Gordon and McMahon in~\cite{GordonMcMahon2} and is a generalization of the characteristic or chromatic polynomial of a matroid. For a greedoid $\Gamma$, the \emph{characteristic polynomial} $p(\Gamma;\lambda)$ is defined by \begin{equation}\label{characteristic polynomial specialization of tutte}p(\Gamma;\lambda) = (-1)^{\rho(\Gamma)} T(\Gamma;1-\lambda,0).\end{equation}

\section{Greedoid Constructions}
\label{sec:constructions}
In this section we introduce three greedoid constructions and give expressions for the Tutte polynomial of greedoids resulting from these constructions.

The first construction is just the generalization of the $k$-thickening operation introduced by Brylawski~\cite{Brylawski} from matroids to greedoids.
Given a greedoid $\Gamma=(E,\mathcal F)$, its $k$-thickening is the greedoid $\Gamma^k$ that, informally speaking, is formed from $\Gamma$ by replacing each element by $k$ parallel elements. More precisely, $\Gamma^k$ has ground set $E'= E \times [k]$ and collection $\mathcal F'$ of feasible sets as follows. Define $\mu$ to be the projection operator $\mu: 2^{E \times [k]} \rightarrow 2^E$ so that element $e\in \mu(A)$ if and only if $(e,i) \in A$ for some $i$. Now a subset $A$ is feasible in $\Gamma^k$ if and only if $\mu(A)$ is feasible in $\Gamma$ and $|\mu(A)|=|A|$. The latter condition ensures that $A$ does not contain more than one element replacing a particular element of $\Gamma$.

It is clear that $\Gamma^k$ is a greedoid and moreover $\rho_{\Gamma^k}(A)= \rho_{\Gamma}(\mu(A))$. In particular $\rho(\Gamma^k)=\rho(\Gamma)$. For any element $e$ of $\Gamma$ the elements $(e,i)$ and $(e,j)$ are parallel. The effect of the $k$-thickening operation on the Tutte polynomial of a greedoid is given in the following theorem, generalizing the expression for the $k$-thickening of the Tutte polynomial given by Brylawski~\cite{Brylawski}.

\begin{theorem}
\label{greedoid thickening}
Let $\Gamma$ be a greedoid. The Tutte polynomial of the $k$-thickening $\Gamma^k$ of $\Gamma$ when $y \neq -1$ is given by
\begin{equation}
\label{greedoid thickening equation when y not equal -1}
T(\Gamma^k;x,y) = (1+y+ \cdots +y^{k-1})^{\rho_G(\Gamma)}T\left(\Gamma;\frac{x+y+ \cdots +y^{k-1}}{1+y+ \cdots + y^{k-1}},y^k\right).\end{equation}
When $y=-1$ we have
\[T(\Gamma^k;x,-1) = \begin{cases} (x-1)^{\rho_G(\Gamma)} & \text{if $k$ is even; }\\ T(\Gamma;x,-1) & \text{if $k$ is odd. }
\end{cases}\]
\end{theorem}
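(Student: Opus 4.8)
The plan is to compute $T(\Gamma^k;x,y)$ directly from the definition, organising the sum over subsets of the ground set $E\times[k]$ according to their image under the projection operator $\mu$. Recall from the discussion preceding the theorem that $\rho(\Gamma^k)=\rho(\Gamma)$ and $\rho_{\Gamma^k}(A)=\rho_\Gamma(\mu(A))$ for every $A\subseteq E\times[k]$. Writing $\rho=\rho_\Gamma$ and grouping the subsets $A$ according to their projection $S=\mu(A)\subseteq E$, the exponent of $(x-1)$ is constant on each group, so
\[ T(\Gamma^k;x,y)=\sum_{S\subseteq E}(x-1)^{\rho(\Gamma)-\rho(S)}\sum_{A:\,\mu(A)=S}(y-1)^{|A|-\rho(S)}. \]

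The next step is to evaluate the inner sum. A set $A$ with $\mu(A)=S$ is obtained by choosing, independently for each $e\in S$, a non-empty subset of $\{e\}\times[k]$, and nothing from $\{e\}\times[k]$ for $e\notin S$; hence
\[ \sum_{A:\,\mu(A)=S}(y-1)^{|A|}=\prod_{e\in S}\Bigl(\sum_{i=1}^{k}\binom{k}{i}(y-1)^{i}\Bigr)=\bigl((1+(y-1))^{k}-1\bigr)^{|S|}=(y^{k}-1)^{|S|}. \]
Substituting back, and splitting $(y^{k}-1)^{|S|}=(y^{k}-1)^{\rho(S)}(y^{k}-1)^{|S|-\rho(S)}$ (legitimate since $|S|\ge\rho(S)$) and using the polynomial identity $\frac{y^{k}-1}{y-1}=1+y+\cdots+y^{k-1}$ to cancel the apparently negative power of $y-1$, one obtains
\[ T(\Gamma^k;x,y)=\sum_{S\subseteq E}(x-1)^{\rho(\Gamma)-\rho(S)}\,(1+y+\cdots+y^{k-1})^{\rho(S)}\,(y^{k}-1)^{|S|-\rho(S)}. \]

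To finish the case $y\neq-1$, I would pull out $(1+y+\cdots+y^{k-1})^{\rho(\Gamma)}$ (this factor is nonzero: its only rational root is $-1$, which occurs exactly when $k$ is even), combine the leftover power of $1+y+\cdots+y^{k-1}$ with $(x-1)^{\rho(\Gamma)-\rho(S)}$, and note that
\[ \frac{x-1}{1+y+\cdots+y^{k-1}}+1=\frac{x+y+\cdots+y^{k-1}}{1+y+\cdots+y^{k-1}} \qquad\text{and}\qquad (y^{k}-1)+1=y^{k}. \]
This identifies the sum as $(1+y+\cdots+y^{k-1})^{\rho(\Gamma)}\,T\!\left(\Gamma;\tfrac{x+y+\cdots+y^{k-1}}{1+y+\cdots+y^{k-1}},y^{k}\right)$, since $\rho(\Gamma^k)=\rho(\Gamma)$. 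If one prefers to sidestep the division entirely, one can observe that both sides are polynomials in $x$ and $y$ and that the identity has now been verified for all $y$ with $y\neq1$, which suffices. For $y=-1$ I would rerun the grouping argument with $y=-1$: the inner sum over $A$ with $\mu(A)=S$ is $(-2)^{-\rho(S)}\bigl((-1)^{k}-1\bigr)^{|S|}$. When $k$ is even this is $(-2)^{-\rho(S)}0^{|S|}$, which vanishes unless $S=\emptyset$, leaving just $(x-1)^{\rho(\Gamma)}$; when $k$ is odd it equals $(-2)^{|S|-\rho(S)}$, and summing over $S$ recovers $T(\Gamma;x,-1)$.

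There is no substantial obstacle here; the argument is a careful bookkeeping computation. The only point requiring a little care is the legitimacy of the manipulations involving $(y-1)^{-\rho(S)}$ and of dividing by $1+y+\cdots+y^{k-1}$, which is handled either by the polynomial-identity remark above or by observing that $\frac{y^{k}-1}{y-1}$ is genuinely a polynomial and that $1+y+\cdots+y^{k-1}\neq0$ whenever $y\neq-1$.
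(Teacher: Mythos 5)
Your proof is correct and follows essentially the same route as the paper: group subsets of $E\times[k]$ by their projection under $\mu$, evaluate the inner sum as $(y^k-1)^{|S|}$, and reorganise the powers of $y-1$ to recognise a rescaled evaluation of $T(\Gamma)$. The only cosmetic difference is that the paper disposes of $y=1$ by a separate direct count of the sets $A'$ with $|A'|=|A|$, whereas you handle it by noting both sides are polynomials and the identity holds for all $y\neq 1$; both are fine.
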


\begin{proof}
Let $\Gamma^k$ be the $k$-thickened greedoid, let $E'$ denote its ground set and let $E$ be the ground set of $\Gamma$.
Then $E'=E \times [k]$. Let $\mu$ be the mapping defined in the discussion at the beginning of this section.
To ensure that we do not divide by zero in our calculations, we prove the case when $y=1$ separately.

For each $A' \subseteq E'$ we have $\rho_{\Gamma^k}(A') = \rho_{\Gamma}(\mu(A'))$ and furthermore $\rho(\Gamma^k) = \rho(\Gamma)$. The Tutte polynomial of $\Gamma^k$ when $y \notin\{-1,1\}$ is thus given by
\begin{align}
T(\Gamma^k;x,y) &= \sum_{A' \subseteq E'}(x-1)^{\rho(\Gamma^k)-\rho_{\Gamma^k}(A')}(y-1)^{|A'|-\rho_{\Gamma^k}(A')} \notag\\
&=\sum_{A \subseteq E}\sum_{\substack{A' \subseteq E': \\ \mu(A') = A}}(x-1)^{\rho(\Gamma)-\rho_{\Gamma}(\mu(A'))}(y-1)^{|A'|-\rho_{\Gamma}(\mu(A'))} \label{equation in greedoid thickening proof}\\
&=\sum_{A \subseteq E}(x-1)^{\rho(\Gamma)-\rho_{\Gamma}(A)}(y-1)^{-\rho_{\Gamma}(A)}\sum_{\substack{A' \subseteq E': \\ \mu(A')=A}}(y-1)^{|A'|}\notag\\
&=\sum_{A \subseteq E}(x-1)^{\rho(\Gamma)-\rho_{\Gamma}(A)}(y-1)^{-\rho_{\Gamma}(A)}(y^k-1)^{|A|}\notag\\
&=(1+y+\cdots + y^{k-1})^{\rho(\Gamma)}\sum_{A \subseteq E} \left(\frac{(x-1)(y-1)}{y^k-1}\right)^{\rho(\Gamma)-\rho_{\Gamma}(A)}(y^k-1)^{|A|-\rho_{\Gamma}(A)}\notag\\
&=(1+y+ \cdots + y^{k-1})^{\rho(\Gamma)}T\left(\Gamma; \frac{x+y+\cdots + y^{k-1}}{1+y+ \cdots + y^{k-1}},y^k\right).\notag
\end{align}

When $y=1$ we get non-zero terms in Equation~\ref{equation in greedoid thickening proof} if and only if $|A'|=\rho_{\Gamma}(\mu(A'))$, which implies that $|A'|=|A|$. For each $A \subseteq E$ there are $k^{|A|}$ choices for $A'$  such that $\mu(A')=A$ and $|A'|=|A|$. Therefore we have
\begin{align*}
T(\Gamma^k;x,1) &= \sum_{\substack{A \subseteq E: \\ \rho_{\Gamma}(A) = |A|}}(x-1)^{\rho(\Gamma)-\rho_{\Gamma}(A)} \sum_{\substack{A' \subseteq E': \\ \mu(A') = A, \\ |A'|=|A|}}1 = \sum_{\substack{A \subseteq E: \\ \rho_{\Gamma}(A) = |A|}}(x-1)^{\rho(\Gamma)-\rho_{\Gamma}(A)} k^{\rho_{\Gamma}(A)}\\
&= \sum_{\substack{A \subseteq E: \\ \rho_{\Gamma}(A) = |A|}}\left(\frac{x-1}{k}\right)^{\rho(\Gamma)-\rho_{\Gamma}(A)} k^{\rho(\Gamma)}= k^{\rho(\Gamma)} T\left(\Gamma;\frac{x+k-1}{k},1\right)
\end{align*}
which agrees with Equation~\ref{greedoid thickening equation when y not equal -1} when $y=1$.

When $y=-1$ we have
\begin{align}
T(\Gamma^k;x,-1) &=\sum_{A \subseteq E}\sum_{\substack{A' \subseteq E': \\ \mu(A') = A}}(x-1)^{\rho(\Gamma)-\rho_{\Gamma}(\mu(A'))}(-2)^{|A'|-\rho_{\Gamma}(\mu(A'))} \notag\\
&=\sum_{A \subseteq E}(x-1)^{\rho(\Gamma)-\rho_{\Gamma}(A)}(-2)^{-\rho_{\Gamma}(A)}\sum_{\substack{A' \subseteq E': \\ \mu(A')=A}}(-2)^{|A'|}\notag\\
&=\sum_{A \subseteq E}(x-1)^{\rho(\Gamma)-\rho_{\Gamma}(A)}(-2)^{-\rho_{\Gamma}(A)}((-1)^k-1)^{|A|}\notag\\
&= \left\{ \begin{array}{ll} (x-1)^{\rho(\Gamma)} & \text{if $k$ is even};\\ T(\Gamma;x,-1) & \text{if $k$ is odd.} \end{array} \right. \notag
\end{align}
Note that the only contribution to $T(\Gamma^k;x,-1)$ when $k$ is even is from the empty set.
\end{proof}

The second construction is a little more involved. To motivate it we first describe a natural construction operation on rooted graphs. Let $G$ and $H$ be disjoint rooted graphs with $G$ being connected. Then the \emph{$H$-attachment} of $G$, denoted by $G \sim H$, is formed by taking $G$ and $\rho(G)$ disjoint copies of $H$, and identifying each vertex of $G$ other than the root with the root vertex of one of the copies of $H$. The root of $G\sim H$ is the root of $G$. See Figure~\ref{fig:attach} for an illustration of the attachment operation.

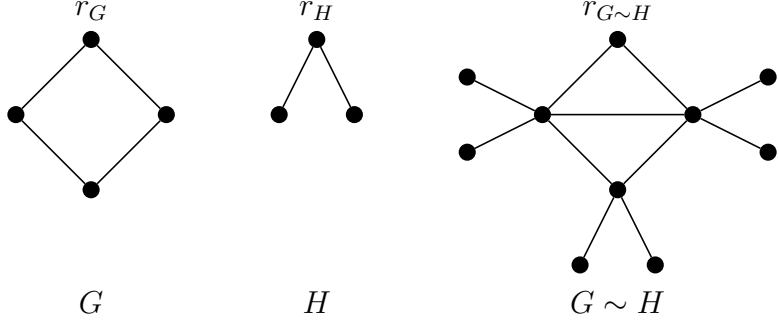
\begin{figure}[h]
\centering
\begin{tikzpicture}[label distance=0pt,semithick,v/.style={circle,fill=black,draw=black,thick,
minimum size=2pt,inner sep=2pt
},e/.style={inner sep=2pt,rectangle,text height=1.5ex,text depth=.25ex,auto}]
\node(1)[v] at (0,0) {};
\node(2)[v] at (1,1) {};
\node(3)[v,label=above:$r_{G\sim H}$] at (0,2) {};
\node(4)[v] at (-1,1) {};
\node(5)[v] at (2,1.5){};
\node(6)[v] at (2,0.5){};
\node(7)[v] at (-2,1.5){};
\node(8)[v] at (-2,0.5){};
\node(9)[v] at (-0.5,-1){};
\node(10)[v] at (0.5,-1){};
\node at (0,-1.5) {$G\sim H$};
\draw (1) to (2);
\draw (3) to (2);
\draw (3) to (4);
\draw (1) to (4);
\draw (4) to (2);
\draw (5) to (2);
\draw (6) to (2);
\draw (7) to (4);
\draw (8) to (4);
\draw (9) to (1);
\draw (10) to (1);
\node(11)[v] at (-7,0) {};
\node(12)[v] at (-6,1) {};
\node(13)[v,label=above:$r_{G}$] at (-7,2) {};
\node(14)[v] at (-8,1) {};
\node at (-7,-1.5) {$G$};
\draw (11) to (12);
\draw (13) to (12);
\draw (13) to (14);
\draw (11) to (14);
\node(23)[v] at (-4.5,1) {};
\node(22)[v] at (-3.5,1) {};
\node(21)[v,label=above:$r_{H}$] at (-4,2) {};
\node at (-4,-1.5) {$H$};
\draw (21) to (22);
\draw (23) to (21);
\end{tikzpicture}
\caption{An example of the attachment operation.\label{fig:attach}}
\end{figure}

Suppose that $V(G)=\{r,v_1,\ldots,v_{\rho(G)}\}$, where $r$ is the root of $G$, let $E_0$ be the edge set of $G$ and let $E_i$ be the edge set of the copy of $H$ attached at $v_i$. A set $F$ is feasible in $\Gamma(G\sim H)$ if and only if each of the following conditions holds.
\begin{enumerate}
\item $F \cap E_0$ is feasible in $\Gamma(G)$.
\item For all $i$ with $1 \leq i \leq \rho(G)$, $F \cap E_i$ is feasible in $\Gamma(H)$.
\item For all $i$ with $1 \leq i \leq \rho(G)$, if $v_i$ is not in the root component of
$G|(F \cap E_0)$, then $F \cap E_i = \emptyset$.
\end{enumerate}

In order to extend these ideas to general greedoids, we begin by describing the notion of a closed set, which was first defined for greedoids by Korte and Lovasz~\cite{KorteLovasz}. Let $\Gamma$ be a greedoid with ground set $E$ and rank function $\rho$. Given a subset $A$ of $E$, its \emph{closure} $\sigma_{\Gamma}(A)$ is defined by $\sigma_{\Gamma}(A)=\{e: \rho(A\cup e)=\rho(A)\}$. We will drop the dependence on $\Gamma$ whenever the context is clear.
Note that it follows from the definition that $A \subseteq \sigma(A)$.
Moreover Lemma~\ref{lem:rankuseful} implies that $\rho(\sigma(A))=\rho(A)$. Furthermore if $e \notin \sigma(A)$, then $\rho(A \cup e) > \rho(A)$, so axiom~\ref{ax:GR2} implies that $\rho(\sigma(A) \cup e)> \rho(\sigma(A))$ and hence $\sigma(\sigma(A))=\sigma(A)$. A subset $A$ of $E$ satisfying $A=\sigma(A)$ is said to be \emph{closed}. Every subset of $E$ of the form $\sigma(X)$ for some $X$ is closed.

We now introduce what we call an attachment function. Let $\Gamma$ be a greedoid with rank function $\rho$. A function $f:\mathcal{F} \rightarrow 2^{[\rho(\Gamma)]}$ is called a \emph{$\Gamma$-attachment function} if it satisfies both of the following.
\begin{enumerate}
\item For each feasible set $F$, we have $|f(F)|=\rho(F)$.
\item If $F_1$ and $F_2$ are feasible sets and $F_1 \subseteq \sigma(F_2)$ then $f(F_1) \subseteq f(F_2)$.
\end{enumerate}

The following property of attachment functions is needed later.
\begin{lemma}\label{lem:attachment}
Let $\Gamma$ be a greedoid and $f$ be a $\Gamma$-attachment function. Let $A$ be a subset of the elements of $\Gamma$ and let $F_1$ and $F_2$ be maximal feasible subsets of $A$. Then $f(F_1)=f(F_2)$.
\end{lemma}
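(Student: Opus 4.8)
The plan is to show that any two maximal feasible subsets $F_1$ and $F_2$ of $A$ have the same rank, and then to deduce $f(F_1)=f(F_2)$ from property (2) of an attachment function by establishing $F_1\subseteq\sigma(F_2)$ and $F_2\subseteq\sigma(F_1)$, which forces $f(F_1)\subseteq f(F_2)$ and $f(F_2)\subseteq f(F_1)$.

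First I would observe that since $F_1$ and $F_2$ are both maximal feasible subsets of $A$, greedoid axiom \ref{ax:G2} applied to this pair forces $|F_1|=|F_2|$: if, say, $|F_2|<|F_1|$, then there would be an element $x\in F_1-F_2\subseteq A$ with $F_2\cup x$ feasible, contradicting the maximality of $F_2$ inside $A$. Hence $\rho(F_1)=|F_1|=|F_2|=\rho(F_2)$; call this common value $r$.

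Next I would show $F_1\subseteq\sigma(F_2)$. Take any $e\in F_1$. If $e\notin\sigma(F_2)$ then $\rho(F_2\cup e)>\rho(F_2)=r$, so by the greedoid rank axioms $F_2\cup e$ contains a feasible set of size $r+1$; applying axiom \ref{ax:G2} repeatedly (exchanging into $F_2$ from this larger feasible set, all of whose elements lie in $F_2\cup e\subseteq A$) yields a feasible subset of $A$ strictly larger than $F_2$, contradicting maximality. Actually the cleaner route: since $e\in F_1\subseteq A$ and $e\notin\sigma(F_2)$, we have $\rho(F_2\cup e)=\rho(F_2)+1$ and $F_2\cup e$ has a feasible subset $F'$ of size $r+1$ contained in $A$, contradicting maximality of $F_2$. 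Either way $F_1\subseteq\sigma(F_2)$, and by symmetry $F_2\subseteq\sigma(F_1)$. Property (2) of the attachment function then gives $f(F_1)\subseteq f(F_2)$ and $f(F_2)\subseteq f(F_1)$, hence $f(F_1)=f(F_2)$.

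The only real subtlety is the step $e\notin\sigma(F_2)\Rightarrow$ contradiction: one must be careful that a feasible set witnessing $\rho(F_2\cup e)=r+1$ need not itself be $F_2\cup e$, but it is a subset of $F_2\cup e\subseteq A$ of size $r+1$, which already contradicts the maximality of $F_2$ as a feasible subset of $A$ — so in fact no exchange argument is even needed there. For the equality $|F_1|=|F_2|$ one does use axiom \ref{ax:G2} directly as above. I expect the write-up to be short; the main thing to get right is phrasing the maximality contradictions precisely, since "maximal feasible subset of $A$" means maximal under inclusion, and one must convert that into a statement about cardinalities and about ranks.
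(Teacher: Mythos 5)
Your proof is correct and follows essentially the same route as the paper's: equal cardinality of maximal feasible subsets via axiom (G2), then $F_1\subseteq\sigma(F_2)$ and $F_2\subseteq\sigma(F_1)$, then condition (2) of the attachment function applied in both directions. The one phrasing point is your remark that ``no exchange argument is even needed'': a feasible subset of $A$ of size $r+1$ contradicts the \emph{inclusion}-maximality of $F_2$ only after one further appeal to (G2) (equivalently, to the fact that $|F_2|=\rho(A)$), but that is exactly the argument you already made to get $|F_1|=|F_2|$, so nothing is missing.
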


\begin{proof}
It follows from the axioms for the feasible sets of a greedoid that all maximal feasible subsets of $A$ have the same size. Thus $\rho(F_1)=\rho(F_2)=\rho(A)$.
For every element $e$ of $A$, $\rho (F_1) \leq \rho(F_1\cup e) \leq \rho(A)$. As $\rho(F_1)=\rho(A)$, equality must hold throughout.
Thus $e\in \sigma(F_1)$.
Hence $A\subseteq \sigma(F_1)$, so $F_2\subseteq \sigma(F_1)$. By symmetry, $F_1\subseteq \sigma(F_2)$.
The result then follows from the second condition satisfied by a $\Gamma$-attachment function.
\end{proof}

Given greedoids $\Gamma_1$ and $\Gamma_2$ with disjoint ground sets, and $\Gamma_1$-attachment function $f$, we define the \emph{$\Gamma_2$-attachment of $\Gamma_1$}, denoted by $\Gamma_1 \sim_f \Gamma_2$ as follows. The ground set $E$ is the union of the ground set $E_0$ of $\Gamma_1$ together with $\rho=\rho(\Gamma_1)$ disjoint copies $E_1,\ldots, E_\rho$ of the ground set of $\Gamma_2$.
In the following we abuse notation slightly by saying that for $i>0$, a subset of $E_i$ is feasible in $\Gamma_2$ if the corresponding subset of the elements of $\Gamma_2$ is feasible.
A subset $F$ of $E$ is feasible if and only if each of the following conditions holds.
\begin{enumerate}
\item $F \cap E_0$ is feasible in $\Gamma_1$.
\item For all $i$ with $1 \leq i \leq \rho$, $F \cap E_i$ is feasible in $\Gamma_2$.
\item For all $i$ with $1 \leq i \leq \rho$, if $i\notin f(F \cap E_0)$ then $F \cap E_i=\emptyset$.
\end{enumerate}

\begin{proposition}
For any greedoids $\Gamma_1$ and $\Gamma_2$, and $\Gamma_1$-attachment function $f$, the
\emph{$\Gamma_2$-attachment of $\Gamma_1$} is a greedoid.
\end{proposition}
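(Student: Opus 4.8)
The plan is to verify the greedoid axioms (G1) and (G2) directly for $\Gamma := \Gamma_1 \sim_f \Gamma_2$, drawing on axiom (G2) for $\Gamma_1$ and $\Gamma_2$ and on the second defining property of the attachment function $f$. Throughout, for a subset $S$ of the ground set $E = E_0 \cup E_1 \cup \cdots \cup E_\rho$, I write $S_0 = S \cap E_0$ and $S_i = S \cap E_i$ for $1 \le i \le \rho$, so that $S$ is the disjoint union of $S_0, S_1, \ldots, S_\rho$. Axiom (G1) is immediate, since for $S = \emptyset$ the three conditions defining feasibility hold trivially, the third one vacuously. For (G2), I take feasible sets $F$ and $F'$ with $|F'| < |F|$ and seek $x \in F - F'$ with $F' \cup \{x\}$ feasible; the argument splits according to whether $F'_0$ is a maximal feasible subset of $F_0 \cup F'_0$ in $\Gamma_1$.

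If it is not, then $\rho_{\Gamma_1}(F_0 \cup F'_0) > |F'_0|$, so choosing a feasible set $C \subseteq F_0 \cup F'_0$ with $|C| = \rho_{\Gamma_1}(F_0 \cup F'_0)$ and applying axiom (G2) for $\Gamma_1$ to the pair $(C, F'_0)$ produces an element $y \in C - F'_0 \subseteq F_0 - F'_0$ with $F'_0 \cup \{y\}$ feasible in $\Gamma_1$. Then $y \in F - F'$, and $G := F' \cup \{y\}$ is feasible in $\Gamma$: the first defining condition holds because $G_0 = F'_0 \cup \{y\}$, the second because $G_i = F'_i$ for all $i \ge 1$, and the third because $F'_0 \subseteq F'_0 \cup \{y\} \subseteq \sigma_{\Gamma_1}(F'_0 \cup \{y\})$ forces $f(F'_0) \subseteq f(F'_0 \cup \{y\})$ by the second property of $f$, so the constraint placed on $G$ is no stronger than the one satisfied by $F'$.

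If $F'_0$ is a maximal feasible subset of $F_0 \cup F'_0$, then (exactly as in the proof of Lemma~\ref{lem:attachment}) $F_0 \cup F'_0 \subseteq \sigma_{\Gamma_1}(F'_0)$, hence $F_0 \subseteq \sigma_{\Gamma_1}(F'_0)$, so the second property of $f$ gives $f(F_0) \subseteq f(F'_0)$. Also $|F'_0| = \rho_{\Gamma_1}(F_0 \cup F'_0) \ge |F_0|$, so $|F'| < |F|$ forces $\sum_{i \ge 1}|F'_i| < \sum_{i \ge 1}|F_i|$. Since $F_i = \emptyset$ for every $i \notin f(F_0)$, since $F'_i = \emptyset$ for every $i \notin f(F'_0)$, and since $f(F_0) \subseteq f(F'_0)$, this inequality reduces to $\sum_{i \in f(F_0)}|F'_i| < \sum_{i \in f(F_0)}|F_i|$, whence some $j \in f(F_0)$ has $|F'_j| < |F_j|$. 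Applying axiom (G2) for $\Gamma_2$ to $(F_j, F'_j)$ gives $x \in F_j - F'_j \subseteq F - F'$ with $F'_j \cup \{x\}$ feasible in $\Gamma_2$, and then $G := F' \cup \{x\}$ satisfies all three feasibility conditions, the third one because $j \in f(F_0) \subseteq f(F'_0)$ together with the third condition for $F'$.

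The one genuinely delicate step is this second case: when $|F'_0| \ge |F_0|$ and $F'$ admits no enlargement within $E_0$, one must know that among the copies $E_j$ with $j \in f(F'_0)$ there is one in which $F_j$ strictly dominates $F'_j$, so that axiom (G2) for $\Gamma_2$ can be invoked there without violating the third feasibility condition for $\Gamma$. This is exactly where the second axiom for attachment functions enters, applied through the observation that a maximal feasible subset of a set lies in --- and hence closes up to --- that set. Everything else is routine bookkeeping with the three feasibility conditions.
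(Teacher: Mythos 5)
Your proof is correct and follows essentially the same route as the paper's: axiom (G1) is immediate, and for (G2) both arguments split on whether $F'\cap E_0$ can be augmented from $F\cap E_0$ in $\Gamma_1$ (your maximality condition is equivalent to the paper's condition $F\cap E_0\subseteq\sigma_{\Gamma_1}(F'\cap E_0)$), using the second attachment-function axiom in the first case to show the new element does not shrink $f$, and in the second case to locate a copy $E_j$ with $j\in f(F\cap E_0)\subseteq f(F'\cap E_0)$ where $\Gamma_2$'s augmentation axiom applies. No gaps.
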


\begin{proof}
We use the notation defined above to describe the ground set of $\Gamma_1 \sim_f \Gamma_2$.
Clearly the empty set is feasible in $\Gamma_1 \sim_f \Gamma_2$. Suppose that $F_1$ and $F_2$ are feasible sets in
$\Gamma_1 \sim_f \Gamma_2$ with $|F_2|>|F_1|$. If there is an element $e$ of $F_2 \cap E_0$ which is not in $\sigma_{\Gamma_1} (F_1 \cap E_0)$ then $(F_1 \cap E_0) \cup e$ is feasible in $\Gamma_1$. Moreover $F_1 \cap E_0 \subseteq \sigma_{\Gamma_1} ((F_1 \cap E_0) \cup e)$, so $f(F_1 \cap E_0) \subseteq f((F_1 \cap E_0)\cup e)$. Consequently $F_1 \cup e$ is feasible in $\Gamma_1 \sim_f \Gamma_2$.

On the other hand, suppose that $F_2 \cap E_0 \subseteq \sigma_{\Gamma_1}(F_1 \cap E_0)$. Then $f(F_2 \cap E_0) \subseteq f(F_1 \cap E_0)$.
Moreover, as there is no element $e$ of $(F_2 \cap E_0)-(F_1 \cap E_0)$ such that $(F_1 \cap E_0)\cup e$ is feasible, we have
$|F_2 \cap E_0|\leq |F_1 \cap E_0|$. So for some $i$ in $f(F_2 \cap E_0)$, we have $|F_2 \cap E_i| > |F_1 \cap E_i|$. Thus there exists $e \in (F_2-F_1)\cap E_i$ such that $(F_1 \cap E_i) \cup e$ is feasible in $\Gamma_2$. As $i\in f(F_2 \cap E_0)$, we have $i \in f(F_1 \cap E_0)$.
Hence $F_1 \cup e$ is feasible in $\Gamma_1 \sim_f \Gamma_2$.
\end{proof}

Every greedoid $\Gamma$ has an attachment function formed by setting $f(F)=[|F|]$ for each feasible set $F$. However there are other examples of attachment functions. Let $G$ be a connected rooted graph in which the vertices other than the root are labelled $v_1,\ldots, v_{\rho}$. There is an attachment function $f$ defined on $\Gamma(G)$ as follows. For every feasible set $F$, define $f(F)$ so that $i\in f(F)$ if and only if $v_i$ is in the root component of $G|F$. It is straightforward to verify that $f$ is indeed an attachment function. Furthermore if $H$ is another rooted graph then $\Gamma(G\sim H)= \Gamma(G)\sim_f \Gamma(H)$.

We now consider the rank function of $\Gamma = \Gamma_1 \sim_f \Gamma_2$. We keep the same notation as above for the elements of $\Gamma$. Let $A$ be a subset of $E(\Gamma)$ and let $F$ be a maximal feasible subset of $A\cap E_0$. Then
\begin{equation} \label{eq:rankattach}  \rho_{\Gamma}(A) = \rho_{\Gamma_1}(A \cap E_0) + \sum_{i \in f(F)} \rho_{\Gamma_2}(A \cap E_i).\end{equation}
Observe that the number of subsets of $E(\Gamma)$ with specified rank, size and intersection with $E_0$ does not depend on the choice of $f$. Consequently the Tutte polynomial of $\Gamma_1 \sim_f \Gamma_2$ does not depend on $f$. We now make this idea more precise by establishing an expression for the Tutte polynomial of an attachment.

\begin{theorem}
\label{attachment function greedoids}
Let $\Gamma_1$ and $\Gamma_2$ be greedoids, and let $f$ be an attachment function for $\Gamma_1$. Then the Tutte polynomial of $\Gamma_1 \sim_f \Gamma_2$ is given by
\[ T(\Gamma_1 \sim_f \Gamma_2;x,y) = T(\Gamma_2; x,y)^{\rho(\Gamma_1)} T\Big(\Gamma_1;\frac{(x-1)^{\rho(\Gamma_2)+1}y^{|E(\Gamma_2)|}}{T(\Gamma_2;x,y)}+1,y\Big),\]
providing $T(\Gamma_2;x,y)\ne 0$.
\end{theorem}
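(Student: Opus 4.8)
The plan is to expand $T(\Gamma_1\sim_f\Gamma_2;x,y)$ directly from the definition of the greedoid Tutte polynomial and reorganise the sum according to how a subset meets the blocks of the ground set. Write $\rho=\rho(\Gamma_1)$, put $\Gamma=\Gamma_1\sim_f\Gamma_2$, and let $E_0,E_1,\dots,E_\rho$ be the ground set blocks as in the construction, so each $E_i$ with $i\ge 1$ is a copy of $E(\Gamma_2)$. Every $A\subseteq E(\Gamma)$ decomposes uniquely as $A=A_0\cup A_1\cup\cdots\cup A_\rho$ with $A_i=A\cap E_i$, and I would rewrite
\[ T(\Gamma;x,y)=\sum_{A_0\subseteq E_0}\ \sum_{A_1\subseteq E_1}\cdots\sum_{A_\rho\subseteq E_\rho}(x-1)^{\rho(\Gamma)-\rho_\Gamma(A)}(y-1)^{|A|-\rho_\Gamma(A)}, \]
the goal being to carry out the $\rho$ inner summations over $A_1,\dots,A_\rho$ in closed form.

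The key input is the rank formula. By Lemma~\ref{lem:attachment}, for a fixed $A_0$ all maximal feasible subsets $F$ of $A_0$ share the same value $f(F)$; write $f(A_0)$ for it, and note $|f(A_0)|=\rho_{\Gamma_1}(A_0)=:r$. Then Equation~\eqref{eq:rankattach} gives $\rho_\Gamma(A)=r+\sum_{i\in f(A_0)}\rho_{\Gamma_2}(A_i)$, and taking $A=E(\Gamma)$ gives $\rho(\Gamma)=\rho+\rho\,\rho(\Gamma_2)$. I would then split both exponents over the three kinds of block: $E_0$ contributes $\rho-r$ to the $(x-1)$-exponent and $|A_0|-r$ to the $(y-1)$-exponent; each block with $i\in f(A_0)$ contributes $\rho(\Gamma_2)-\rho_{\Gamma_2}(A_i)$ and $|A_i|-\rho_{\Gamma_2}(A_i)$ respectively; and each of the $\rho-r$ ``switched-off'' blocks with $i\notin f(A_0)$ contributes the full $\rho(\Gamma_2)$ to the $(x-1)$-exponent (it is active in a basis of $\Gamma$ but contributes nothing to $\rho_\Gamma(A)$) and just $|A_i|$ to the $(y-1)$-exponent.

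With the exponents split this way the $\rho$ inner sums become independent. For $i\in f(A_0)$ the factor is $\sum_{A_i\subseteq E_i}(x-1)^{\rho(\Gamma_2)-\rho_{\Gamma_2}(A_i)}(y-1)^{|A_i|-\rho_{\Gamma_2}(A_i)}=T(\Gamma_2;x,y)$, and there are $r$ of these; for $i\notin f(A_0)$ the factor is $(x-1)^{\rho(\Gamma_2)}\sum_{A_i\subseteq E_i}(y-1)^{|A_i|}=(x-1)^{\rho(\Gamma_2)}y^{|E(\Gamma_2)|}$, and there are $\rho-r$ of these. Multiplying everything together, the $(x-1)$-powers $(x-1)^{\rho-r}$ and $(x-1)^{\rho(\Gamma_2)(\rho-r)}$ combine to $(x-1)^{(\rho-r)(\rho(\Gamma_2)+1)}$, and after factoring $T(\Gamma_2;x,y)^{\rho}$ out of the remaining sum over $A_0$ I am left with
\[ T(\Gamma;x,y)=T(\Gamma_2;x,y)^{\rho}\sum_{A_0\subseteq E_0}\left(\frac{(x-1)^{\rho(\Gamma_2)+1}y^{|E(\Gamma_2)|}}{T(\Gamma_2;x,y)}\right)^{\rho-\rho_{\Gamma_1}(A_0)}(y-1)^{|A_0|-\rho_{\Gamma_1}(A_0)}, \]
which is exactly $T(\Gamma_2;x,y)^{\rho(\Gamma_1)}\,T(\Gamma_1;X,y)$ with $X=\frac{(x-1)^{\rho(\Gamma_2)+1}y^{|E(\Gamma_2)|}}{T(\Gamma_2;x,y)}+1$; the hypothesis $T(\Gamma_2;x,y)\ne0$ is precisely what makes this last step legitimate.

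The main obstacle is the rank bookkeeping rather than the arithmetic: one must be sure that Equation~\eqref{eq:rankattach} holds for arbitrary, not necessarily feasible, $A$ — in particular that a block $E_i$ with $i\notin f(A_0)$ contributes nothing to $\rho_\Gamma(A)$ even when $A_i\ne\emptyset$, which is what creates the asymmetric factor $(x-1)^{\rho(\Gamma_2)}y^{|E(\Gamma_2)|}$ — and that $f(A_0)$ is well defined independently of the chosen maximal feasible subset, which is the content of Lemma~\ref{lem:attachment}. As in the proof of Theorem~\ref{greedoid thickening}, one can read the identity as an equality of rational functions in $x$ and $y$ valid wherever $T(\Gamma_2;x,y)\ne0$; the potentially troublesome values $x=1$ and $y=1$ cause no difficulty, since no step above divides by $x-1$ or $y-1$.
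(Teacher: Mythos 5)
Your proposal is correct and follows essentially the same route as the paper's own proof: both expand the definition, decompose each subset over the blocks $E_0,E_1,\dots,E_{\rho(\Gamma_1)}$, use Lemma~\ref{lem:attachment} to make $f(A_0)$ well defined and Equation~\ref{eq:rankattach} to split the rank, and then evaluate the inner sums as $T(\Gamma_2;x,y)$ for the active blocks and $(x-1)^{\rho(\Gamma_2)}y^{|E(\Gamma_2)|}$ for the switched-off ones before factoring out $T(\Gamma_2;x,y)^{\rho(\Gamma_1)}$. All the bookkeeping checks out, so nothing further is needed.
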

\begin{proof}
Let $\Gamma = \Gamma_1 \sim_f \Gamma_2$.
We use the notation defined above to describe the ground set of $\Gamma$. It is useful to extend the definition of the attachment function $f$ to all subsets of $E_0$ by setting $f(A)$ to be equal to $f(F)$ where $F$ is a maximal feasible set of $A$.
Lemma~\ref{lem:attachment} ensures that extending $f$ in this way is well-defined.
It follows from Equation~\ref{eq:rankattach} that $\rho(\Gamma) = \rho(\Gamma_1)(\rho(\Gamma_2)+1)$. We have
\begin{align*}
T(\Gamma;x,y) &= \sum_{A\subseteq E(\Gamma)} (x-1)^{\rho(\Gamma)-\rho_{\Gamma}(A)} (y-1)^{|A|-\rho(A)}\\
&= \sum_{A_0 \subseteq E_0} (x-1)^{\rho(\Gamma_1) - \rho_{\Gamma_1}(A_0)}(y-1)^{|A_0|-\rho_{\Gamma_1}(A_0)}\cdot \prod_{i\notin f(A_0)} \sum_{A_i \subseteq E_i} (x-1)^{\rho(\Gamma_2)} (y-1)^{|A_i|}\\& \phantom{=}\ \cdot \prod_{i \in f(A_0)} \sum_{A_i \subseteq E_i} (x-1)^{\rho(\Gamma_2)- \rho_{\Gamma_2}(A_i)} (y-1)^{|A_i|-\rho_{\Gamma_2}(A_i)} \\
&= \sum_{A_0 \subseteq E_0} (x-1)^{\rho(\Gamma_1) - \rho_{\Gamma_1}(A_0)} (T(\Gamma_2;x,y))^{\rho_{\Gamma_1}(A_0)}\\
& \phantom{=} \ \cdot
\big((x-1)^{\rho(\Gamma_2)}
y^{|E(\Gamma_2)|}\big)^{\rho(\Gamma_1)-\rho_{\Gamma_1}(A_0)} (y-1)^{|A_0|-\rho_{\Gamma_1}(A_0)}\\
&= (T(\Gamma_2;x,y))^{\rho(\Gamma_1)} \sum_{A_0 \subseteq E_0} (y-1)^{|A_0|-\rho_{\Gamma_1}(A_0)} \Big(\frac{(x-1)^{\rho(\Gamma_2)+1}y^{|E(\Gamma_2)|}}{T(\Gamma_2;x,y)}\Big)^{\rho(\Gamma_1)-\rho_{\Gamma_1}(A_0)}\\
&= T(\Gamma_2; x,y)^{\rho(\Gamma_1)} T\Big(\Gamma_1;\frac{(x-1)^{\rho(\Gamma_2)+1}y^{|E(\Gamma_2)|}}{T(\Gamma_2;x,y)}+1,y\Big).
\end{align*}
\end{proof}

The third construction is called the full rank attachment. Given greedoids $\Gamma_1=(E_1,\mathcal F_1)$ and $\Gamma_2=(E_2, \mathcal F_2)$ with disjoint ground sets, the \emph{full rank attachment of $\Gamma_2$ to $\Gamma_1$} denoted by $\Gamma_1 \approx \Gamma_2$ has ground set $E_1 \cup E_2$ and a set $F$ of elements is feasible if either of the two following conditions holds.
\begin{enumerate}
\item $F \in \mathcal F_1$;
\item $F \cap E_1 \in \mathcal F_1$, $F\cap E_2 \in \mathcal F_2$ and $\rho_{\Gamma_1}(F\cap E_1) = \rho(\Gamma_1)$.
\end{enumerate}
It is straightforward to prove that $\Gamma_1 \approx \Gamma_2$ is a greedoid.

Suppose that $\Gamma=\Gamma_1\approx \Gamma_2$ and that $A$ is a subset of $E(\Gamma)$. Then
\[ \rho(A) = \begin{cases} \rho(A \cap E_1) & \text{if $\rho(A \cap E_1)< \rho(\Gamma_1)$,}\\
\rho(A \cap E_1) + \rho(A\cap E_2) & \text{if $\rho(A \cap E_1)= \rho(\Gamma_1)$.}
\end{cases}\]
This observation enables us to prove the following identity for the Tutte polynomial.

\begin{theorem}\label{thm:fullrankattach}
Let $\Gamma_1$ and $\Gamma_2$ be greedoids, and let $\Gamma=\Gamma_1 \approx \Gamma_2$.
Let $E$, $E_1$ and $E_2$ denote the ground sets of $\Gamma$, $\Gamma_1$ and $\Gamma_2$ respectively.
Then
\[T(\Gamma_1\approx\Gamma_2;x,y)\\ = T(\Gamma_1;x,y)(x-1)^{\rho(\Gamma_2)}y^{|E_2|} + T(\Gamma_1;1,y) ( T(\Gamma_2;x,y) - (x-1)^{\rho(\Gamma_2)}y^{|E_2|}).\]
\end{theorem}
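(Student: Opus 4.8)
The plan is to expand the defining sum for $T(\Gamma;x,y)$ directly, writing each subset $A\subseteq E$ as $A=A_1\cup A_2$ with $A_1=A\cap E_1$ and $A_2=A\cap E_2$, and feeding in the case analysis for $\rho(A)$ recorded just before the statement. First I would observe that $\rho(\Gamma)=\rho(\Gamma_1)+\rho(\Gamma_2)$: applying the rank formula with $A=E$ uses that $\rho(E_1)=\rho(\Gamma_1)$, so $\rho(E)=\rho(\Gamma_1)+\rho(E_2)$. Then I would split the outer sum according to whether $\rho(A_1)<\rho(\Gamma_1)$ or $\rho(A_1)=\rho(\Gamma_1)$.

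For the terms with $\rho(A_1)=\rho(\Gamma_1)$ we have $\rho(A)=\rho(\Gamma_1)+\rho(A_2)$, hence $\rho(\Gamma)-\rho(A)=\rho(\Gamma_2)-\rho(A_2)$ and $|A|-\rho(A)=\bigl(|A_1|-\rho(\Gamma_1)\bigr)+\bigl(|A_2|-\rho(A_2)\bigr)$. The contribution therefore factors as the product of $\sum_{A_1:\,\rho(A_1)=\rho(\Gamma_1)}(y-1)^{|A_1|-\rho(\Gamma_1)}$ and $\sum_{A_2\subseteq E_2}(x-1)^{\rho(\Gamma_2)-\rho(A_2)}(y-1)^{|A_2|-\rho(A_2)}=T(\Gamma_2;x,y)$. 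The key observation, which I would isolate as a short remark, is that the first factor equals $T(\Gamma_1;1,y)$: setting $x=1$ in the definition of $T(\Gamma_1)$ annihilates every term except those with $\rho(A_1)=\rho(\Gamma_1)$, for which the surviving exponent of $(y-1)$ is $|A_1|-\rho(\Gamma_1)$. So these terms contribute $T(\Gamma_1;1,y)\,T(\Gamma_2;x,y)$.

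For the terms with $\rho(A_1)<\rho(\Gamma_1)$ we have $\rho(A)=\rho(A_1)$, and $\sum_{A_2\subseteq E_2}(y-1)^{|A_2|}=y^{|E_2|}$; pulling the factor $(x-1)^{\rho(\Gamma_2)}$ out of $(x-1)^{\rho(\Gamma)-\rho(A_1)}$, these terms contribute $(x-1)^{\rho(\Gamma_2)}y^{|E_2|}\sum_{A_1:\,\rho(A_1)<\rho(\Gamma_1)}(x-1)^{\rho(\Gamma_1)-\rho(A_1)}(y-1)^{|A_1|-\rho(A_1)}$. By the same observation, the inner sum is $T(\Gamma_1;x,y)-T(\Gamma_1;1,y)$, since the full-rank terms of $T(\Gamma_1;x,y)$ are precisely $T(\Gamma_1;1,y)$. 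Adding the two contributions and regrouping yields exactly the claimed identity.

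I do not expect a genuine obstacle here: once the rank formula is in hand the argument is bookkeeping, and unlike Theorems~\ref{greedoid thickening} and \ref{attachment function greedoids} it involves no division, so no boundary case such as $y=\pm1$ needs separate treatment. The only point deserving a word of care is the identification of $\sum_{A_1:\,\rho(A_1)=\rho(\Gamma_1)}(y-1)^{|A_1|-\rho(\Gamma_1)}$ with $T(\Gamma_1;1,y)$, which follows from reading $0^0$ as $1$ and $0^{k}$ as $0$ for $k\geq1$ in the definition, or equivalently from viewing the desired equation as an identity of polynomials in $x$, verifying it for all $x\neq1$, and extending by continuity.
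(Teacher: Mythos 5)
Your proposal is correct and follows essentially the same route as the paper: split the sum over $A\subseteq E$ according to whether $\rho(A\cap E_1)$ attains $\rho(\Gamma_1)$, use the rank formula stated before the theorem, and identify the full-rank part of $T(\Gamma_1;x,y)$ with $T(\Gamma_1;1,y)$. The only (immaterial) difference is the final regrouping — you write the deficient-rank sum as $T(\Gamma_1;x,y)-T(\Gamma_1;1,y)$, whereas the paper completes it to $T(\Gamma_1;x,y)$ and compensates in the other term — and your remark about the $0^0$ convention makes explicit a point the paper leaves implicit.
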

\begin{proof}
We have
\begin{align*}
\MoveEqLeft{T(\Gamma_1 \approx \Gamma_2;x,y)}\\
&= \sum_{A\subseteq E} (x-1)^{\rho(\Gamma)-\rho_{\Gamma}(A)}(y-1)^{|A|-\rho_{\Gamma}(A)}\\
&= \sum_{\substack{A_1 \subseteq E_1:\\ \rho_{\Gamma_1}(A_1)<\rho(\Gamma_1)}} (x-1)^{\rho(\Gamma_1)-\rho_{\Gamma_1}(A_1)} (y-1)^{|A_1|-\rho_{\Gamma_1}(A_1)} \sum_{A_2 \subseteq E_2} (x-1)^{\rho(\Gamma_2)} (y-1)^{|A_2|}\\
 &\phantom{=} \  { }+
\sum_{\substack{A_1 \subseteq E_1:\\ \rho_{\Gamma_1}(A_1)=\rho(\Gamma_1)}}  (y-1)^{|A_1|-\rho_{\Gamma_1}(A_1)}\sum_{A_2 \subseteq E_2} (x-1)^{\rho(\Gamma_2)-\rho_{\Gamma_2}(A_2)} (y-1)^{|A_2|-\rho_{\Gamma_2}(A_2)}\\
&= \sum_{A_1\subseteq E_1} (x-1)^{\rho(\Gamma_1)-\rho_{\Gamma_1}(A_1)} (y-1)^{|A_1|-\rho_{\Gamma_1}(A_1)} (x-1)^{\rho(\Gamma_2)}y^{|E_2|}\\
 &\phantom{=} \ { }+
\sum_{\substack{A_1 \subseteq E_1:\\
\rho_{\Gamma_1}(A_1)=\rho(\Gamma_1)}}
(y-1)^{|A_1|-\rho_{\Gamma_1}(A_1)} \\ &\phantom{=} \ \cdot
\Big(\sum_{A_2 \subseteq E_2} (x-1)^{\rho(\Gamma_2)-\rho_{\Gamma_2}(A_2)} (y-1)^{|A_2|-\rho_{\Gamma_2}(A_2)}-
(x-1)^{\rho(\Gamma_2)}y^{|E_2|}\Big)\\&= T(\Gamma_1;x,y)(x-1)^{\rho(\Gamma_2)}y^{|E_2|} +
T(\Gamma_1;1,y)\big(T(\Gamma_2;x,y) -(x-1)^{\rho(\Gamma_2)}y^{|E_2|}\big).
\end{align*}
\end{proof}
This construction will be useful later in Section~\ref{sec:binarygreedoids} when $\Gamma_1$ and $\Gamma_2$ are binary greedoids with $\Gamma_1=\Gamma(M_1)$ and $\Gamma_2=\Gamma(M_2)$, where $M_1$ has full row rank. Then $\Gamma_1 \approx \Gamma_2 = \Gamma(M)$ where $M$ has the form \[M= \left(\begin{array}{c|c} M_1 & 0 \\ \hline 0 & M_2 \end{array}\right).\]

\section{Rooted Graphs}
\label{section rooted graphs hardness}

Throughout the remainder of the paper we focus on three computational problems. Let $\mathbb G$ denote either the class of branching greedoids, directed branching greedoids or binary greedoids. Our first problem is computing all the coefficients of the Tutte polynomial for a greedoid in the class $\mathbb G$.

\prob{$\pi_1[\mathbb G]$ : $\#$\textsc{Rooted Tutte Polynomial}}{$\Gamma \in \mathbb G$.}{The coefficients of $T(\Gamma;x,y)$.}

The second problem involves computing the Tutte polynomial along a plane algebraic curve $L$. We restrict our attention to the case where $L$ is a rational curve given by the parametric equations \[ x(t) = \frac{p(t)}{q(t)} \quad \text{ and } \quad y(t) = \frac{r(t)}{s(t)},\] where $p$, $q$, $r$ and $s$ are polynomials over $\mathbb{Q}$. More precisely, we compute the coefficients of the one-variable polynomial obtained by restricting $T$ to the curve $L$.

\prob{$\pi_2[\mathbb G,L]$ : $\#$\textsc{Rooted Tutte Polynomial Along $L$}}{$\Gamma \in \mathbb G$.}{The coefficients of the rational function of $t$ given by evaluating $T(\Gamma;x(t),y(t))$ along $L$.}
Most of the time, $L$ will be one of the hyperbolae $H_{\alpha}$. We will frequently make a slight abuse of notation by writing $L=H_{\alpha}$.

The final problem
is the evaluation of the Tutte polynomial at a fixed rational point $(a,b)$.
\prob{$\pi_3[\mathbb G,a,b]$ : $\#$\textsc{Rooted Tutte Polynomial At $(a,b)$}}{$\Gamma \in \mathbb G$.}{$T(\Gamma;a,b)$.}

It is straightforward to see that for each possibility for $\mathbb G$, we have \[\pi_3[\mathbb{G},a,b] \propto_T \pi_2[\mathbb{G},H_{(a-1)(b-1)}] \propto_T \pi_1[\mathbb{G}].\]
Our results in the remainder of the paper will determine when the opposite reductions hold.

In this section we prove Theorem~\ref{maintheoremrootedgraph}. We let $\mathcal{G}$ be the class of branching greedoids of connected, rooted, planar, bipartite graphs and take $\mathbb{G}=\mathcal{G}$. It is, however, more convenient to take the input to each problem to be a connected, rooted, planar, bipartite graph rather than its branching greedoid.

We begin by reviewing the exceptional points of Theorem~\ref{maintheoremrootedgraph}.
If a point $(a,b)$ lies on the hyperbola $H_1$ then, following the remarks at the end of Section~\ref{The Tutte Polynomial of a Rooted Graph and of a Rooted Digraph}, $T(G;a,b)$ is easily computed.
We noted in Section~\ref{The Tutte Polynomial of a Rooted Graph and of a Rooted Digraph} that for a connected rooted graph $G$, $T(G;1,1)$ is equal to the number of spanning trees of $G$. That this can be evaluated in polynomial time follows from Kirchhoff's Matrix--Tree theorem~\cite{Kirchhoff}.
Hence
there are polynomial time algorithms to evaluate the Tutte polynomial of a connected rooted graph at $(1,1)$ and at any point lying on $H_1$.
It is easy to extend this to all rooted graphs because every edge belonging to a component that does not include the root is a loop in the corresponding branching greedoid.

We will now review the hard points of Theorem~\ref{maintheoremrootedgraph}.
A key step in establishing the hardness part of Theorem~\ref{maintheoremrootedgraph} for points lying on the line $y=1$ is to strengthen a result of Jerrum~\cite{Jerrum}.
Given an unrooted graph $G=(V,E)$, a \emph{subtree} of $G$ is a subgraph of $G$ which is a tree. (We emphasize that the subgraph does not have to be an induced subgraph.)
Jerrum~\cite{Jerrum} showed that the following problem is $\#$P-complete.

\prob{$\#$\textsc{Subtrees}}{Planar unrooted graph $G$.}{The number of subtrees of $G$.}
Consider the restriction of this problem to bipartite planar graphs.

\prob{$\#$\textsc{Bisubtrees}}{Bipartite, planar unrooted graph $G$.}{The number of subtrees of $G$.}

We shall show that $\#$\textsc{Bisubtrees} is $\#$P-complete.
We say that an edge of a graph $G$ is \emph{external} in a subtree $T$ of $G$ if it is not contained in $E(T)$. Let $t_{i,j}(G)$ be the number of subtrees of $G$ with $i$ external edges having precisely one endvertex in $T$ and $j$ external edges having both endvertices in $T$.

Recall that the $k$-stretch of an unrooted graph $G$ is obtained by replacing each loop by a circuit with $k$ edges and every other edge by a path of length $k$. Let $t(G)$ denote the number of subtrees of $G$.

\begin{proposition}
\label{bipartite stretch}
For every unrooted graph $G$,
the number of subtrees of the $k$-stretch $G_k$ of $G$ is given by
\[ t(G_k) = \left(\sum_{i,j \geq 0}t_{i,j}(G)k^i \binom{k+1}{2}^j\right) + \frac{k(k-1)|E|}{2}.\]
\end{proposition}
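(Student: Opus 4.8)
The plan is to count subtrees of $G_k$ by classifying each according to how it sits relative to the original graph $G$. Every edge of $G$ becomes, in $G_k$, either a path of $k$ edges (if it was a non-loop) or a circuit of $k$ edges (if it was a loop). A subtree $T'$ of $G_k$ is connected and acyclic, so within each path or circuit replacing an edge of $G$, the intersection $T' \cap (\text{that path/circuit})$ is a disjoint union of subpaths. The key observation is that the "core" of $T'$ — the set of vertices of $G_k$ that are original vertices of $G$ and lie in $T'$, together with the edges of $G$ whose entire replacement path lies in $T'$ — is either a subtree $T$ of $G$, or is empty (or a single vertex), in which case $T'$ lives inside a single path or circuit.

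**The two contributions.** First I would handle the subtrees of $G_k$ that are contained inside the replacement of a single edge $e$ of $G$ and do not use both endpoints of that replacement in a way that forces passage through an original vertex on both ends; more carefully, the subtrees whose core is trivial. If $e$ is a non-loop, its replacement path $P$ has $k$ edges and $k+1$ vertices; the subtrees of $G_k$ strictly inside $P$ that are "new" (not already counted via a core subtree) turn out to contribute, after subtracting overlaps, the term $k(k-1)/2$ per edge of $G$, giving the additive $\frac{k(k-1)|E|}{2}$. (Loops as circuits of length $k$ need to be checked to give the same local count; one should verify the bookkeeping so that nothing is double counted with the $i=j=0$, single-vertex case of the main sum.) Second, for a subtree $T$ of $G$ with $i$ external edges having exactly one endvertex in $V(T)$ and $j$ external edges having both endvertices in $V(T)$, I would count the subtrees $T'$ of $G_k$ whose core is exactly $T$: along each of the $|E(T)|$ edges of $T$ we must take the whole replacement path; along each of the $i$ "half-incident" external edges we may take any initial subpath starting at the endvertex in $T$, which includes taking nothing — that is $k$ choices each (a path of length $0,1,\dots,k-1$ from the attached end, since taking the full length would reach a new original vertex and change the core); along each of the $j$ "both-ends-incident" external edges we may take an initial subpath from one end and a terminal subpath from the other, with total length at most $k-1$ to avoid closing a cycle or reaching across — this is $\binom{k+1}{2}$ choices. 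Multiplying gives $t_{i,j}(G)\, k^i \binom{k+1}{2}^j$, and summing over all subtrees $T$ of $G$ gives the main sum.

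**The main obstacle.** The delicate point is the precise case analysis ensuring the partition of subtrees of $G_k$ into "core nontrivial" and "lives in one edge-replacement" is exactly a partition — in particular pinning down the boundary cases: the empty subtree versus single-vertex subtrees, subtrees consisting of a single original vertex of $G$ (core is that vertex, all incident edges are "half-incident" giving the $i$-type factor), and subtrees inside a loop's circuit. One must check that the count $k$ for a half-incident external edge correctly excludes the full-length path (which would extend the core) and includes the empty choice, and that $\binom{k+1}{2} = \binom{k}{2} + k$ correctly enumerates the allowed pairs (initial segment of length $a$ from one end, terminal segment of length $b$ from the other, with $a+b \le k-1$, $a,b \ge 0$, together with segments that use one end only). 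I would also double-check the loop case contributes $k(k-1)/2$ to the additive term just as a non-loop does, and that circuits of length $k$ replacing loops do not create extra subtrees that wrap around. Once the partition is verified, the formula follows by the product rule over the independent choices on each edge of $G$.
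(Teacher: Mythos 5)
Your proposal is correct and follows essentially the same route as the paper: map each subtree of $G_k$ to its ``core'' (a subtree of $G$ or the empty graph), count the fibre over a core with $i$ half-incident and $j$ fully-incident external edges as $k^i\binom{k+1}{2}^j$, and account separately for the empty-core subtrees, which per edge of $G$ number $(k-1)+\binom{k-1}{2}=\tfrac{k(k-1)}{2}$ (the $k-1$ single internal vertices plus the $\binom{k-1}{2}$ internal subpaths with at least one edge). The delicate points you flag are exactly the ones the paper's proof handles, and your counts for the $k$ and $\binom{k+1}{2}$ factors agree with it.
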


\begin{proof}
Let $E(G) = \{e_1, e_2, \ldots, e_m\}$ and let $E_t$ be the set of edges replacing $e_t$ in $G_k$ for $1 \leq t \leq m$. Thus $E(G_k) = \bigcup_{t=1}^m E_t$. We can think of the vertices of $G_k$ as being of two types: those corresponding to the vertices of $G$ and the extra ones added when $G_k$ is formed.
We construct a function $f$ that maps every subtree $T$ of $G_k$ to a graph $T'$ which is
either a subtree of $G$ or an empty graph with no vertices or edges.
We let $V(T')$ comprise all the vertices of $V(T)$ corresponding to vertices in $G$. The edge set $E(T')$ is defined so that
$e_t \in E(T')$ if and only if $E_t \subseteq E(T)$.

Let $T'$ be a subtree of $G$ with at least one vertex, $i$ external edges having precisely one endvertex in $T'$ and $j$ external edges having both endvertices in $T'$.

If $T \in f^{-1}(T')$ then it must contain all of the edges in $G_k$ that replace the edges in $E(T')$.
Suppose there is an edge $e = v_1v_2$ in $G$ that is external in $T'$ with $v_1\in V(T')$ and $v_2\notin V(T')$.
Then there are $k$ possibilities for the subset of $E_t$ appearing in $T$.
Now suppose there exists an edge $e_t= v_1v_2$ in $G$ that is external in $T'$ with $v_1, v_2 \in V'$. Then there are $\binom{k+1}{2}$ choices for the subset of $E_t$ appearing in $T$.
Therefore,
\[|f^{-1}(T_{i,j}')| = k^i \binom{k+1}{2}^j.\]

It remains to count the subtrees of $G_k$ mapped by $f$ to a graph with no vertices. Such a subtree does not contain any vertices corresponding to vertices in $G$. There are $(k-1)|E(G)|$ subtrees of $G_k$ comprising a single vertex not in $V(G)$ and no edges, and $\binom{k-1}{2}|E(G)|$ subtrees of $G_k$ with at least one edge but not containing any vertex in $V(G)$. Hence
\[t(G_k) = \left(\sum_{i,j \geq 0}t_{i,j}(G)k^i \binom{k+1}{2}^j\right) + \frac{k(k-1)}{2}|E(G)|.\]
\end{proof}

We can now show that \textsc{Bisubtrees} is $\#$P-complete.

\begin{proposition}\label{prop:jerrum}
The problem \textsc{Bisubtrees} is $\#$P-complete.
\end{proposition}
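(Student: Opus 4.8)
The plan is to show $\#$\textsc{Bisubtrees} is $\#$P-complete by reducing $\#$\textsc{Subtrees} (which Jerrum showed is $\#$P-complete on planar graphs) to it, using the $k$-stretch operation. Membership in $\#$P is immediate, since a subtree can be guessed and verified in polynomial time. For hardness, observe first that if $G$ is planar then so is the $k$-stretch $G_k$, and moreover $G_k$ is bipartite as soon as $k$ is even: subdividing every edge $k-1$ times (and every loop into a circuit of length $k$) makes every cycle have length a multiple of $k$, hence even. So for each even $k$, $G_k$ is a legitimate instance of $\#$\textsc{Bisubtrees}, and an oracle for that problem returns $t(G_k)$.

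Next I would invoke Proposition~\ref{bipartite stretch}, which expresses
\[ t(G_k) = \left(\sum_{i,j\ge 0} t_{i,j}(G)\,k^i\binom{k+1}{2}^j\right) + \frac{k(k-1)|E|}{2}.\]
The term $\tfrac{k(k-1)|E|}{2}$ is known and can simply be subtracted off. What remains is, for each fixed even $k$, one linear equation in the unknowns $t_{i,j}(G)$ with coefficients $k^i\binom{k+1}{2}^j$. By running the oracle on $G_k$ for sufficiently many even values of $k$ — say $k = 2,4,\dots, 2N$ for $N$ larger than the number of distinct pairs $(i,j)$ with $t_{i,j}(G)\neq 0$ (and note $i$ and $j$ are each bounded by $|E(G)|$, so $N$ is polynomial in the size of $G$) — I obtain a linear system whose coefficient matrix is essentially a two-dimensional Vandermonde-type matrix in the variables $k$ and $\binom{k+1}{2}$. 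The key point is that this matrix is nonsingular: the monomials $k^i\binom{k+1}{2}^j$, viewed as polynomials in $k$, have pairwise distinct degrees $i+2j$ only up to the usual collisions, so one instead groups terms by the value of $\binom{k+1}{2}$ and argues invertibility via the standard argument that a nonzero polynomial combination cannot vanish at too many points; alternatively one fixes this by choosing the evaluation points and an explicit ordering of the monomials so the system is triangular-plus-Vandermonde. Solving this system (in polynomial time, e.g.\ by Lemma~\ref{lem:gauss}, since all entries and the right-hand side are integers of polynomially many bits) recovers every $t_{i,j}(G)$, and hence $t(G) = \sum_{i,j} t_{i,j}(G)$, the number of subtrees of the original planar graph $G$.

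The main obstacle is the invertibility of the interpolation system: unlike the one-variable thickening arguments, here the coefficients $k^i\binom{k+1}{2}^j$ are bivariate and different pairs $(i,j)$ can a priori give the same total degree in $k$, so one cannot just cite a plain Vandermonde determinant. I expect the cleanest fix is to treat $\binom{k+1}{2}$ as the primary variable: for a fixed power $\binom{k+1}{2}^j$, the inner sum $\sum_i t_{i,j}(G)k^i$ is a polynomial in $k$ of degree at most $|E(G)|$, and standard polynomial interpolation together with the fact that $k\mapsto\binom{k+1}{2}$ is injective on positive integers lets one disentangle the two indices by a two-stage interpolation — first recovering, for each even $k$, the aggregated quantities as polynomials evaluated at $\binom{k+1}{2}$, then interpolating in $k$. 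One must also verify that restricting to \emph{even} $k$ still leaves enough evaluation points, which it does since we may take $k\in\{2,4,\dots\}$ arbitrarily large. Everything else — planarity and bipartiteness of $G_k$, the bit-size bounds, and the final summation — is routine.
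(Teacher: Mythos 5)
Your overall reduction is the right one and matches the paper's: membership in $\#$P is clear, the even $k$-stretches $G_k$ are planar and bipartite, and Proposition~\ref{bipartite stretch} supplies the formula for $t(G_k)$. But the interpolation step as you describe it cannot work, and you have correctly sensed the difficulty without resolving it. Recovering the individual coefficients $t_{i,j}(G)$ from the numbers $t(G_k)$ is impossible in principle, not merely awkward: the functions $k\mapsto k^i\binom{k+1}{2}^j$ are linearly \emph{dependent} as polynomials in $k$ (for instance $2\binom{k+1}{2}=k^2+k$, so the monomials for $(i,j)=(0,1)$, $(2,0)$ and $(1,0)$ satisfy a nontrivial linear relation), so the linear system for the $t_{i,j}$ is singular no matter how many values of $k$ you query. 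Both of your proposed fixes founder on exactly this point: the ``a nonzero polynomial combination cannot vanish at too many points'' argument fails because the relevant combination \emph{is} identically zero, and the two-stage interpolation is unavailable because $k$ and $\binom{k+1}{2}$ are functions of the same single parameter and cannot be varied independently.

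The repair is simpler than anything you attempt, and is what the paper does: you do not need the individual $t_{i,j}(G)$, only $t(G)=t(G_1)$. Since $t_{i,j}(G)>0$ forces $i+j\le|E(G)|$ and hence $i+2j\le 2|E(G)|$, Proposition~\ref{bipartite stretch} shows that $t(G_k)$ is a single univariate polynomial in $k$ of degree at most $2|E(G)|$. Querying the oracle at the $2|E(G)|+1$ even values $k=2,4,\ldots,4|E(G)|+2$ gives a genuine univariate Vandermonde system for its coefficients, solvable in polynomial time by Lemma~\ref{lem:gauss}, and evaluating the recovered polynomial at $k=1$ yields $t(G)$. With that substitution your argument goes through; the rest of what you write (bipartiteness and planarity of $G_k$ for even $k$, bit-size bounds, membership in $\#$P) is fine.
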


\begin{proof}
It is clear that \textsc{Bisubtrees} belongs to $\#$P.
To establish hardness, first note that $G_2, \ldots, G_{4|E(G)|+2}$ are all bipartite and may be constructed from $G$ in polynomial time.
We have $\max_{i,j \geq 0}\{i+2j: t_{i,j}(G)>0\} \leq \max_{i,j \geq 0}\{i+2j: i+j \leq |E(G)|\} =2{|E(G)|}$. Therefore, by Proposition~\ref{bipartite stretch}, $t(G_k)$ is a polynomial in $k$ of degree at most $2|E(G)|$. So we can write \[t(G_k) = \sum_{p=0}^{2|E(G)|}a_pk^p.\]
Thus, if we compute $t(G_k)$ for $k=2,\ldots, 4|E(G)|+2$, then we can apply Lemma~\ref{lem:gauss} to recover $a_i$ for all $i$
and then determine $t(G)=t(G_1)$ in polynomial time. Therefore we have shown that \textsc{Subtrees} $\propto_T$ \textsc{Bisubtrees}.
\end{proof}

We now present three propositions which together show that at most fixed rational points $(a,b)$, evaluating the Tutte polynomial of a connected, bipartite, planar, rooted graph at $(a,b)$ is just as hard as evaluating it along the curve $H_{(a-1)(b-1)}$. The $k$-thickening operation is crucial. Notice that $\Gamma(G^k)\cong (\Gamma(G))^k$, so we may apply Theorem~\ref{greedoid thickening} to obtain an expression for $T(G^k)$.
The first proposition deals with the case when $a \neq 1$ and $b \notin \{-1,0,1\}$.

\begin{proposition}
\label{main proposition 1}
Let $L = H_{\alpha}$ for some $\alpha \in \mathbb{Q} - \{0\}$. Let $(a,b)$ be a point on $L$ such that $b \notin \{-1,0\}$. Then
\[ \pi_2[\mathcal{G},L] \propto_T \pi_3[\mathcal{G},a,b].\]
\end{proposition}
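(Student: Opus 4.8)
The plan is to use the $k$-thickening operation together with Theorem~\ref{greedoid thickening} to reduce evaluation along $L=H_\alpha$ to a sequence of point evaluations at points of the form $(a_k,b_k)$ lying on $L$, and then to recover the coefficients of the one-variable restriction of $T$ to $L$ by polynomial interpolation via Lemma~\ref{lem:gauss}. First I would fix a point $(a,b)$ on $L$ with $b\notin\{-1,0\}$ and observe that, since $\Gamma(G^k)\cong(\Gamma(G))^k$, Theorem~\ref{greedoid thickening} gives
\[
T(G^k;a,b) = (1+b+\cdots+b^{k-1})^{\rho(G)}\,T\!\left(G;\frac{a+b+\cdots+b^{k-1}}{1+b+\cdots+b^{k-1}},b^k\right),
\]
valid because $b\neq-1$. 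Writing $a_k$ and $b_k$ for the two arguments on the right, one checks that $(a_k-1)(b_k-1)=(a-1)(b-1)=\alpha$, so each $(a_k,b_k)$ lies on $L$; moreover $G^k$ is still connected, planar and bipartite (replacing each edge by $k$ parallel edges preserves bipartiteness and planarity), so $G^k\in\mathcal G$ and it can be built from $G$ in polynomial time. Hence an oracle for $\pi_3[\mathcal G,a,b]$ lets us compute $T(G;a_k,b_k)$ for $k=1,2,3,\dots$ in polynomial time, after dividing out the easily computed prefactor $(1+b+\cdots+b^{k-1})^{\rho(G)}$ (which is nonzero since $b\neq-1$ forces $1+\cdots+b^{k-1}\neq 0$ for the relevant $k$; the one case $b=1$ must be handled using the $y=1$ form of Theorem~\ref{greedoid thickening}, and there $b_k=1$ throughout, which still suffices provided the $a_k$ are distinct).

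Next I would parametrise $L$. Since $(x-1)(y-1)=\alpha$, we can take $x=t+1$ and $y=\alpha/t+1$, so that $T(G;x(t),y(t))$ becomes a rational function of $t$; clearing the denominator $t^{\,|E(G)|}$ (or an explicit power of $t$ determined by the rank and size, bounded polynomially in the size of $G$) turns it into a polynomial in $t$ of degree bounded by a polynomial in $|E(G)|$. Each point $(a_k,b_k)$ corresponds to a specific value $t_k$ of the parameter, and the crucial point is that these $t_k$ are \emph{distinct} for infinitely many $k$: as $k$ varies, $b_k=b^k$ takes infinitely many values (using $b\notin\{-1,0,1\}$), hence so does $t_k=(b_k-1)$-related quantity, so we obtain enough distinct evaluation points. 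Evaluating at polynomially many such $k$ and applying Lemma~\ref{lem:gauss} to the resulting Vandermonde-type system recovers all coefficients of the polynomial, hence solves $\pi_2[\mathcal G,L]$.

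The main obstacle is the bookkeeping around degenerate cases and the distinctness of the sampled points: one must verify that the prefactors $(1+b+\cdots+b^{k-1})^{\rho(G)}$ never vanish for the values of $k$ used (automatic when $b\ne\pm1$; when $b=1$ the prefactor is $k^{\rho(G)}\ne 0$), that the arguments $a_k$ (when $b=1$) or the parameters $t_k$ (when $b\ne1$) are pairwise distinct so the interpolation system is solvable, and that enough distinct sample points exist within a polynomial range of $k$ to determine a polynomial whose degree is itself only polynomial in $|E(G)|$. The hypothesis $b\notin\{-1,0\}$ is exactly what is needed to avoid the collapse of the thickening identity ($b=-1$) and to keep $b^k$ well-behaved and nonzero; the residual subtlety is confirming that when $b=1$ the maps $k\mapsto a_k$ are injective, which follows from a short computation. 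I expect the bulk of the remaining work to be this case analysis rather than anything conceptually deep.
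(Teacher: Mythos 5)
Your proposal is correct and follows essentially the same route as the paper: $k$-thickening plus Theorem~\ref{greedoid thickening}, distinctness of the evaluation points on $H_\alpha$ coming from $b\notin\{-1,0,1\}$, and recovery of the coefficients of the one-variable (Laurent) polynomial of degree span $|E(G)|+\rho(G)+1$ by solving a Vandermonde system via Lemma~\ref{lem:gauss}. The only superfluous step is your side-case $b=1$, which cannot occur here since $(a,b)\in H_\alpha$ with $\alpha\neq 0$ forces $b\neq 1$ --- the paper notes this at the outset and parametrises by $z=y-1$ rather than your $t=x-1$, an immaterial difference.
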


\begin{proof}
For a point $(x,y)$ on $L$ we have $y\neq 1$. Therefore $z=y-1 \neq 0$ and so $\alpha / z = x-1$. Let $G$ be in $\mathcal{G}$.
Along $L$ the Tutte polynomial of $G$ has the form
\[T(G;x,y) = T(G; 1+ \alpha / z, 1+z) = \sum_{A \subseteq E(G)}\left(\frac{\alpha}{z}\right)^{\rho(G)-\rho(A)}z^{|A|-\rho(A)} = \sum_{i = -\rho(G)}^{|E(G)|}t_i z^i,\]
for some $t_{-\rho(G)}, \ldots , t_{|E(G)|}$.

We now show that we can determine all of the coefficients $t_i$ from the evaluations $T(G^k;a,b)$ for $k= 1,\ldots, |E(G)|+\rho(G)+1$
in time polynomial in $|E(G)|$. For each such $k$, $G^k$ may be constructed from $G$ in time polynomial in $|E(G)|$ and is bipartite, planar and connected.
By Theorem~\ref{greedoid thickening}, we have
\[T(G^k;a,b) = (1+b+\ldots + b^{k-1})^{\rho(G)}T\left(G; \frac{a+b+\ldots + b^{k-1}}{1+b+\ldots + b^{k-1}},b^k\right).\]

Since $b \neq -1$, we have $1+b+\ldots + b^{k-1} \neq 0$. Therefore we may compute \[T\left(G; \frac{a+b+\ldots + b^{k-1}}{1+b+\ldots + b^{k-1}},b^k\right)\] from $T(G^k;a,b)$.
The point $\left(\frac{a+b+\ldots + b^{k-1}}{1+b+\ldots + b^{k-1}},b^k\right)$ will also be on the curve $L$ since
\[\left(\frac{a+b+\ldots + b^{k-1}}{1+b+\ldots + b^{k-1}}-1\right)(b^k-1) = (a-1)(b-1).\]
As $b \notin \{-1,0,1\}$, for $k=1,2, \ldots, |E(G)|+\rho(G)+1$, the points $\left(\frac{a+b+\ldots + b^{k-1}}{1+b+\ldots + b^{k-1}},b^k\right)$
are pairwise distinct.
Therefore by evaluating $T(G^k;a,b)$ for $k=1,\ldots, |E(G)|+\rho(G)+1$, we obtain $\sum_{i = -\rho(G)}^{|E(G)|}t_iz^i$ for $|E(G)|+\rho(G)+1$ distinct values of $z$. This gives us $|E(G)|+\rho(G)+1$ linear equations for the coefficients $t_i$. The matrix of the equations is a Vandermonde matrix and clearly non-singular. So, we may apply Lemma~\ref{lem:gauss} to compute $t_i$ for all $i$ in time polynomial in $|E(G)|$.
\end{proof}

The next proposition deals with the case when $a =1$. Recall $H_0^x = \{(1,y) : y \in \mathbb{Q}\}$ and $H_0^y = \{(x,1) : x \in \mathbb{Q}\}$.

\begin{proposition}
Let $L = H_0^x$ and let $b\in \mathbb{Q}-\{-1,0,1\}$. Then
\[ \pi_2[\mathcal{G},L] \propto_T \pi_3[\mathcal{G},1,b].\]
\end{proposition}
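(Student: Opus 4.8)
The plan is to follow the proof of Proposition~\ref{main proposition 1} almost verbatim, again using the $k$-thickening operation, the point being that thickening keeps us on the line $x=1$ once we start there. Let $G$ be in $\mathcal{G}$. For each $k\geq 1$ the thickening $G^k$ is again a connected, rooted, planar, bipartite graph (replacing each edge by $k$ parallel edges preserves connectedness, planarity and bipartiteness), so $G^k\in\mathcal G$, and $G^k$ can be constructed from $G$ in time polynomial in $|E(G)|$; moreover $\Gamma(G^k)\cong\Gamma(G)^k$ and $\rho(G^k)=\rho(G)$. Since $b\neq -1$, Theorem~\ref{greedoid thickening} applied with $x=1$, $y=b$ gives
\[ T(G^k;1,b) = (1+b+\cdots+b^{k-1})^{\rho(G)}\, T\left(G;\frac{1+b+\cdots+b^{k-1}}{1+b+\cdots+b^{k-1}},\,b^k\right) = (1+b+\cdots+b^{k-1})^{\rho(G)}\, T(G;1,b^k), \]
so the thickened evaluation still lies on $H_0^x$. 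As $b$ is rational and $b\notin\{-1,1\}$, we have $b^k\neq 1$ for all $k\geq 1$, hence $1+b+\cdots+b^{k-1}=(b^k-1)/(b-1)\neq 0$; therefore, using an oracle for $\pi_3[\mathcal G,1,b]$ to obtain $T(G^k;1,b)$ and dividing by this non-zero factor, we recover $T(G;1,b^k)$ in polynomial time.

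Next, along $H_0^x$ the Tutte polynomial simplifies: since $(x-1)^{\rho(G)-\rho(A)}=0^{\rho(G)-\rho(A)}$ vanishes unless $\rho(A)=\rho(G)$,
\[ T(G;1,y) = \sum_{\substack{A \subseteq E(G): \\ \rho(A) = \rho(G)}} (y-1)^{|A|-\rho(G)}, \]
which is a polynomial in $y$ of degree at most $|E(G)|-\rho(G)\leq |E(G)|$. Hence, computing $T(G^k;1,b)$ and then $T(G;1,b^k)$ for $k=1,\ldots,|E(G)|+1$ yields the value of the polynomial $T(G;1,y)$ at the $|E(G)|+1$ points $y=b,b^2,\ldots,b^{|E(G)|+1}$. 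Because $b$ is rational with $|b|\notin\{0,1\}$, these points are pairwise distinct, so the resulting linear system for the coefficients of $T(G;1,y)$ has a non-singular Vandermonde coefficient matrix, and Lemma~\ref{lem:gauss} recovers all of them in time polynomial in $|E(G)|$. A polynomial-time change of basis then produces the coefficients of $T(G;x(t),y(t))$ along $L=H_0^x$ demanded by $\pi_2[\mathcal G,L]$, completing the reduction.

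There is no serious obstacle here: the argument is routine once Theorem~\ref{greedoid thickening} is available. The only places the hypotheses $b\notin\{-1,0,1\}$ are needed are the two bookkeeping points above --- $b\neq -1,1$ guarantees that the normalizing factors $1+b+\cdots+b^{k-1}$ are non-zero (and puts us in the case $y\neq -1$ of Theorem~\ref{greedoid thickening}), while $b\notin\{-1,0,1\}$ guarantees that the interpolation points $b,b^2,\ldots$ are distinct, so that the Vandermonde system is invertible. (For $b=0$, for instance, all the points $b^k$ collapse to $0$ and no interpolation is possible.)
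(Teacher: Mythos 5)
Your proof is correct and follows essentially the same route as the paper: the paper likewise observes that the $k$-thickening fixes the line $x=1$ (the numerator and denominator in Theorem~\ref{greedoid thickening} coincide when $a=1$) and then interpolates $T(G;1,y)$ at the distinct points $y=b^k$ via a Vandermonde system and Lemma~\ref{lem:gauss}. The only (harmless) difference is that you use $|E(G)|+1$ evaluations after noting the exponents $|A|-\rho(G)$ are non-negative, whereas the paper keeps the uniform range $k=1,\ldots,|E(G)|+\rho(G)+1$ from Proposition~\ref{main proposition 1}.
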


\begin{proof}
Let $G$ be in $\mathcal{G}$.
Along $L$ the Tutte polynomial of $G$ has the form
\[T(G;1,y) = \sum_{\substack{A \subseteq E(G): \\ \rho(A)=\rho(G)}}(y-1)^{|A|-\rho(G)} = \sum_{i = -\rho(G)}^{|E(G)|}t_i y^i,\]
for some $t_{-\rho(G)}, \ldots , t_{|E(G)|}$.

The proof now follows in a similar way to that of Proposition~\ref{main proposition 1} by computing
$T(G^k;1,b)$ for $k=1, \ldots, |E(G)|+\rho(G)+1$ and then determining each coefficient $t_i$ in time polynomial in $|E(G)|$.
\end{proof}

The following proposition deals with the case when $b=1$.

\begin{proposition}
Let $L=H_0^y$ and $a \in \mathbb{Q} -\{1\}$. Then
\[ \pi_2[\mathcal{G},L] \propto_T \pi_3[\mathcal{G},a,1].\]
\end{proposition}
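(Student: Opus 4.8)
The plan is to exploit the $y=1$ specialization of the $k$-thickening formula. First I would record the shape of the target polynomial: along $L = H_0^y$,
\[ T(G;x,1) = \sum_{\substack{A \subseteq E(G):\\ \rho(A)=|A|}}(x-1)^{\rho(G)-\rho(A)} = \sum_{i=0}^{\rho(G)} f_i (x-1)^i, \]
where $f_i$ is the number of feasible sets of $\Gamma(G)$ of rank $\rho(G)-i$. This is a polynomial in $x$ of degree at most $\rho(G)$ with no negative powers, so there are only $\rho(G)+1$ coefficients to determine, and computing them is exactly the output required by $\pi_2[\mathcal G,L]$.

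Next I would invoke Theorem~\ref{greedoid thickening} in the case $y=1$. Since $\Gamma(G^k) \cong \Gamma(G)^k$ and $\rho(G^k)=\rho(G)$, and using $\frac{a+k-1}{k}-1 = \frac{a-1}{k}$, this gives
\[ T(G^k;a,1) = k^{\rho(G)}\, T\!\Big(G;\frac{a+k-1}{k},1\Big) = \sum_{i=0}^{\rho(G)} f_i (a-1)^i k^{\rho(G)-i}, \]
so $k \mapsto T(G^k;a,1)$ is a polynomial in $k$ of degree at most $\rho(G)$ whose coefficients are the numbers $f_i(a-1)^i$. I would check that for each positive integer $k$ the thickened graph $G^k$ is again connected, planar and bipartite — thickening preserves connectedness and planarity, and every edge of $G^k$ still joins the two colour classes of $G$, so every cycle of $G^k$ is even — and that $G^k$ is constructible from $G$ in time polynomial in $|E(G)|$, so querying the oracle for $\pi_3[\mathcal G,a,1]$ on $G^k$ is legitimate.

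Then I would call that oracle on $G^k$ for $k=1,\ldots,\rho(G)+1$, where $\rho(G)=|V(G)|-1$ is immediate. This produces a system of $\rho(G)+1$ linear equations in the unknowns $f_0(a-1)^0,\ldots,f_{\rho(G)}(a-1)^{\rho(G)}$ whose coefficient matrix is the Vandermonde matrix in $1,\ldots,\rho(G)+1$, hence non-singular; Lemma~\ref{lem:gauss} solves it in time polynomial in $|E(G)|$. Finally, since $a \neq 1$ we have $a-1 \neq 0$, so dividing by $(a-1)^i$ recovers every $f_i$, and hence the polynomial $T(G;x,1)=\sum_i f_i(x-1)^i$, whose coefficients as a polynomial in $x$ are obtained by a direct expansion.

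I do not expect a genuine obstacle here; this is the most straightforward of the three propositions in this group. The $y=1$ branch of Theorem~\ref{greedoid thickening} has a particularly clean form, and — in contrast with Proposition~\ref{main proposition 1} — the interpolation is carried out directly in the thickening parameter $k$, so no case analysis on whether the induced sequence of evaluation points contains repetitions is needed. The only mild point of care is simply to use the $y=1$ branch rather than Equation~\ref{greedoid thickening equation when y not equal -1}.
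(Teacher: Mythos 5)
Your proposal is correct and follows essentially the same route as the paper: apply the $y=1$ branch of the $k$-thickening formula to $G^k$ for $k=1,\ldots,\rho(G)+1$, check that $G^k$ stays in $\mathcal{G}$, and solve the resulting non-singular Vandermonde system via Lemma~\ref{lem:gauss}. The only (harmless) cosmetic difference is that you interpolate directly in the thickening parameter $k$ and then divide out the factors $(a-1)^i$, whereas the paper interpolates the restricted polynomial at the pairwise distinct points $\frac{a+k-1}{k}$, using $a\neq 1$ there instead.
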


\begin{proof}
Let $G$ be in $\mathcal{G}$.
Along $L$ the Tutte polynomial of $G$ has the form
\[T(G;x,1) = \sum_{\substack{A \subseteq E(G):\\ \rho(A)=|A|}}(x-1)^{\rho(G)-\rho(A)}= \sum_{i =0}^{\rho(G)}t_i x^i,\]
for some $t_{0}, \ldots , t_{\rho(G)}$.

We now show that we can determine all of the coefficients $t_i$ from the evaluations $T(G^k;a,1)$ for $k= 1,\ldots, \rho(G)+1$
in time polynomial in $|E(G)|$. For each such $k$, $G^k$ may be constructed from $G$ in time polynomial in $|E(G)|$ and is bipartite, planar and connected.
By Theorem~\ref{greedoid thickening}, we have
\[T(G^k;a,1) = k^{\rho(G)}T\left(G; \frac{a+k-1}{k},1\right).\]

Therefore we may compute $T\left(G; \frac{a+k-1}{k},1\right)$ from $T(G^k;a,1)$.
Clearly $\left(\frac{a+k-1}{k},1\right)$ lies on $H_0^y$.
Since $a \neq 1$, the points $\left(\frac{a+k-1}{k},1\right)$ are pairwise distinct for $k = 1,2,\ldots, \rho(G)+1$.
Therefore by evaluating $T(G^k;a,1)$ for $k=1,\ldots, \rho(G)+1$, we obtain $\sum_{i = 0}^{|\rho(G)|}t_iz^i$ for $\rho(G)+1$ distinct values of $z$. This gives us $\rho(G)+1$ linear equations for the coefficients $t_i$. Again the matrix of the equations is a Vandermonde matrix and clearly non-singular. So, we may apply Lemma~\ref{lem:gauss} to compute $t_i$ for all $i$ in time polynomial in $|E(G)|$.
\end{proof}

We now summarize the three preceding propositions.

\begin{proposition}
\label{main subtheorem}
Let $L$ be either $H_0^x$, $H_0^y$, or $H_{\alpha}$ for $\alpha \in \mathbb{Q}-\{0\}$. Let $(a,b)$ be a point on $L$ such that $(a,b) \neq (1,1)$ and $b \notin \{-1,0\}$. Then
\[ \pi_2[\mathcal{G}, L] \propto_T \pi_3[\mathcal{G},a,b].\]
\end{proposition}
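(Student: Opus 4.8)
The plan is to obtain the statement purely as a case split over the three possibilities for $L$, invoking the matching proposition in each case; the only thing to verify is that the hypotheses of those propositions are satisfied under the weaker-looking assumptions stated here.

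First I would handle $L = H_{\alpha}$ with $\alpha \in \mathbb{Q}-\{0\}$. Since $(a,b)$ lies on $L$, we have $(a-1)(b-1) = \alpha \neq 0$, which in particular forces $b \neq 1$; together with the hypothesis $b \notin \{-1,0\}$ this is exactly the hypothesis of Proposition~\ref{main proposition 1}, so that proposition gives the reduction $\pi_2[\mathcal{G},L] \propto_T \pi_3[\mathcal{G},a,b]$ immediately.

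Next I would take $L = H_0^x$, so that every point of $L$, and in particular $(a,b)$, has first coordinate $1$; hence $a=1$. The assumption $(a,b) \neq (1,1)$ then gives $b \neq 1$, and combined with $b \notin \{-1,0\}$ we obtain $b \in \mathbb{Q}-\{-1,0,1\}$, which is precisely the hypothesis of the second of the three preceding propositions; it yields $\pi_2[\mathcal{G},L] \propto_T \pi_3[\mathcal{G},1,b] = \pi_3[\mathcal{G},a,b]$. Symmetrically, for $L = H_0^y$ every point has second coordinate $1$, so $b=1$ (whence $b\notin\{-1,0\}$ holds automatically), and $(a,b)\neq(1,1)$ forces $a \neq 1$, matching the hypothesis $a \in \mathbb{Q}-\{1\}$ of the third preceding proposition, which gives $\pi_2[\mathcal{G},L] \propto_T \pi_3[\mathcal{G},a,1] = \pi_3[\mathcal{G},a,b]$.

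Since these three cases are exhaustive, the proposition follows. There is essentially no obstacle to this argument; the only point requiring a moment's care is to notice that the two global exclusions $(a,b)\neq(1,1)$ and $b\notin\{-1,0\}$ specialise, in each of the three cases, to exactly the exclusions demanded by the corresponding constituent proposition — in the $H_\alpha$ case the condition $b\neq 1$ is free because $\alpha\neq 0$, in the $H_0^x$ case it comes from $(a,b)\neq(1,1)$, and in the $H_0^y$ case the roles of $a$ and $b$ are swapped.
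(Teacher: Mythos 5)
Your proof is correct and matches the paper exactly: Proposition~\ref{main subtheorem} is stated there as a summary of the three preceding propositions, and your case split with the verification that the global hypotheses specialise to each constituent proposition's hypotheses is precisely the (implicit) argument. Nothing further is needed.
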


We now consider the exceptional case when $b=-1$. For reasons that will soon become apparent, we recall from Example~\ref{eg:littletrees} that $T(P_2;x,y) = x^2y-2xy+x+y$ and $T(S_k;x,y) = x^k$.

\begin{proposition}
\label{prop: b=-1}
Let $L$ be the line $y=-1$. For $a \notin \{\frac{1}{2},1\}$ we have
\[\pi_2[\mathcal{G},L] \propto_T \pi_3[\mathcal{G}, a,-1].\]
\end{proposition}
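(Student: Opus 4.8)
The plan is to reduce $\pi_2[\mathcal{G},L]$, where $L$ is the line $y=-1$, to the point evaluation problem at $(a,-1)$ for a fixed $a\notin\{\tfrac12,1\}$. The obstacle here is that the $k$-thickening behaves very crudely along $y=-1$: by Theorem~\ref{greedoid thickening}, $T(G^k;x,-1)$ is either $(x-1)^{\rho(G)}$ (for $k$ even) or $T(G;x,-1)$ (for $k$ odd), so thickening alone only ever recovers a single evaluation, not enough linear equations to interpolate the one-variable polynomial $T(G;x,-1)=\sum_{i=0}^{\rho(G)}t_i x^i$. So a different construction is needed to produce a spread of evaluation points along $y=-1$, and the natural candidate is one of the attachment constructions of Section~\ref{sec:constructions}, applied with a small fixed second greedoid whose Tutte polynomial we know explicitly at $y=-1$.

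The key idea: use the $\Gamma_2$-attachment $G\sim_f H$ (which for rooted graphs is the graph attachment $G\sim H$) with $H$ chosen from the menu $\{P_2, S_k\}$ highlighted just before the statement. First I would compute $T(P_2;x,-1)$ and $T(S_k;x,-1)$ from Example~\ref{eg:littletrees}: $T(P_2;x,-1)=-x^2+2x+x-1 = -(x-1)^2$ and $T(S_k;x,-1)=x^k$. Since $T(P_2;x,-1)=-(x-1)^2\ne 0$ for $x=a\ne 1$, and $T(S_k;x,-1)=x^k\ne 0$ for $x=a\ne 0$ (note $a\notin\{\tfrac12,1\}$ does not force $a\ne 0$, so I would instead take a star attachment only if $a\ne 0$, and otherwise handle $a=0$ separately, or — cleaner — iterate the attachment with $H=S_1$, a single rooted edge, for which $T(S_1;x,-1)=x$). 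For graphs $G\in\mathcal{G}$, attaching copies of such an $H$ keeps the result connected, planar and bipartite (choosing $H$ bipartite — $P_2$ and $S_k$ are trees, hence bipartite — and respecting the bipartition, which one can always arrange since attachment identifies a leaf-type vertex). Applying Theorem~\ref{attachment function greedoids} with $y=-1$ gives
\[
T(G\sim_f H;a,-1) = T(H;a,-1)^{\rho(G)}\, T\!\Big(G;\ \frac{(a-1)^{\rho(H)+1}(-1)^{|E(H)|}}{T(H;a,-1)}+1,\ -1\Big).
\]
With $H=S_k$ this evaluation point is $x = (a-1)^{k+1}(-1)^k/a^k + 1$, which for $a\notin\{0,\tfrac12,1\}$ takes infinitely many distinct values as $k$ varies — a short check that the map $k\mapsto ((a-1)/(-a))^{k+1}\cdot(-1)$ is eventually injective, using $|(a-1)/a|\ne 1 \iff a\ne \tfrac12$, which is exactly where the exclusion of $\tfrac12$ enters; the exclusion of $1$ is needed so $(a-1)^{k+1}\ne 0$ and more basically so $a$ is not already on $H_0^y$ trivially. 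The point $a=0$ needs separate treatment: then use $H=P_2$, for which $T(P_2;a,-1)=-(a-1)^2=-1$ at $a=0$, giving evaluation point $x = (a-1)^3(-1)^{|E(P_2)|}/T(P_2;a,-1)+1$; iterating the $P_2$-attachment $k$ times (or attaching a path-of-$P_2$s structure) produces a further spread.

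So the steps, in order, are: (1) record $T(P_2;x,-1)=-(x-1)^2$ and $T(S_k;x,-1)=x^k$; (2) fix the appropriate gadget $H$ depending on whether $a=0$ or $a\ne 0$, and observe $T(H;a,-1)\ne 0$ so Theorem~\ref{attachment function greedoids} applies; (3) for $k=1,\dots,\rho(G)+1$, form $G_k := G\sim_f S_k$ (resp.\ the $k$-fold $P_2$-attachment), which is connected, planar and bipartite and constructible from $G$ in polynomial time; (4) from an oracle call $\pi_3[\mathcal{G},a,-1]$ on $G_k$, solve for $T(G; x_k, -1)$ where $x_k$ is the explicit point above, using $T(S_k;a,-1)^{\rho(G)}=a^{k\rho(G)}\ne 0$; (5) verify $x_1,\dots,x_{\rho(G)+1}$ are pairwise distinct (the main computation, handled by the $|(a-1)/a|\ne 1$ argument above); (6) since $T(G;x,-1)$ is a polynomial in $x$ of degree at most $\rho(G)$, invert the resulting Vandermonde system via Lemma~\ref{lem:gauss} to recover all coefficients $t_i$ in time polynomial in $|E(G)|$, which is exactly the output of $\pi_2[\mathcal{G},L]$ for $L$ the line $y=-1$. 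The main obstacle, as noted, is step (5): ensuring the attachment gadget yields enough distinct evaluation points, which is precisely why $a=\tfrac12$ (and trivially $a=1$) must be excluded, and why the $a=0$ subcase needs the $P_2$-gadget rather than the star.
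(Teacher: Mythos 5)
Your main line of attack is exactly the paper's: note that $k$-thickening is useless along $y=-1$, write $T(G;x,-1)$ as a polynomial of degree at most $\rho(G)$ in $z=x-1$, attach stars $S_k$ so that Theorem~\ref{attachment function greedoids} yields $T(G\sim S_k;a,-1)=a^{k\rho(G)}T\bigl(G;\tfrac{(a-1)^{k+1}(-1)^k}{a^k}+1,-1\bigr)$, check the evaluation points are pairwise distinct precisely when $a\notin\{0,\tfrac12,1\}$, and invert the Vandermonde system via Lemma~\ref{lem:gauss}. That part is correct and matches the paper (the paper uses $k=0,1,\dots,\rho(G)$ rather than $1,\dots,\rho(G)+1$, an immaterial difference).

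The $a=0$ subcase as you have written it, however, contains a step that would fail. First, a computational slip: from Example~\ref{eg:littletrees}, $T(P_2;x,-1)=-x^2+3x-1$, not $-(x-1)^2$ (you dropped a term when simplifying $-x^2+2x+x-1$); this happens not to matter at $x=0$, where both expressions give $-1$. More seriously, your proposal to ``iterate the $P_2$-attachment $k$ times'' to produce a spread of evaluation points does not work: the map on the $x$-coordinate induced by a $P_2$-attachment at $y=-1$ is $x\mapsto\frac{(x-1)^3}{-x^2+3x-1}+1$, which sends $0$ to $2$ and then fixes $2$ (since $\frac{(2-1)^3}{-4+6-1}+1=2$), so iteration yields $0,2,2,2,\dots$ and no further distinct points. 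Your alternative of iterating $S_1$-attachments also fails at $a=0$, because $T(S_1;0,-1)=0$ and Theorem~\ref{attachment function greedoids} requires $T(\Gamma_2;x,y)\neq 0$. The correct repair --- and what the paper does --- is to use a \emph{single} $P_2$-attachment only to relocate the point: $T(G\sim P_2;0,-1)=(-1)^{\rho(G)}T(G;2,-1)$ gives $\pi_3[\mathcal{G},2,-1]\propto_T\pi_3[\mathcal{G},0,-1]$, and $(2,-1)$ is already handled by your main star-gadget case since $2\notin\{0,\tfrac12,1\}$; transitivity of $\propto_T$ then finishes the argument.
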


\begin{proof}
Let $G$ be in $\mathcal G$ and let $z=x-1$. Along $L$ the Tutte polynomial of $G$ has the form
\[T(G;x,-1) = \sum_{A \subseteq E(G)}z^{\rho(G)-\rho(A)}(-2)^{|A|-\rho(A)} = \sum^{\rho(G)}_{i=0} t_i z^i\]
for some $t_{0}, \ldots, t_{\rho(G)}$.

We now show that, apart from a few exceptional values of $a$, we can determine all of the coefficients $t_i$ in polynomial time from $T(G\sim S_k;a,-1)$, for $k= 0, 1, \ldots, \rho(G)$,  in time polynomial in $|E(G)|$. For each such $k$, $G\sim S_k$ may be constructed from $G$ in time polynomial in $|E(G)|$ and is bipartite, planar and connected.

By Theorem~\ref{attachment function greedoids} we have
\[T(G \sim S_k;a,-1) = a^{k \rho(G)}T\left(G;\frac{(a-1)^{k+1}(-1)^k}{a^k}+1,-1 \right).\]
Providing $a \neq 0$ we may compute $T\left(G;\frac{(a-1)^{k+1}(-1)^k}{a^k}+1,-1 \right)$ from $T(G\sim S_k;a,-1)$.
For $a \notin\{\frac{1}{2},1\}$ the points $\left(\frac{(a-1)^{k+1}(-1)^k}{a^k}+1,-1\right)$ are pairwise distinct for $k=0,1,2,\ldots, \rho(G)$.
Therefore by evaluating $T(G\sim S_k;a,-1)$ for $k=0,1,2,\ldots, \rho(G)$ where $a \notin \{0,\frac{1}{2},1\}$, we obtain $\sum_{i = 0}^{\rho(G)}t_iz^i$ for $\rho(G)+1$ distinct values of $z$.
This gives us $\rho(G)+1$ linear equations for the coefficients $t_i$. Again the matrix corresponding to these equations is a Vandermonde matrix
and clearly non-singular. So, we may apply Lemma~\ref{lem:gauss} to compute $t_i$ for all $i$ in time polynomial in $|E(G)|$.
Hence for $a \notin \{0,\frac{1}{2},1\}$, $\pi_2[\mathcal G,L] \propto \pi_3[\mathcal G,a,-1]$.

We now look at the case when $a=0$.
Note that $T(P_2;0,-1) = -1$.
Applying Theorem~\ref{attachment function greedoids} to $G$ and $P_2$ gives
\[
T(G\sim P_2;0,-1) = (-1)^{\rho(G)}T\left(G; \frac{(-1)^3(-1)^2}{-1}+1,-1\right)
=(-1)^{\rho(G)}T(G;2,-1).
\]
Therefore we have the reductions
\[\pi_2[\mathcal{G},L] \propto_T \pi_3[\mathcal{G},2,-1] \propto_T \pi_3[\mathcal{G},0,-1].\]
Since the Turing reduction relation is transitive, this implies that evaluating the Tutte polynomial at the point $(0,-1)$ is at least as hard as evaluating it along the line $y=-1$. This completes the proof.
\end{proof}

We now begin to classify the complexity of $\pi_3$. The next results will establish hardness for a few special cases, namely when $b \in \{-1,0,1\}$.

\begin{proposition}
\label{proposition (1,b)}
The problem $\pi_3[\mathcal{G},1,b]$ is $\#$P-hard apart from when $b=1$, in which case it has a polynomial time algorithm.
\end{proposition}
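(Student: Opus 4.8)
The plan is to treat the two halves of the statement separately. For $b=1$ there is nothing new to do: $T(G;1,1)$ is the number of spanning trees of the root component of $G$, which is computable in polynomial time by Kirchhoff's Matrix--Tree theorem, exactly as already noted for the exceptional points of Theorem~\ref{maintheoremrootedgraph}. All the work is in showing that $\pi_3[\mathcal{G},1,b]$ is $\#$P-hard whenever $b\neq 1$, and I would do this by a direct, single-oracle-call Turing reduction from evaluating the ordinary (unrooted) Tutte polynomial at $(1,b)$.

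The engine of the reduction is Proposition~\ref{prop:x=1}. Given a connected bipartite planar \emph{unrooted} graph $G'=(V,E)$, designate an arbitrary vertex $r\in V$ as root to form $G=(V,E,r)$; then $G\in\mathcal{G}$ and $T(G;1,b)=T(G';1,b)$. Hence one call to an oracle for $\pi_3[\mathcal{G},1,b]$ computes $T(G';1,b)$, so the problem of evaluating $T(\cdot;1,b)$ on connected bipartite planar graphs Turing-reduces to $\pi_3[\mathcal{G},1,b]$. It then suffices to observe that, for every $b\neq 1$, evaluating the Tutte polynomial of a bipartite planar graph at $(1,b)$ is $\#$P-hard: the point $(1,b)$ has $(1-1)(b-1)=0$, so it lies on neither $H_1$ nor $H_2$, and since $b\neq 1$ it is neither $(1,1)$ nor $(-1,-1)$; thus it is not one of the exceptional points of Theorem~\ref{VW}, and the Vertigan--Welsh hardness result applies. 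Note that this argument is uniform in $b$: unlike the earlier propositions that reduce $\pi_2[\mathcal{G},H_0^x]$ to a single point evaluation, it needs no $k$-thickening and imposes no exclusion of $b\in\{-1,0\}$, because the line $x=1$ already meets a $\#$P-hard point of the unrooted theory at $(1,b)$ itself.

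The only point requiring any care is that Theorem~\ref{VW} is phrased for bipartite planar graphs whereas the reduction above produces only \emph{connected} such graphs. I would dispatch this by recalling that the Vertigan--Welsh hardness construction already yields connected graphs (equivalently, since the Tutte polynomial is multiplicative over connected components, $\#$P-hardness for connected bipartite planar graphs follows from the general statement in the standard way), so restricting the input class to $\mathcal{G}$ costs nothing. Beyond that the proof is routine; the main thing to get right is the bookkeeping that $(1,b)$ avoids every exceptional locus of Theorem~\ref{VW} for all $b\neq 1$, together with the (easy) verification that the rooting step is a legitimate polynomial-time reduction.
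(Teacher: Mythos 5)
Your proof is correct and follows essentially the same route as the paper: the hardness half is exactly the combination of Proposition~\ref{prop:x=1} (rooting an arbitrary vertex of a connected bipartite planar graph preserves $T(\cdot;1,b)$) with Theorem~\ref{VW}, after checking that $(1,b)$ with $b\neq 1$ avoids $H_1$, $H_2$, $(1,1)$ and $(-1,-1)$, and the $b=1$ case is the Matrix--Tree evaluation already noted. Your extra remark about restricting Theorem~\ref{VW} to connected inputs is a reasonable bit of diligence that the paper leaves implicit.
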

\begin{proof}
The hardness part follows directly from Theorem~\ref{VW} and Proposition~\ref{prop:x=1}. We have already noted the existence of a polynomial time algorithm to solve $\pi_3[\mathcal{G},1,1]$.
\end{proof}

\begin{proposition}
\label{proposition (a,-1)}
The problem $\pi_3[\mathcal{G},a,-1]$ is $\#$P-hard apart from when $a = 1/2$, in which case it has a polynomial time algorithm.
\end{proposition}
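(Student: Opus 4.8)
The plan is to treat the exceptional value $a=\tfrac12$ and the remaining values separately, reducing the hard cases to results already established rather than performing any new interpolation.

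First I would observe that $\bigl(\tfrac12-1\bigr)\bigl(-1-1\bigr)=1$, so $(\tfrac12,-1)$ lies on $H_1$. Hence the polynomial-time evaluation of the Tutte polynomial of a connected rooted graph along $H_1$, already recorded at the start of this section (and following the remarks at the end of Section~\ref{The Tutte Polynomial of a Rooted Graph and of a Rooted Digraph}), applies: for $G\in\mathcal G$ we have $T(G;\tfrac12,-1)=(\tfrac12-1)^{\rho(G)-|E(G)|}(\tfrac12)^{|E(G)|}$, and since $G$ is connected, $\rho(G)=|V(G)|-1$, so this is computed in polynomial time. This gives the claimed algorithm at $a=\tfrac12$.

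For the hardness part, I would first dispose of the point $a=1$: here $(1,-1)$ is covered directly by Proposition~\ref{proposition (1,b)} applied with $b=-1\ne 1$, which gives $\#$P-hardness of $\pi_3[\mathcal G,1,-1]$ (this is where Theorem~\ref{VW}, transported via Proposition~\ref{prop:x=1}, does the real work). For $a\notin\{\tfrac12,1\}$ I would chain reductions. Parametrize the line $L=\{y=-1\}$ by $x(t)=t$, $y(t)=-1$, so that $T(G;x(t),y(t))=T(G;t,-1)$ is a univariate polynomial in $t$; evaluating it at $t=1$ recovers $T(G;1,-1)$, whence $\pi_3[\mathcal G,1,-1]\propto_T\pi_2[\mathcal G,L]$. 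Since $\pi_3[\mathcal G,1,-1]$ is $\#$P-hard, so is $\pi_2[\mathcal G,L]$. Proposition~\ref{prop: b=-1} supplies $\pi_2[\mathcal G,L]\propto_T\pi_3[\mathcal G,a,-1]$ precisely for these $a$, and transitivity of $\propto_T$ then yields that $\pi_3[\mathcal G,a,-1]$ is $\#$P-hard.

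There is no substantial obstacle left: the genuine difficulty, namely interpolating along the non-hyperbolic line $y=-1$ using star- and $P_2$-attachments while keeping the graphs connected, planar and bipartite, has already been packaged into Proposition~\ref{prop: b=-1}, and the base hardness at $(1,-1)$ into Proposition~\ref{proposition (1,b)}. The only points needing care are that $a=1$ must be excluded from the use of Proposition~\ref{prop: b=-1} and handled on its own, and that reading off the coefficients of a one-variable polynomial and evaluating it at a rational point is a legitimate polynomial-time Turing reduction, so that the chain $\pi_3[\mathcal G,1,-1]\propto_T\pi_2[\mathcal G,L]\propto_T\pi_3[\mathcal G,a,-1]$ is valid.
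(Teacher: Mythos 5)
Your proposal is correct and follows essentially the same route as the paper: the point $(\tfrac12,-1)$ is easy because it lies on $H_1$, and for $a\ne\tfrac12$ hardness follows by chaining the trivial reduction $\pi_3[\mathcal G,1,-1]\propto_T\pi_2[\mathcal G,L]$ with Proposition~\ref{prop: b=-1} and the hardness of $\pi_3[\mathcal G,1,-1]$ from Proposition~\ref{proposition (1,b)}. The only cosmetic difference is that you treat $a=1$ as a separate explicit case, whereas the paper absorbs it into the statement ``for $a\ne\tfrac12$'' since the reduction from $(1,-1)$ to itself is trivial.
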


\begin{proof}
First note that there is a polynomial time algorithm for $\pi_3[\mathcal{G},a,-1]$ because
$(\frac{1}{2},-1)$ lies on $H_1$.
Now let $L$ be the line $y=-1$. By Proposition~\ref{prop: b=-1} we have
\[ \pi_2[\mathcal{G},L] \propto_T \pi_3[\mathcal{G},a,-1]\]
for $a \notin \{\frac{1}{2},1\}$.
So
\[ \pi_3[\mathcal{G},1,-1] \propto_T \pi_3[\mathcal{G},a,-1]\]
for $a \neq 1/2$.
By Proposition~\ref{proposition (1,b)} we know that $\pi_3[\mathcal{G},1,-1]$ is $\#$P-hard. So the result follows.
\end{proof}

\begin{proposition}
\label{prop: b=0}
The problem $\pi_3[\mathcal{G},a,0]$ is $\#$P-hard apart from when $a = 0$, in which case it has a polynomial time algorithm.
\end{proposition}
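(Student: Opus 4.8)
The plan is to separate off the single polynomial-time point $a=0$ and to reduce the already-known $\#$P-hard problem $\pi_3[\mathcal{G},1,0]$ to $\pi_3[\mathcal{G},a,0]$ for every $a\neq 0$. For the polynomial-time direction, observe that $(0-1)(0-1)=1$, so $(0,0)$ lies on $H_1$; by the formula for the Tutte polynomial along $H_1$ recorded at the end of Section~\ref{The Tutte Polynomial of a Rooted Graph and of a Rooted Digraph}, $T(G;0,0)=(-1)^{\rho(G)-|E(G)|}0^{|E(G)|}$, which equals $1$ when $G$ has no edges and $0$ otherwise; since $\rho(G)=|V(G)|-1$ for a connected rooted graph, this is trivially computable, giving the claimed algorithm at $a=0$.

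For hardness when $a\neq 0$, note first that the $k$-thickening is useless here: Theorem~\ref{greedoid thickening} with $y=0$ gives $T(G^k;x,0)=T(G;x,0)$, so the line $y=0$ is fixed pointwise. Instead I would use the attachment construction with the one-edge rooted path $P_1$, for which $T(P_1;x,y)=x$ by Example~\ref{eg:littletrees}. Given $G\in\mathcal{G}$, form $G\sim P_1$, which just attaches a pendant edge at each non-root vertex of $G$; it is again connected, planar, bipartite and rooted (bipartiteness survives since one may $2$-colour each new pendant edge consistently with the $2$-colouring of $G$), and it is constructed from $G$ in polynomial time. As $T(P_1;a,0)=a\neq 0$, Theorem~\ref{attachment function greedoids} applies to $\Gamma(G\sim P_1)=\Gamma(G)\sim_f\Gamma(P_1)$; because $|E(P_1)|=1$ the factor $(x-1)^{\rho(P_1)+1}y^{|E(P_1)|}$ vanishes at $y=0$, and the identity collapses to
\[T(G\sim P_1;a,0)=a^{\rho(G)}\,T(G;1,0).\]
Since $a\neq 0$ and $\rho(G)=|V(G)|-1$ is known, dividing by $a^{\rho(G)}$ recovers $T(G;1,0)$, so $\pi_3[\mathcal{G},1,0]\propto_T\pi_3[\mathcal{G},a,0]$. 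By Proposition~\ref{proposition (1,b)}, $\pi_3[\mathcal{G},1,0]$ is $\#$P-hard, hence so is $\pi_3[\mathcal{G},a,0]$ for every $a\neq 0$.

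There is no serious obstacle in this argument; the only points requiring (routine) care are checking that $G\sim P_1$ remains inside the restricted class $\mathcal{G}$ — connectivity and planarity are immediate, bipartiteness needs the colouring remark above — and verifying that the attachment-function identity genuinely degenerates to $a^{\rho(G)}T(G;1,0)$ once $y=0$.
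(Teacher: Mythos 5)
Your proof is correct and is essentially identical to the paper's: the paper also handles $a=0$ via the hyperbola $H_1$ and establishes hardness for $a\neq 0$ by attaching a single pendant edge at each non-root vertex (the paper writes $G\sim S_1$, but $S_1$ and $P_1$ are the same rooted graph) to obtain $T(G\sim S_1;a,0)=a^{\rho(G)}T(G;1,0)$ and then reduce from $\pi_3[\mathcal{G},1,0]$, which is $\#$P-hard by Proposition~\ref{proposition (1,b)}. Your extra observations (that thickening fixes the line $y=0$ pointwise, and the bipartiteness check) are correct but not needed beyond what the paper records.
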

\begin{proof}
Let $G$ be in $\mathcal{G}$.
First note that evaluating the Tutte polynomial of $G$ at the point $(0,0)$ is easy since $(0,0)$ lies on the hyperbola $H_1$.

The rooted graph $G\sim S_1$ may be constructed from $G$ in time polynomial in $|E(G)|$ and is bipartite, planar and connected.
Applying Theorem~\ref{attachment function greedoids} to $G$ and $S_1$ gives
\[T(G\sim S_1; a, 0) = a^{\rho(G)}T(G;1,0).\]
Since $a \neq 0$ we may compute $T(G;1,0)$ from $T(G \sim S_1;a,0)$. Therefore $\pi_3[\mathcal{G},1,0] \propto \pi_3[\mathcal{G},a,0]$.
By Proposition~\ref{proposition (1,b)}, $\pi_3[\mathcal{G},1,0]$ is $\#$P-hard, and the result follows.
\end{proof}

Recall from Equation~\ref{characteristic polynomial specialization of tutte} that along $y=0$ the Tutte polynomial of a rooted graph specializes to the characteristic polynomial. Therefore we have the following corollary.
\begin{corollary}
Computing the characteristic polynomial $p(G;k)$ of a connected rooted graph $G$ is $\#$P-hard for all $k \in \mathbb{Q}-\{1\}$. When $k=1$, there is a polynomial time algorithm.
\end{corollary}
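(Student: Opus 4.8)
The plan is to read the corollary directly off the specialization recorded in Equation~\ref{characteristic polynomial specialization of tutte} together with Proposition~\ref{prop: b=0}. Applying that equation to the branching greedoid $\Gamma(G)$ of a connected rooted graph $G$ gives $p(G;k)=(-1)^{\rho(G)}T(G;1-k,0)$. For a connected rooted graph $G=(V,E,r)$ the rank $\rho(G)$ equals $|V|-1$, so the sign $(-1)^{\rho(G)}$ can be computed in polynomial time directly from $G$. Consequently, for each fixed $k\in\mathbb{Q}$, computing $p(G;k)$ and evaluating $T(G;1-k,0)$ are mutually polynomial-time Turing reducible, and this holds uniformly over any class of connected rooted graphs; in particular, for $G$ ranging over $\mathcal{G}$, computing $p(G;k)$ is Turing equivalent to $\pi_3[\mathcal{G},1-k,0]$.

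Now fix $k\in\mathbb{Q}-\{1\}$ and set $a=1-k$, so $a\ne 0$. Proposition~\ref{prop: b=0} states that $\pi_3[\mathcal{G},a,0]$ is $\#$P-hard, and composing with the reduction of the previous paragraph shows that computing $p(G;k)$ is $\#$P-hard already when $G$ is restricted to be connected, planar and bipartite, and hence a fortiori for all connected rooted graphs. When $k=1$ we instead have $p(G;1)=(-1)^{\rho(G)}T(G;0,0)$; since $(0-1)(0-1)=1$, the point $(0,0)$ lies on $H_1$, so by the discussion at the end of Section~\ref{The Tutte Polynomial of a Rooted Graph and of a Rooted Digraph} the value $T(G;0,0)=(0-1)^{\rho(G)-|E(G)|}0^{|E(G)|}$ is computable in polynomial time, and multiplying by the easily computed sign $(-1)^{\rho(G)}$ yields $p(G;1)$ in polynomial time.

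I do not anticipate a genuine obstacle here: the substance of the corollary is entirely carried by Proposition~\ref{prop: b=0}, and the only point needing attention is that the prefactor $(-1)^{\rho(G)}$ relating $p(G;k)$ to $T(G;1-k,0)$ does not interfere with the Turing reductions in either direction. This is immediate, since $\rho(G)=|V|-1$ is trivially computable; the reduction from $\pi_3[\mathcal{G},1-k,0]$ to the $p(G;k)$-evaluation problem simply queries the oracle on $G$ and multiplies by $(-1)^{|V|-1}$.
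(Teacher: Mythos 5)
Your proposal is correct and follows essentially the same route as the paper: read $p(G;k)=(-1)^{\rho(G)}T(G;1-k,0)$ off Equation~\ref{characteristic polynomial specialization of tutte}, invoke Proposition~\ref{prop: b=0} at $(1-k,0)$ for hardness when $k\neq 1$, and use the fact that $(0,0)$ lies on $H_1$ for the polynomial-time case $k=1$. Your additional remark that the prefactor $(-1)^{\rho(G)}$ with $\rho(G)=|V|-1$ is harmless to the Turing reductions is a correct (if routine) point that the paper leaves implicit.
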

\begin{proof}
Let $k$ be in $\mathbb{Q}$. We have
\[p(G;k) = (-1)^{\rho(G)}T(G;1-k,0).\] By Proposition~\ref{prop: b=0} evaluating $T(G;1-k,0)$ is $\#$P-hard providing $k \neq 1$. Furthermore when $k=1$ we have
\[p(G;1) = (-1)^{\rho(G)}T(G;0,0) = \left\{ \begin{array}{ll} 1 & \text{ if $G$ is edgeless; }\\ 0 & \text{ otherwise, }\\
\end{array}\right.\]
and so it is easy to compute (as expected since $(0,0)$ lies on $H_1$).
\end{proof}

We now consider points along the line $y=1$.

\begin{proposition}
\label{proposition (a,1)}
The problem $\pi_2[\mathcal{G}, a,1]$ is $\#$P-hard when $a \neq 1$.
\end{proposition}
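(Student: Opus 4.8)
The plan is to deduce this from Proposition~\ref{main subtheorem} together with a reduction from \textsc{Bisubtrees}. Applying Proposition~\ref{main subtheorem} with $L=H_0^y$ and the point $(a,1)$ (which lies on $H_0^y$, is not $(1,1)$ since $a\neq1$, and has second coordinate $1\notin\{-1,0\}$) gives $\pi_2[\mathcal{G},H_0^y]\propto_T\pi_3[\mathcal{G},a,1]$, so it suffices to show that $\pi_2[\mathcal{G},H_0^y]$ is $\#$P-hard. The key observation is that an oracle for $\pi_2[\mathcal{G},H_0^y]$ returns the whole polynomial $T(H;x,1)=\sum_{i=0}^{\rho(H)}t_ix^i$, and since also $T(H;x,1)=\sum_{j}f_j(x-1)^{\rho(H)-j}$ where $f_j$ is the number of feasible sets of $\Gamma(H)$ of size $j$, the two expansions are related by an invertible triangular change of basis; hence from the oracle output we can recover every $f_j$. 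For a connected rooted graph $H$, $f_j$ is precisely the number of subtrees of $H$ through the root with $j$ edges, so the oracle in effect delivers the size-graded count of the root-containing subtrees of $H$.

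I would then reduce \textsc{Bisubtrees} to $\pi_2[\mathcal{G},H_0^y]$. Because a tree is connected, $t(G)$ is the sum of $t(C)$ over the connected components $C$ of $G$, each of which is again bipartite and planar, so I may assume $G$ is connected (if $G$ has no edges then $t(G)=|V(G)|$ and there is nothing to do). Fix a connected bipartite planar graph $G$. For each vertex $v$ of $G$, let $H_v$ be the rooted graph obtained from $G$ by adding a new pendant vertex $r_v$ joined to $v$ by a single new edge, rooted at $r_v$; then $H_v$ is connected, planar (draw $r_v$ next to $v$) and bipartite (place $r_v$ in the colour class opposite to $v$), so $H_v\in\mathcal{G}$, and $H_v$ is built from $G$ in polynomial time. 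Since $r_v$ has degree $1$ in $H_v$, any subtree of $H_v$ through $r_v$ with $j\ge1$ edges uses the edge $r_vv$; deleting $r_v$ and that edge gives a bijection between such subtrees and the subtrees of $G$ through $v$ with $j-1$ edges. Hence, writing $N_i(G,v)$ for the number of subtrees of $G$ with $i$ edges that contain $v$, one call of the $\pi_2[\mathcal{G},H_0^y]$-oracle on $H_v$ recovers $N_i(G,v)$ for every $i$.

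It then remains to reassemble these numbers into $t(G)$. Every subtree of $G$ with $i$ edges has exactly $i+1$ vertices, so counting incidences between the $i$-edge subtrees and their vertices yields $\sum_{v\in V(G)}N_i(G,v)=(i+1)\,s_i(G)$, where $s_i(G)$ is the number of subtrees of $G$ with $i$ edges; hence $s_i(G)=\tfrac{1}{i+1}\sum_{v}N_i(G,v)$ and $t(G)=\sum_{i\ge0}s_i(G)$. This reduction makes $|V(G)|$ oracle calls and uses only polynomial-time arithmetic, so \textsc{Bisubtrees} $\propto_T\pi_2[\mathcal{G},H_0^y]\propto_T\pi_3[\mathcal{G},a,1]$, and since \textsc{Bisubtrees} is $\#$P-complete by Proposition~\ref{prop:jerrum}, the problem $\pi_3[\mathcal{G},a,1]$ is $\#$P-hard for every rational $a\neq1$.

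The part I expect to carry the weight is the middle step: passing from the \emph{total} subtree count of an \emph{unrooted} graph to the \emph{size-graded} count of \emph{root-containing} subtrees of a \emph{rooted} graph while remaining inside the class $\mathcal{G}$ of connected, bipartite, planar, rooted graphs. The pendant-vertex gadget is chosen exactly because it preserves connectivity, planarity and bipartiteness; cruder ways of ``rooting'' a subtree, such as contracting an edge, would destroy bipartiteness, and adding edges from a new root to many vertices would destroy planarity. The incidence identity $\sum_vN_i(G,v)=(i+1)s_i(G)$ is what lets the graded data collapse back to $t(G)$, which is precisely why it matters that the oracle delivers the full polynomial along $y=1$ rather than a single evaluation such as $T(\cdot;2,1)$.
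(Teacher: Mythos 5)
Your proof is correct and follows essentially the same route as the paper: both reduce \textsc{Bisubtrees} to $\pi_2[\mathcal{G},H_0^y]$ by reading the size-graded counts of root-containing subtrees off the polynomial along $y=1$ and then applying the incidence identity $\sum_{v}N_i(G,v)=(i+1)s_i(G)$, before invoking Proposition~\ref{main subtheorem} to pass to the point $(a,1)$. The only difference is that your pendant-vertex gadget is unnecessary --- the paper simply roots $G$ at each vertex $v$ in turn, which already stays inside $\mathcal{G}$ because designating a root changes neither connectivity, planarity nor bipartiteness.
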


\begin{proof}
Let $G$ be a connected, planar, bipartite, unrooted graph with $V(G)=\{v_1, \ldots, v_n\}$.
Now for $1\leq j\leq n$, let $G_j$ be the graph in $\mathcal G$ obtained from $G$ by choosing $v_j$ to be the root. Let $\rho_j$ denote the rank function of $G_j$ and $a_i(G_j)$ be the number of subsets $A$ of the edges of $G_j$ having size $i$ so that the root component of $G|A$ is a tree.
Then
\[
T(G_j;x,1) = \sum_{\substack{A \subseteq E: \\ \rho_j(A)=|A|}}(x-1)^{\rho(G_j)-|A|} =\sum_{i=0}^{\rho(G_j)}a_i(G_j)(x-1)^{\rho(G_j)-i}.
\]
Let $a_i(G)$ denote the number of subtrees of $G$ with $i$ edges. Then
\[a_i(G) = \sum_{j=1}^n \frac{a_i(G_j)}{i+1}.\] This is because every subtree $T$ of $G$ with $i$ edges has $i+1$ vertices and its edge set is one of the sets $A$ contributing to $a_i(G_j)$ for the $i+1$ choices of $j$ corresponding to its vertices.

Given an oracle for $\pi_2[\mathcal{G},H_0^y]$, we can compute $a_i(G_j)$ for $i=0,\ldots,|E(G)|$ and $1\leq j \leq n$ in time polynomial in $|E(G)|$. So we can compute $a_i(G)$ and consequently the number of trees of $G$ in time polynomial in $|E(G)|$. Thus
\[ \#\text{SUBTREES} \propto_T \pi_3[\mathcal{G},H_0^y].\]
By Proposition~\ref{main subtheorem} we have
\[\#\text{SUBTREES} \propto_T\pi_3[\mathcal{G},H_0^y] \propto_T \pi_2[\mathcal{G},a,1]\]
for $a\ne 1$. The result now follows from Proposition~\ref{prop:jerrum}.
\end{proof}

We now summarize our results and prove Theorem~\ref{maintheoremrootedgraph}.
\begin{proof}[Proof of Theorem~\ref{maintheoremrootedgraph}]
Let $(a,b)$ be a point on $H_{\alpha}$ for some $\alpha$ in $\mathbb{Q} - \{0,1\}$.
By Proposition~\ref{main subtheorem} we have $\pi_2[\mathcal{G}, H_{\alpha}] \propto_T \pi_3[\mathcal{G},a,b]$ providing $b \notin \{-1,0\}$.
The hyperbola $H_{\alpha}$ crosses the $x$-axis at the point $(1-\alpha, 0)$.
By Proposition~\ref{prop: b=0} the problem $\pi_3[\mathcal{G},1-\alpha,0]$ is $\#$P-hard since $\alpha \neq 1$.
This gives us a $\#$P-hard point on each of these curves and therefore implies $\pi_2[\mathcal{G},H_{\alpha}]$ is $\#$P-hard for $\alpha \in \mathbb{Q}-\{0,1\}$.
Hence $\pi_3[\mathcal{G},a,b]$ is $\#$P-hard for $(a,b)\in H_{\alpha}$ with $\alpha \in \mathbb{Q}-\{0,1\}$ and $b \neq -1$.
The rest of the proof now follows directly by Propositions~\ref{proposition (1,b)},~\ref{proposition (a,-1)} and~\ref{proposition (a,1)}, and the discussion concerning the easy points at the beginning of the section.
\end{proof}

\section{Rooted Digraphs}
\label{sec:rooted digraph}
In this section we let $\mathbb G$ be the class of directed branching greedoids of root-connected rooted digraphs, a class we denote by $\mathcal D$. We consider the same three problems as in the previous section.
Again, it is more convenient to think of the input as being a root-connected rooted digraph rather than its directed branching greedoid.
We present analogous results to those in the previous section by finding the computational complexity of evaluating the Tutte polynomial of a root-connected digraph at a fixed rational point, eventually proving Theorem~\ref{maintheoremdigraph}.

We begin the proof by examining the easy points.
Let $D$ be a rooted digraph with edge set $E$ and rank function $\rho$.
If a point $(a,b)$ lies on the hyperbola $H_1$ then, following the remarks at the end of Section~\ref{The Tutte Polynomial of a Rooted Graph and of a Rooted Digraph}, $T(D;a,b)$ is easily computed.
We now show that evaluating $T(D;a,0)$ is easy for all $a \in \mathbb{Q}$.
A \emph{sink} in a digraph is a non-isolated vertex with no outgoing edges. Suppose that $D$ is a root-connected, rooted digraph with $s$ sinks. Then Gordon and McMahon~\cite{GordonMcMahon3} have shown that its characteristic polynomial $p$ satisfies the following.

\[p(D;\lambda) = \begin{cases} (-1)^{\rho(D)}(1-\lambda)^s & \text{if $D$ is acyclic;} \\
0 & \text{if $D$ has a directed cycle.} \end{cases} \]
Using the relation $T(D;1-\lambda,0) = (-1)^{\rho(D)}p(D;\lambda)$ we see that
\[T(D;x,0) = \begin{cases} x^s & \text{if $D$ is acyclic;} \\
0 & \text{if $D$ has a directed cycle.} \end{cases}\]
It is easy to count the sinks in a digraph so the problem $\pi_3[\mathcal{D},a,0]$ can be solved in polynomial time for any $a \in \mathbb{Q}$. Every edge of a component of a rooted digraph other than the root component is a greedoid loop, so if $D$ has such an edge then $T(D;1-\lambda,0)=0$. Furthermore, the addition or removal of isolated vertices makes no difference to $T(D)$.
So $T(D;a,0)$ can be computed in polynomial time for the class of all rooted digraphs.

We noted in Section~\ref{The Tutte Polynomial of a Rooted Graph and of a Rooted Digraph} that $T(D;1,1)$ is the number of spanning arborescences of the root component of $D$ rooted at $r$. This can be computed in polynomial time using the Matrix--Tree theorem for directed graphs~\cite{MR1579119,MR27521}.

We now move on to consider the hard points. The $k$-thickening operation will again be crucial: the \emph{$k$-thickening} $D^k$ of a root-connected digraph $D$ is obtained by replacing every edge $e$ in $D$ by $k$ parallel edges that have the same direction as $e$. We have $\Gamma(D^k)\cong (\Gamma(D))^k$, so
Theorem~\ref{greedoid thickening} can be applied to give an expression for $T(D^k)$.

The proof of the following proposition is omitted as it is analogous to that of Proposition~\ref{main subtheorem}.

\begin{proposition}
\label{digraph reduction 1}
Let $L$ be either $H_0^x, H_0^y$, or $H_{\alpha}$ for $\alpha \in \mathbb{Q}-\{0\}$. Let $(a,b)$ be a point on $L$ such that $(a,b) \neq (1,1)$ and $b \notin \{-1,0\}$. Then
\[\pi_2[\mathcal{D},L] \propto_T \pi_3[\mathcal{D},a,b].\]
\end{proposition}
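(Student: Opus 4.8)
The plan is to follow the proof of Proposition~\ref{main subtheorem} essentially verbatim, substituting rooted digraphs for rooted graphs and invoking the digraph analogue of the $k$-thickening throughout. Since Proposition~\ref{main subtheorem} is itself merely a summary of three propositions (the cases $a\ne 1$ with $b\notin\{-1,0,1\}$; the case $a=1$; and the case $b=1$), I would prove the digraph version by establishing the corresponding three sub-cases and then amalgamating them exactly as before.

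First, for a point $(x,y)$ on $L=H_\alpha$ with $y\ne 1$, write $z=y-1\ne 0$, so $x-1=\alpha/z$, and expand
\[T(D;x,y)=\sum_{A\subseteq E(D)}\left(\frac{\alpha}{z}\right)^{\rho(D)-\rho(A)}z^{|A|-\rho(A)}=\sum_{i=-\rho(D)}^{|E(D)|}t_i z^i\]
for suitable coefficients $t_i$ depending only on $D$. Given $D\in\mathcal D$, the $k$-thickening $D^k$ is again a root-connected rooted digraph and is constructible from $D$ in time polynomial in $|E(D)|$; moreover $\Gamma(D^k)\cong(\Gamma(D))^k$, so Theorem~\ref{greedoid thickening} applies and gives, for $b\ne -1$,
\[T(D^k;a,b)=(1+b+\cdots+b^{k-1})^{\rho(D)}\,T\!\left(D;\frac{a+b+\cdots+b^{k-1}}{1+b+\cdots+b^{k-1}},b^k\right).\]
Since $b\notin\{-1,0,1\}$ we have $1+b+\cdots+b^{k-1}\ne 0$, so an oracle for $\pi_3[\mathcal D,a,b]$ lets us recover $T(D;\cdot,b^k)$ at the point $\big(\tfrac{a+b+\cdots+b^{k-1}}{1+b+\cdots+b^{k-1}},b^k\big)$, which lies on $L$ because $\big(\tfrac{a+b+\cdots+b^{k-1}}{1+b+\cdots+b^{k-1}}-1\big)(b^k-1)=(a-1)(b-1)=\alpha$. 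The values $b^k$ for $k=1,\ldots,|E(D)|+\rho(D)+1$ are pairwise distinct since $b\notin\{-1,0,1\}$, so we obtain a Vandermonde system in the $t_i$ which, by Lemma~\ref{lem:gauss}, can be solved in polynomial time; this yields $\pi_2[\mathcal D,L]\propto_T\pi_3[\mathcal D,a,b]$. The cases $a=1$ (take $L=H_0^x$, so $T(D;1,y)=\sum_{A:\rho(A)=\rho(D)}(y-1)^{|A|-\rho(D)}$) and $b=1$ (take $L=H_0^y$, and use the $y=1$ specialization of Theorem~\ref{greedoid thickening}, namely $T(D^k;a,1)=k^{\rho(D)}T(D;\tfrac{a+k-1}{k},1)$ with the points $\tfrac{a+k-1}{k}$ distinct for $a\ne 1$) go through identically.

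The only genuine obstacle is verifying that nothing specific to undirected graphs was used: the three ingredients are (i) the identity $\Gamma(D^k)\cong(\Gamma(D))^k$, (ii) closure of $\mathcal D$ under $k$-thickening and polynomial-time constructibility, and (iii) Theorem~\ref{greedoid thickening}, which is stated for arbitrary greedoids. Point~(iii) is immediate; point~(ii) holds because replacing each directed edge by $k$ parallel directed edges of the same orientation preserves root-connectivity; point~(i) is routine from the definition of feasibility in a directed branching greedoid, since a subset of the thickened edge set is feasible exactly when its projection to $E(D)$ is feasible in $\Gamma(D)$ and no two chosen edges replace the same original edge. Since the argument makes no appeal to planarity, bipartiteness, or any undirected-specific structure, the proof of Proposition~\ref{main subtheorem} transcribes with only the obvious notational changes; hence the statement is omitted, exactly as the authors indicate.
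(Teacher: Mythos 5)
Your proposal is correct and follows exactly the route the paper intends: the paper omits this proof precisely because it is the argument of Proposition~\ref{main subtheorem} transcribed to digraphs, using $\Gamma(D^k)\cong(\Gamma(D))^k$, closure of $\mathcal D$ under $k$-thickening, and Theorem~\ref{greedoid thickening}, all of which you verify. Nothing is missing.
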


We let $\overrightarrow{P_k}$ be the root-connected directed path of length $k$ with the root being one of the leaves and $\overrightarrow{S_k}$ be the root-connected directed star with $k$ edges emanating from the root.
Then $T(\overrightarrow{P_k};x,y) = 1+\sum_{i=1}^k(x-1)^iy^{i-1}$ and $T(\overrightarrow{S_k};x,y)=x^k$.
The proof of the following proposition is analogous to that of Proposition~\ref{prop: b=-1} with $\overrightarrow{P_k}$ and $\overrightarrow{S_k}$ playing the roles of $P_k$ and $S_k$.

\begin{proposition}
\label{digraph reduction 2}
Let $L$ be the line $y=-1$. For $a \notin \{\frac{1}{2},1\}$ we have \[\pi_2[\mathcal{D},L] \propto_T \pi_3[\mathcal{D},a,-1].\]
\end{proposition}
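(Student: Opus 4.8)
The plan is to follow the proof of Proposition~\ref{prop: b=-1} essentially verbatim, with the directed rooted star $\overrightarrow{S_k}$ and the directed path $\overrightarrow{P_2}$ playing the roles of $S_k$ and $P_2$, and with the directed attachment $D \sim D'$ (formed, exactly as for rooted graphs, by identifying each non-root vertex of $D$ with the root of a fresh copy of $D'$) in place of $G \sim H$. Fix a root-connected rooted digraph $D$ and set $z = x - 1$. Since $0 \le \rho_D(A) \le |A|$ for every $A \subseteq E(D)$, the restriction of the Tutte polynomial to $y = -1$,
\[ T(D;x,-1) = \sum_{A \subseteq E(D)} z^{\rho(D)-\rho_D(A)}(-2)^{|A|-\rho_D(A)} = \sum_{i=0}^{\rho(D)} t_i z^i, \]
is a polynomial of degree at most $\rho(D)$, so it suffices to recover the coefficients $t_0,\dots,t_{\rho(D)}$.

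First I would use the attachment construction to generate enough evaluations. As in Section~\ref{sec:constructions}, a root-connected rooted digraph $D$ with non-root vertices $v_1,\dots,v_{\rho(D)}$ carries the attachment function $f$ given by $i \in f(F)$ iff $v_i$ lies in the root component of $D|F$, one has $\Gamma(D \sim \overrightarrow{S_k}) = \Gamma(D) \sim_f \Gamma(\overrightarrow{S_k})$, and $D \sim \overrightarrow{S_k}$ is again root-connected and is built from $D$ in polynomial time. Since $T(\overrightarrow{S_k};x,y) = x^k$ we have $\rho(\overrightarrow{S_k}) = |E(\overrightarrow{S_k})| = k$, so Theorem~\ref{attachment function greedoids} gives
\[ T(D \sim \overrightarrow{S_k};a,-1) = a^{k\rho(D)}\, T\!\left(D;\; \frac{(a-1)^{k+1}(-1)^k}{a^k} + 1,\; -1\right). \]
For $a \ne 0$, an oracle for $\pi_3[\mathcal{D},a,-1]$ therefore yields $T(D;\xi_k,-1)$, where $z_k := \xi_k - 1 = (a-1)\bigl((1-a)/a\bigr)^k$, for each $k = 0,1,\dots,\rho(D)$. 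The values $z_0,\dots,z_{\rho(D)}$ form a geometric progression whose terms are pairwise distinct precisely when $a \notin \{0,\tfrac12,1\}$, so the equations $\sum_i t_i z_k^{\,i} = T(D;\xi_k,-1)$ form a non-singular Vandermonde system, and Lemma~\ref{lem:gauss} recovers all the $t_i$ in polynomial time. This establishes $\pi_2[\mathcal{D},L] \propto_T \pi_3[\mathcal{D},a,-1]$ for every $a \notin \{0,\tfrac12,1\}$.

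It then remains to handle $a = 0$, for which I would copy the undirected argument with $\overrightarrow{P_2}$ in place of $P_2$. From $T(\overrightarrow{P_k};x,y) = 1 + \sum_{i=1}^{k}(x-1)^i y^{i-1}$ we get $T(\overrightarrow{P_2};0,-1) = -1$ and $\rho(\overrightarrow{P_2}) = |E(\overrightarrow{P_2})| = 2$, so Theorem~\ref{attachment function greedoids} applied to $D$ and $\overrightarrow{P_2}$ gives $T(D \sim \overrightarrow{P_2};0,-1) = (-1)^{\rho(D)}\,T(D;2,-1)$. Hence $\pi_3[\mathcal{D},2,-1] \propto_T \pi_3[\mathcal{D},0,-1]$, and chaining this with the reduction of the previous paragraph at $a = 2$ together with the transitivity of $\propto_T$ yields $\pi_2[\mathcal{D},L] \propto_T \pi_3[\mathcal{D},0,-1]$, which finishes the case $a = 0$.

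The one step that genuinely needs checking — and the main obstacle — is that the attachment machinery of Section~\ref{sec:constructions} transfers to the directed setting: one must verify that $F \mapsto \{\, i : v_i \text{ lies in the root component of } D|F \,\}$ is a genuine $\Gamma(D)$-attachment function (the condition $|f(F)| = \rho(F)$ uses that a feasible set of a directed branching greedoid spans an arborescence through the root, and the nesting condition uses that $F_1 \subseteq \sigma_{\Gamma(D)}(F_2)$ forces the corresponding reachable-vertex sets to nest), and that $\Gamma(D \sim \overrightarrow{S_k})$ and $\Gamma(D \sim \overrightarrow{P_2})$ are indeed the greedoid attachments $\Gamma(D) \sim_f \Gamma(\overrightarrow{S_k})$ and $\Gamma(D) \sim_f \Gamma(\overrightarrow{P_2})$. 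Granting this, every remaining step is a line-by-line translation of the proof of Proposition~\ref{prop: b=-1}.
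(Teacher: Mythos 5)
Your proposal is correct and is exactly the argument the paper intends: the paper omits the proof, stating only that it is analogous to Proposition~\ref{prop: b=-1} with $\overrightarrow{P_k}$ and $\overrightarrow{S_k}$ in place of $P_k$ and $S_k$, which is precisely the translation you carry out (including the correct handling of $a=0$ via $\overrightarrow{P_2}$ and the point $(2,-1)$). Your flagged verification that the reachable-vertex map is a genuine $\Gamma(D)$-attachment function with $\Gamma(D\sim\overrightarrow{S_k})=\Gamma(D)\sim_f\Gamma(\overrightarrow{S_k})$ is indeed the only detail the paper leaves implicit, and it checks out.
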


Next we classify the complexity of $\pi_3[\mathcal{D},1,b]$ for $b \notin \{0,1\}$.
Suppose we have a root-connected digraph $D$ and generate a random subgraph $(D,p)$ of $D$ by deleting each edge with probability $p$ independently of all the other edges. Let $g(D;p)$ denote the probability that $(D,p)$ is root-connected and let $g_j$ be the number of subsets $A$ of $E(D)$ with size $j$ so that $D|A$ is root-connected. Notice that $g_j$ is equal to the number of subsets $A$ of $E$ with $|A|=j$ and $\rho(A)=\rho(E)$. Then
\[ g(D;p) = \sum_{j=0}^{|E(D)|}g_j p^{|E(D)|-j}(1-p)^{j}.\]

Provan and Ball~\cite{ProvanBall} showed that the following problem is $\#$P-complete for each rational $p$ with $0 < p < 1$, and computable in polynomial time when $p=0$ or $p=1$.

\prob{$\#$\textsc{Connectedness Reliability}}{$D \in \mathcal{D}$.}{$g(D;p)$.}

Note that we have restricted the input digraph to being root-connected which Provan and Ball did not, but this does not make a difference to the complexity, because if $D$ is not root-connected then clearly $g(D;p)=0$.
We now use this result to classify the complexity of points along the line $x=1$.
\begin{proposition}
\label{digraph hard b>1}
The computational problem $\pi_3[\mathcal{D},1,b]$ is $\#$P-hard for $b>1$.
\end{proposition}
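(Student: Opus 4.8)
The plan is to give a polynomial-time Turing reduction from $\#\textsc{Connectedness Reliability}$ at the fixed probability $p=1/b$ to $\pi_3[\mathcal{D},1,b]$, using essentially a single oracle call. The underlying point is that, up to an explicit monomial prefactor, the restriction of the Tutte polynomial of a root-connected digraph to the line $x=1$ is the connectedness reliability polynomial after the substitution $p = 1/y$.

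First I would note that, because $D$ is root-connected, a set $A\subseteq E(D)$ satisfies $\rho(A)=\rho(D)$ if and only if $D|A$ is root-connected. Hence
\[T(D;1,y)=\sum_{\substack{A\subseteq E(D):\\ \rho(A)=\rho(D)}}(y-1)^{|A|-\rho(D)}=\sum_{j}g_j(y-1)^{j-\rho(D)},\]
where $g_j$ is precisely the quantity occurring in the formula for $g(D;p)$, namely the number of root-connected spanning subdigraphs of $D$ with $j$ edges. Substituting $y=1/p$, so that $y-1=(1-p)/p$, and comparing with $g(D;p)=\sum_j g_j\,p^{|E(D)|-j}(1-p)^j$ yields the identity
\[g(D;p)=p^{|E(D)|-\rho(D)}(1-p)^{\rho(D)}\,T(D;1,1/p).\]

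Then I would fix a rational $b>1$ and set $p=1/b$, which is a rational number strictly between $0$ and $1$. Given a root-connected rooted digraph $D$, one computes $|E(D)|$ and $\rho(D)=|V(D)|-1$ in polynomial time, queries the oracle for $\pi_3[\mathcal{D},1,b]$ once to obtain $T(D;1,b)$, and then evaluates the displayed identity to recover $g(D;1/b)$. This exhibits a polynomial-time Turing reduction of $\#\textsc{Connectedness Reliability}$ at $p=1/b$ to $\pi_3[\mathcal{D},1,b]$; since the former problem is $\#$P-hard by the theorem of Provan and Ball (as $0<1/b<1$), it follows that $\pi_3[\mathcal{D},1,b]$ is $\#$P-hard.

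I expect no genuine obstacle here. The only minor points to verify are that Provan and Ball's hardness survives the restriction to root-connected inputs (immediate, since $g(D;p)=0$ otherwise, as already remarked) and that $1/b\in(0,1)$ so that their result applies. As an alternative route, more in the spirit of the other hardness proofs in this section, one could instead invoke Proposition~\ref{digraph reduction 1} to obtain $\pi_2[\mathcal{D},H_0^x]\propto_T\pi_3[\mathcal{D},1,b]$ — legitimate because $b\notin\{-1,0,1\}$ — recover every coefficient $g_j$ of $T(D;1,y)$, and then compute $g(D;p)$ for a single fixed $p\in(0,1)$; but the direct reduction above is cleaner.
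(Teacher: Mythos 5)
Your proof is correct and matches the paper's argument: the paper establishes exactly the same identity $g(D;p)=p^{|E(D)|-\rho(D)}(1-p)^{\rho(D)}T(D;1,1/p)$ and reduces Provan--Ball's $\#$\textsc{Connectedness Reliability} at $p=1/b\in(0,1)$ to a single evaluation of $T(D;1,b)$. No issues.
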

\begin{proof}
Let $D$ be a root-connected digraph with edge set $E$ and rank function $\rho$.
Then for $0<p<1$ we have
\begin{align*}
g(D;p)&= \sum_{\substack{ A \subseteq E(D): \\ \rho(A) = \rho(D)}}p^{|E(D)|-|A|}(1-p)^{|A|}= p^{|E(D)|-\rho(D)}(1-p)^{\rho(D)}\sum_{\substack{ A \subseteq E(D): \\ \rho(A)=\rho(D)}}\left(\frac{1-p}{p}\right)^{|A|-\rho(A)}\\
&=p^{|E(D)|-\rho(D)}(1-p)^{\rho(D)}T\left(D;1,\frac{1}{p}\right). \end{align*}

Evaluating $g(D;p)$ is therefore Turing-reducible to evaluating $T(D;1,\frac{1}{p})$ for $0<p<1$. Therefore, $\pi_3[\mathcal{D},1,b]$ is $\#$P-hard for $b>1$.
\end{proof}

In order to determine the complexity of the point $\pi_3[\mathcal{D},1,-1]$, we introduce a new operation on root-connected digraphs which we call the \emph{$k$-digon-stretch}.
We define a \emph{tailed $k$-digon} from $u$ to $v$ to be the digraph defined as follows. The vertex set is $\{w_0=u,w_1,\ldots,w_k,w_{k+1}=v\}$. There is an edge $w_0w_1$ and a directed cycle of length $2$ on $w_i$ and $w_{i+1}$ for each $i$ with $1\leq i \leq k$.
An example of a tailed $k$-digon is shown in Figure~\ref{fig:ktaildigon}. (The labelling of the edges will be needed later.)
For a root-connected digraph $D$, the $k$-digon-stretch of $D$ is constructed by replacing every directed edge $uv$ in $D$ by a tailed $k$-digon from $u$ to $v$. We denote the $k$-digon-stretch of $D$ by $D_k$.

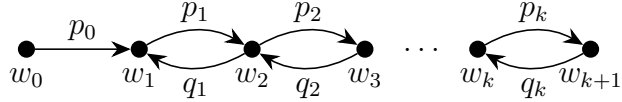
\begin{figure}
\centering
\begin{tikzpicture}
[>={Stealth[scale=1.3,angle'=45]},label distance=0pt,semithick,v/.style={circle,fill=black,draw=black,thick,
minimum size=2pt,inner sep=2pt
},e/.style={inner sep=4pt,rectangle,text height=1.5ex,text depth=.25ex,auto}]
\node(1)[v,label=below:$w_0$] at (0,0) {};
\node(2)[v,label=below:$w_1$] at (1.5,0) {};
\node(3)[v,label=below:$w_2$] at (3,0) {};
\node(4)[v,label=below:$w_3$] at (4.5,0) {};
\node(5)[v,label=below:$w_k$] at (6,0) {};
\node(6)[v,label=below:$w_{k+1}$] at (7.5,0) {};
\node(7) at (5.25,0) {$\ldots$};
\draw[->] (1) edge node [e] {$p_0$} (2);
\draw[->] (2) edge [bend left=30] node [e] {$p_1$} (3);
\draw[->] (3) edge [bend left=30] node [e,inner sep=1pt] {$q_1$} (2);
\draw[->] (3) edge [bend left=30] node [e] {$p_2$} (4);
\draw[->] (4) edge [bend left=30] node [e,inner sep=1pt] {$q_2$} (3);
\draw[->] (5) edge [bend left=30] node [e] {$p_k$} (6);
\draw[->] (6) edge [bend left=30] node [e,inner sep=1pt] {$q_k$} (5);
\end{tikzpicture}
\caption{A tailed $k$-digon.\label{fig:ktaildigon}}
\end{figure}

\begin{theorem}
\label{digon-stretch}
Let $D$ be a root-connected digraph. Then
\[T(D_k;1,y)=(k+1)^{|E(D)|-\rho(D)}y^{k |E(D)|}T\left(D;1,\frac{k+y}{k+1}\right).\]
\end{theorem}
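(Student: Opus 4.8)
The plan is to exploit the $x=1$ specialization of the greedoid Tutte polynomial. Writing $z=y-1$, we have $T(\Gamma;1,y)=z^{-\rho(\Gamma)}P(\Gamma;z)$, where $P(\Gamma;z)=\sum_{A\,:\,\rho(A)=\rho(\Gamma)}z^{|A|}$ counts, by size, the full-rank edge sets of $\Gamma$. Since $D$ is root-connected so is $D_k$, and each tailed $k$-digon adds $k$ new vertices, so $\rho(D_k)=\rho(D)+k|E(D)|$; also $\tfrac{k+y}{k+1}-1=\tfrac{z}{k+1}$. A short rearrangement shows the claimed identity is equivalent to
\[
P(D_k;z)=(k+1)^{|E(D)|}\,(yz)^{k|E(D)|}\,P\!\left(D;\tfrac{z}{k+1}\right),
\]
so it suffices to evaluate $P(D_k;z)$, the generating function enumerating edge sets $A$ of $D_k$ for which every vertex of $D_k$ is reachable from the root. (Both sides of the original identity are polynomials in $y$, so it is enough to treat $y\ne1$.)

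For each edge $e=uv$ of $D$ let $p^e_0,\dots,p^e_k$ be the forward edges and $q^e_1,\dots,q^e_k$ the backward edges of the tailed $k$-digon $G_e$ replacing $e$, with private vertices $w^e_1,\dots,w^e_k$. For $A\subseteq E(D_k)$ put $A_e=A\cap E(G_e)$ and $B=B_A=\{e\in E(D):\{p^e_0,\dots,p^e_k\}\subseteq A\}$, the set of gadgets transmitting a directed path from $u$ to $v$. The structural claim I would prove is that $\rho_{D_k}(A)=\rho(D_k)$ if and only if $\rho_D(B)=\rho(D)$ and, for each $e$, every private vertex of $G_e$ is reachable from $\{u,v\}$ inside $G_e|A_e$. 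Here the key observation is that a simple directed path of $D_k|A$ joining two original vertices can pass through $G_e$ only by traversing $u=w^e_0\to w^e_1\to\dots\to w^e_{k+1}=v$ and hence using all of $p^e_0,\dots,p^e_k$ (there is no backward exit from $w^e_1$ to $u$); so reachability among original vertices in $D_k|A$ agrees with reachability in $D|B$, while every out-edge of a private vertex of $G_e$ stays inside $G_e$, so the conditions at different gadgets decouple.

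The remaining work is the local count at one gadget. Inside $G_e$, the vertex $w^e_j$ is reachable from $u$ only via the prefix $p^e_0,\dots,p^e_{j-1}$ and from $v$ only via the suffix $q^e_j,\dots,q^e_k$; so, letting $a$ be the largest index with $p^e_0,\dots,p^e_{a-1}\in A$, all private vertices of $G_e$ are reachable from $\{u,v\}$ precisely when $q^e_{a+1},\dots,q^e_k\in A$. Partitioning the admissible $A_e$ with $e\notin B$ by $a\in\{0,\dots,k\}$, each class forces $k$ edges into $A_e$ and leaves $k$ edges free, so $\sum z^{|A_e|}=(k+1)z^k(1+z)^k=(k+1)z^ky^k$; for $e\in B$ the $k+1$ forward edges are forced in and the $k$ backward edges are free, giving $z^{k+1}(1+z)^k=z^{k+1}y^k$. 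Since $A=\bigsqcup_eA_e$ and the only global constraint is $\rho_D(B)=\rho(D)$, grouping the sum defining $P(D_k;z)$ by $B$ gives
\[
P(D_k;z)=\sum_{\substack{B\subseteq E(D)\,:\\\rho_D(B)=\rho(D)}}\bigl(z^{k+1}y^k\bigr)^{|B|}\bigl((k+1)z^ky^k\bigr)^{|E(D)|-|B|},
\]
which factors as $(k+1)^{|E(D)|}(yz)^{k|E(D)|}P(D;\tfrac{z}{k+1})$; substituting back via $T(\cdot;1,y)=z^{-\rho(\cdot)}P(\cdot;z)$ and $\rho(D_k)=\rho(D)+k|E(D)|$ yields the theorem. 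The step I expect to be the main obstacle is this gadget analysis: one must recognise that a gadget failing to transmit forward still contributes through $k+1$ distinct families of admissible configurations, indexed by the split point $a$ and each of weight $z^ky^k$, so that exactly the factor $(k+1)y^k$ (rather than $z\,y^k$ for a transmitting gadget) arises per edge; loops $e=uv$ of $D$, where $w^e_0=w^e_{k+1}$, require a remark but do not change the local count, which depends only on which endpoints are reachable.
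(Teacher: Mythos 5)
Your proposal is correct and follows essentially the same route as the paper: decompose $A$ by the set $B$ of gadgets admitting a $uv$-dipath, reduce full rank of $A$ in $D_k$ to $\rho_D(B)=\rho(D)$ plus the local reachability condition in each tailed $k$-digon, and then perform the same local count giving weight $(k+1)z^ky^k$ for a non-transmitting gadget (indexed by the split point) and $z^{k+1}y^k$ for a transmitting one. The only difference is the cosmetic reformulation through the full-rank generating function $P(\Gamma;z)=z^{\rho(\Gamma)}T(\Gamma;1,y)$, which the paper handles by carrying the $(y-1)$-powers directly.
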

\begin{proof}
Let $S$ be a subset of edges of a tailed $k$-digon from $u$ to $v$. If $S$ contains all the edges on the unique directed $uv$-path through the $k$-tailed digon, then $S$ is said to \emph{admit a $uv$-dipath}.
Let $A$ be a subset of $E(D_k)$ and $P(A)$ be the set of edges $uv$ in $D$ for which $A$ admits a $uv$-dipath.

We have $\rho(A) = \rho(D_k)$ if and only if
\begin{enumerate}[label=(\roman*),align=left,leftmargin=*]
\item for each directed edge $uv$ of $D$ and each vertex $w$ of the corresponding tailed $k$-digon from $u$ to $v$ in $D_k$, $A$ includes the edges of a path in the $k$-tailed digon from either $u$ or $v$ to $w$, and
\item $\rho(P(A))=\rho(D)$.
\end{enumerate}
Note that $\rho(D_k)= k|E(D)| + \rho(D)$.
We can write $A$ as the disjoint union $A = \bigcup_{e \in E(D)}A_e$ where $A_e$ is the set of edges of $A$ belonging to the tailed $k$-digon corresponding to $e$.
The Tutte polynomial of $D_k$ along the line $x=1$ is given by
\begin{align}
&T(D_k;1,y)=\sum_{\substack{ A \subseteq E(D_k):\\ \rho(A)=\rho(D_k)}}(y-1)^{|A|-\rho(D_k)} = \sum_{\substack{ B \subseteq E(D):\\ \rho(B)=\rho(D)}}\sum_{\substack{ A \subseteq E(D_k):\\ \rho(A)=\rho(D_k)\\P(A)=B}}(y-1)^{|A|-\rho(D_k)} \notag \\
&=\sum_{\substack{ B \subseteq E(D):\\ \rho(B)=\rho(D)}}\sum_{\substack{ A \subseteq E(D_k):\\ \rho(A)=\rho(D_k),\\ P(A)=B}}\underbrace{\left(\prod_{\substack{e \in E(D):\\ e \notin P(A)}}(y-1)^{|A_e|-k}\right)}_{(1)}\underbrace{\left(\prod_{\substack{e \in E(D):\\ e \in P(A)}}(y-1)^{|A_e|-(k+1)}\right)}_{(2)}(y-1)^{|P(A)|- \rho(D)}.
\label{end line in digon formula}\end{align}
Consider a tailed $k$-digon from $u$ to $v$ with vertex set labelled as described just before the statement of the theorem. For $0\leq i \leq k$, let $p_i$ denote the edge $w_iw_{i+1}$;
for $1\leq i \leq k$, let $q_i$ denote the edge $w_{i+1}w_{i}$.

In the first product above we are considering edges $e=uv$ for which $e\notin P(A)$. Thus $A_e$ does not contain all of $p_0$, \ldots, $p_k$. Let $j$ be the smallest integer such that $p_j\notin A_e$. As we are only interested in sets $A$ with $\rho(A)=\rho(D_k)$, each of $q_{j+1}$, \ldots, $q_k$ belongs to $A_e$. Thus $|A_e|\geq k$. Moreover each of $p_{j+1}$, \ldots, $p_k$ and $q_1$, \ldots, $q_j$ may or may not belong to $A_e$. As there are $k+1$ possibilities for $j$, summing \[\prod_{\substack{e \in E(D):\\ e \notin P(A)}}(y-1)^{|A_e|-k}\] over all possible choices of $A_e$ for $e\notin P(A)$ gives $\left((k+1)y^k\right)^{|E(D)|-|P(A)|}$.

In the second product above we are considering edges $e=uv$ for which $e\in P(A)$. Thus $A_e$ contains all of $p_0$, \ldots, $p_k$.
So $|A_e|\geq k+1$. Moreover each of $q_1$, \ldots, $q_k$ may or may not belong to $A_e$. Summing \[\prod_{\substack{e \in E(D):\\ e \in P(A)}}(y-1)^{|A_e|-(k+1)}\] over all possible choices of $A_e$ for $e\in P(A)$ gives $y^{k|P(A)|}$.

Thus the right side of Equation~\ref{end line in digon formula} becomes
\begin{align*}
\MoveEqLeft{\sum_{\substack{ B \subseteq E(D):\\ \rho(B)=\rho(D)}}y^{k|B|}\left((k+1)y^k\right)^{|{E(D)}|-|B|}(y-1)^{|B|- \rho(D)}}\\
&=y^{k|E(D)|}\sum_{\substack{ B \subseteq E(D):\\ \rho(B)=\rho(D)}}(k+1)^{\rho(B)-|B|+|E(D)|-\rho(D)}(y-1)^{|B|-\rho(B)}\\
&=y^{k|E(D)|}(k+1)^{|E(D)|-\rho(D)}T\left(D;1,\frac{y+k}{k+1}\right).
\end{align*}
\end{proof}

We now complete the classification of complexity for points on the line $H_0^x$.

\begin{proposition}
\label{digraph hard x=1}
The problem $\pi_3[\mathcal{D},1,b]$ is $\#$P-hard for $b\notin \{0,1\}$.
\end{proposition}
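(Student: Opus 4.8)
The plan is to reduce a $\#$P-hard problem to $\pi_3[\mathcal{D},1,b]$, using different constructions according to whether $b=-1$ or not. For $b\notin\{-1,0,1\}$ nothing new is needed: Proposition~\ref{digraph hard b>1} shows $\pi_3[\mathcal{D},1,2]$ is $\#$P-hard, and since $(1,2)$ lies on $H_0^x$ and a single evaluation trivially Turing-reduces to computing the whole one-variable restriction, $\pi_2[\mathcal{D},H_0^x]$ is $\#$P-hard. Proposition~\ref{digraph reduction 1}, applied with $L=H_0^x$ and the point $(1,b)$ (the hypothesis $(a,b)\ne(1,1)$ becoming just $b\ne 1$), gives $\pi_2[\mathcal{D},H_0^x]\propto_T\pi_3[\mathcal{D},1,b]$, so $\pi_3[\mathcal{D},1,b]$ is $\#$P-hard for every such $b$. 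The only genuinely new case is $b=-1$, where the $k$-thickening underlying Proposition~\ref{digraph reduction 1} degenerates; this is precisely what the $k$-digon-stretch and Theorem~\ref{digon-stretch} are designed to handle.

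For $b=-1$, take a root-connected digraph $D$ and set $m=|E(D)|$. I would form the digon-stretches $D_0,D_1,\dots,D_m$; each $D_k$ is again root-connected (the tail of a tailed $k$-digon from $u$ to $v$ carries a directed $u$--$v$ path, so root-connectedness is preserved), is constructible from $D$ in time polynomial in $m$, and has $|E(D_k)|$ and $\rho(D_k)=km+\rho(D)$ immediately computable. Evaluating Theorem~\ref{digon-stretch} at $y=-1$ gives
\[T(D_k;1,-1)=(k+1)^{m-\rho(D)}(-1)^{km}\,T\!\left(D;1,\tfrac{k-1}{k+1}\right).\]
The prefactor is nonzero, so an oracle for $\pi_3[\mathcal{D},1,-1]$ yields $T(D;1,y_k)$ with $y_k=\frac{k-1}{k+1}$ for $k=0,\dots,m$. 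These nodes are pairwise distinct ($\frac{k-1}{k+1}=\frac{k'-1}{k'+1}$ forces $k=k'$), and $T(D;1,y)=\sum_{A:\ \rho(A)=\rho(D)}(y-1)^{|A|-\rho(D)}$ is a polynomial in $y$ of degree at most $m$. Hence from the $m+1$ values I can solve the resulting non-singular Vandermonde system via Lemma~\ref{lem:gauss} to recover all coefficients of $T(D;1,y)$ in time polynomial in $m$, and in particular compute $T(D;1,2)$. This establishes $\pi_3[\mathcal{D},1,2]\propto_T\pi_3[\mathcal{D},1,-1]$, and Proposition~\ref{digraph hard b>1} then gives $\#$P-hardness.

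I expect no serious obstacle: the one non-trivial identity, Theorem~\ref{digon-stretch}, is already in hand, so what remains is bookkeeping — distinctness of the interpolation nodes, nonvanishing of the prefactors, and the routine checks that $D_k$ stays root-connected and polynomial-sized. The one point needing care is that the oracle must never be queried at $y_k$ itself; it is queried only at $y=-1$ on the inputs $D_k$, and the value $T(D;1,y_k)$ is then obtained algebraically from the displayed identity, exactly in the style of the thickening reductions. (In fact the same digon-stretch interpolation would cover every $b\notin\{0,1\}$ uniformly, since the prefactor is nonzero whenever $b\ne 0$ and the nodes $\frac{k+b}{k+1}$ are distinct whenever $b\ne 1$; but for $b\ne -1$ it is shorter simply to cite Propositions~\ref{digraph hard b>1} and~\ref{digraph reduction 1}.)
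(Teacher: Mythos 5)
Your proof is correct and rests on the same ingredients as the paper's: the cases $b\notin\{-1,0,1\}$ via Propositions~\ref{digraph hard b>1} and~\ref{digraph reduction 1}, and Theorem~\ref{digon-stretch} for $b=-1$. The paper handles $b=-1$ more economically, though: a single $2$-digon-stretch gives $T(D_2;1,-1)=3^{|E(D)|-\rho(D)}T(D;1,\tfrac{1}{3})$, and since $(1,\tfrac{1}{3})$ is already covered by the first case, one point-to-point transfer suffices --- your interpolation over $k=0,\dots,m$ works but is not needed.
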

\begin{proof}
For $b \notin \{-1,0,1\}$ the result follows immediately from Propositions~\ref{digraph reduction 1} and \ref{digraph hard b>1}.
By Theorem~\ref{digon-stretch}, if $D$ is root-connected, then
\[T(D_2;1,-1)= 3^{|E(D)|-\rho(D)}T\left(D;1,\frac{1}{3}\right).\]
As $D_2$ is root-connected and can be constructed from $D$ in polynomial time, $\pi_3(\mathcal D,1,\frac 13) \propto \pi_3(\mathcal D,1,-1)$,
so $\pi_3(\mathcal D,1,-1)$ is $\#$P-hard.
\end{proof}

We now show that evaluating the Tutte polynomial of a root-connected digraph at most points on the hyperbola $H_{\alpha}$ for $\alpha \neq 0$ is at least as hard as evaluating it at the point $(1+\alpha,2)$.

\begin{proposition}
\label{digraph reduction to y=2}
Let $\alpha$ be in $\mathbb{Q}-\{0\}$ and
$(a,b)$ be a point on  $H_{\alpha}$ with $b \notin \{-1,0\}$, then
\[\pi_3[\mathcal{D},1+\alpha,2] \propto_T \pi_3[\mathcal{D},a,b].\]
\end{proposition}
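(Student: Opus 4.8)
The plan is to follow the same template used in Proposition~\ref{main proposition 1}: use the $k$-thickening operation to move along the hyperbola $H_\alpha$, collect enough evaluations of $T(D^k;a,b)$ to interpolate the one-variable restriction of $T(D;x,y)$ to $H_\alpha$, and then read off the value at the particular point $(1+\alpha,2)$ on $H_\alpha$. The subtlety is that Proposition~\ref{main proposition 1} (or Proposition~\ref{digraph reduction 1}) gives us the \emph{full} restricted polynomial $\pi_2[\mathcal{D},H_\alpha]$ only when $b\notin\{-1,0,1\}$, whereas here $b$ may equal $1$, so the case $b=1$ needs separate handling; also we only need one evaluation at the end, not the whole curve.

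First I would dispose of the generic case $b\notin\{-1,0,1\}$: by Proposition~\ref{digraph reduction 1} we have $\pi_2[\mathcal{D},H_\alpha]\propto_T\pi_3[\mathcal{D},a,b]$, and since $(1+\alpha,2)$ lies on $H_\alpha$ (because $((1+\alpha)-1)(2-1)=\alpha$), evaluating the restricted polynomial at $t$ corresponding to this point solves $\pi_3[\mathcal{D},1+\alpha,2]$; hence $\pi_3[\mathcal{D},1+\alpha,2]\propto_T\pi_2[\mathcal{D},H_\alpha]\propto_T\pi_3[\mathcal{D},a,b]$. This leaves the case $b=1$, i.e. the point $(1+\alpha,1)$ on $H_\alpha$, which the hypothesis permits. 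Here I would argue directly: writing $z=x-1$, along the line $y=1$ we have $T(D;x,1)=\sum_{i=0}^{\rho(D)}a_i(D)z^i$ where $a_i(D)$ counts the feasible sets of size~$i$. Applying Theorem~\ref{greedoid thickening} with $y=1$ gives $T(D^k;1+\alpha,1)=k^{\rho(D)}T\!\left(D;\tfrac{(1+\alpha)+k-1}{k},1\right)=k^{\rho(D)}T\!\left(D;1+\tfrac{\alpha}{k},1\right)$. Since $\alpha\neq 0$, the points $1+\alpha/k$ for $k=1,\dots,\rho(D)+1$ are pairwise distinct, so evaluating $T(D^k;1+\alpha,1)$ for these $\rho(D)+1$ values of $k$ — each $D^k$ being root-connected and constructible from $D$ in polynomial time — yields $\rho(D)+1$ values of the polynomial $\sum a_i(D)z^i$ at distinct points $z=\alpha/k$. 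The resulting linear system has a nonsingular Vandermonde coefficient matrix, so by Lemma~\ref{lem:gauss} we recover all the $a_i(D)$ in polynomial time, and in particular can evaluate $T(D;x,1)$ at any point, including $x=1+\alpha$, i.e. at $(1+\alpha,2)$... wait: this would recover $T(D;1+\alpha,1)$, not $T(D;1+\alpha,2)$.

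Since $(1+\alpha,2)$ does \emph{not} lie on the line $y=1$, the $b=1$ subcase requires an extra idea: I would instead observe that when $b=1$, the hypothesis $(a,b)=(1+\alpha,1)$ means $\pi_3[\mathcal{D},a,b]=\pi_3[\mathcal{D},1+\alpha,1]$, and then route through a different point of $H_\alpha$. Concretely, pick any $b'\notin\{-1,0,1\}$ with $a'=1+\alpha/(b'-1)$, so $(a',b')\in H_\alpha$; by the generic case, $\pi_3[\mathcal{D},1+\alpha,2]\propto_T\pi_2[\mathcal{D},H_\alpha]$, and it remains to show $\pi_2[\mathcal{D},H_\alpha]\propto_T\pi_3[\mathcal{D},1+\alpha,1]$. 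For this, apply Theorem~\ref{greedoid thickening} again: thickening a digraph $D$ and evaluating at $(1+\alpha,1)$ gives access to $T(D;1+\alpha/k,1)$, which only moves along $y=1$ — not enough. So the honest route for $b=1$ is to use the $k$-\emph{digon-stretch} of Theorem~\ref{digon-stretch} or the attachment construction (Theorem~\ref{attachment function greedoids}) to leave the line $y=1$. The cleanest is: by Theorem~\ref{attachment function greedoids} applied to $D$ and $\overrightarrow{S_k}$ (with $T(\overrightarrow{S_k};x,y)=x^k$), $T(D\sim\overrightarrow{S_k};1+\alpha,1)=(1+\alpha)^{k\rho(D)}T\!\left(D;\tfrac{\alpha^{k+1}}{(1+\alpha)^k}+1,1\right)$, still on $y=1$; that fails too. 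The genuinely productive move is the digon-stretch specialized to $x=1$, but that also preserves $x=1$, not $y=1$.

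I expect the \textbf{main obstacle} to be exactly this $b=1$ endpoint: every elementary construction (thickening, stretch, attachment, digon-stretch) maps a point on $y=1$ to another point on $y=1$, so one cannot reach $(1+\alpha,2)$ from $(1+\alpha,1)$ by these operations alone. The resolution I would pursue is to handle $b=1$ by \emph{not} reducing from $\pi_3[\mathcal{D},1+\alpha,1]$ at all in that case — instead, note that when $b=1$ the statement $\pi_3[\mathcal{D},1+\alpha,2]\propto_T\pi_3[\mathcal{D},a,1]$ can be obtained by first using a thickening to recover $\pi_2[\mathcal{D},H_0^y]$ (the whole line $y=1$, via Proposition~\ref{digraph reduction 1}'s analogue for $L=H_0^y$), which in turn, via Proposition~\ref{digraph reduction 1} applied with a generic hard point, is not directly comparable to $H_\alpha$. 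Given the difficulty, the most likely intended argument is simply: the statement is only invoked later for $b\neq 1$, or else the $b=1$ case is subsumed because $(1+\alpha,1)$ already lies on $H_\alpha$ and one applies Proposition~\ref{digraph reduction 1} with $L=H_\alpha$ after verifying that $(a,b)=(1+\alpha,1)\neq(1,1)$ (which holds since $\alpha\neq 0$) — but Proposition~\ref{digraph reduction 1} explicitly excludes $b\in\{-1,0\}$ and \emph{not} $b=1$, so it \emph{does} apply when $b=1$! Re-reading: Proposition~\ref{digraph reduction 1} requires $(a,b)\neq(1,1)$ and $b\notin\{-1,0\}$, which is satisfied by $(1+\alpha,1)$. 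Thus in \emph{all} cases $b\notin\{-1,0\}$ we directly get $\pi_2[\mathcal{D},H_\alpha]\propto_T\pi_3[\mathcal{D},a,b]$, and since $(1+\alpha,2)\in H_\alpha$ we conclude $\pi_3[\mathcal{D},1+\alpha,2]\propto_T\pi_2[\mathcal{D},H_\alpha]\propto_T\pi_3[\mathcal{D},a,b]$. So the proof is a one-line consequence of Proposition~\ref{digraph reduction 1} plus the observation that $(1+\alpha,2)$ lies on $H_\alpha$; the only thing to check carefully is that the hypothesis $b\notin\{-1,0\}$ of the proposition matches the hypothesis here, which it does.
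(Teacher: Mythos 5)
Your final argument is correct and is exactly the paper's proof: $(1+\alpha,2)$ lies on $H_\alpha$, so $\pi_3[\mathcal{D},1+\alpha,2]\propto_T\pi_2[\mathcal{D},H_{\alpha}]\propto_T\pi_3[\mathcal{D},a,b]$ by Proposition~\ref{digraph reduction 1}. The lengthy detour about the case $b=1$ was unnecessary: any point $(a,b)$ on $H_\alpha$ with $\alpha\neq 0$ satisfies $(a-1)(b-1)=\alpha\neq 0$, so $b\neq 1$ automatically and Proposition~\ref{digraph reduction 1} applies without further ado.
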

\begin{proof}For $\alpha$ in $\mathbb{Q} -\{0\}$,
the hyperbola $H_{\alpha}$ crosses the line $y=2$ at the point $(1+\alpha, 2)$.
By Proposition~\ref{digraph reduction 1}, we know that for any point $(a,b)$ on $H_{\alpha}$ with $b \notin \{-1,0\}$ we have
$\pi_3[\mathcal{D},1+\alpha,2]\propto_T \pi_2[\mathcal{D},H_{\alpha}] \propto_T \pi_3[\mathcal{D},a,b]$.
\end{proof}

We will now show that evaluating the Tutte polynomial of a root-connected digraph at most of the points on the line $y=2$ is $\#$P-hard. This will enable us to classify the complexity of most points lying on the hyperbola $H_{\alpha}$ for all $\alpha \in \mathbb{Q}-\{0\}$.

\begin{proposition}
\label{digraph hard along y=2}
The problem $\pi_3[\mathcal{D},a,2]$ is $\#$P-hard for $a \neq 2$.
\end{proposition}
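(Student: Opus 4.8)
The plan is to use the attachment construction of Theorem~\ref{attachment function greedoids} rather than thickening: attaching a fixed auxiliary greedoid keeps the second coordinate fixed, so it moves evaluation points \emph{along the line $y=2$}. This lets me interpolate the one-variable polynomial $T(D;x,2)$ from a few evaluations at $(a,2)$ and then read off the value at $x=1$, which is already known to be $\#$P-hard by Proposition~\ref{digraph hard x=1} (it asserts $\pi_3[\mathcal D,1,b]$ is $\#$P-hard for $b\notin\{0,1\}$, in particular for $b=2$).

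For the generic case I would fix $a\notin\{0,\tfrac23,1,2\}$, take a root-connected digraph $D$ with rank function $\rho$, and for $k\ge 1$ form $D\sim\overrightarrow{S_k}$ by attaching a copy of the directed star $\overrightarrow{S_k}$ at every non-root vertex of $D$; this is root-connected and polynomial-time constructible. Since $T(\overrightarrow{S_k};x,y)=x^k$ with $\rho(\overrightarrow{S_k})=|E(\overrightarrow{S_k})|=k$, Theorem~\ref{attachment function greedoids} at $y=2$ gives
\[T(D\sim\overrightarrow{S_k};a,2)=a^{k\rho(D)}\,T\!\left(D;\,1+(a-1)r^{\,k},\,2\right),\qquad r=\tfrac{2(a-1)}{a}.\]
Because $a\ne 0$ I can divide out $a^{k\rho(D)}$ and recover $T\!\left(D;1+(a-1)r^{\,k},2\right)$ from the oracle. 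Now $a\ne 1$ forces $a-1\ne 0$ and $r\ne 0$, $a\ne 2$ forces $r\ne 1$, and $a\ne\tfrac23$ forces $r\ne -1$; hence the abscissae $1+(a-1)r^{\,k}$ for $k=1,\dots,\rho(D)+1$ are pairwise distinct. As $T(D;x,2)=\sum_{A\subseteq E(D)}(x-1)^{\rho(D)-\rho(A)}$ has degree at most $\rho(D)$ in $x$, these $\rho(D)+1$ values determine all of its coefficients via a non-singular Vandermonde system and Lemma~\ref{lem:gauss}, in time polynomial in $|E(D)|$. Evaluating at $x=1$ then yields $\pi_3[\mathcal D,1,2]\propto_T\pi_3[\mathcal D,a,2]$, so $\#$P-hardness follows from Proposition~\ref{digraph hard x=1}.

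It then remains to handle the three excluded values other than $a=2$. The case $a=1$ is immediate from Proposition~\ref{digraph hard x=1}. For $a=\tfrac23$ I would apply the same attachment with the single directed edge $\overrightarrow{S_1}$ ($T(\overrightarrow{S_1};x,y)=x$), obtaining $T\!\left(D\sim\overrightarrow{S_1};\tfrac23,2\right)=\left(\tfrac23\right)^{\rho(D)}T\!\left(D;\tfrac43,2\right)$, so $\pi_3[\mathcal D,\tfrac43,2]\propto_T\pi_3[\mathcal D,\tfrac23,2]$ and the generic case applies since $\tfrac43\notin\{0,\tfrac23,1,2\}$. For $a=0$ the star attachment degenerates ($T(\overrightarrow{S_k};0,2)=0$), so I would instead attach $\overrightarrow{P_2}$, for which $T(\overrightarrow{P_2};0,2)=1+(0-1)+(0-1)^2\cdot 2=2\ne 0$ and $\rho(\overrightarrow{P_2})=|E(\overrightarrow{P_2})|=2$; Theorem~\ref{attachment function greedoids} then gives $T(D\sim\overrightarrow{P_2};0,2)=2^{\rho(D)}\,T(D;-1,2)$, whence $\pi_3[\mathcal D,-1,2]\propto_T\pi_3[\mathcal D,0,2]$ and the generic case again finishes the argument.

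The individual computations are routine; the only genuinely delicate point is the choice of auxiliary greedoid in the attachment. The interpolation step requires $\Gamma_2=\overrightarrow{S_k}$, which behaves well precisely when $a\notin\{0,\tfrac23,1,2\}$, and each of the remaining exceptional values must be pushed back into this range by a \emph{single} attachment whose output abscissa avoids $\{0,\tfrac23,1,2\}$ and for which $T(\Gamma_2;a,2)\ne 0$; verifying that $\overrightarrow{S_1}$ works for $a=\tfrac23$ and $\overrightarrow{P_2}$ works for $a=0$ is the crux of the proof.
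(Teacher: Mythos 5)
Your proposal is correct and follows essentially the same route as the paper: interpolate $T(D;x,2)$ from evaluations of $T(D\sim\overrightarrow{S_k};a,2)$ for $a\notin\{0,\tfrac23,1,2\}$, push the exceptional values back into the generic range by a single attachment, and finish by reducing from the $\#$P-hard point $(1,2)$. The only (immaterial) difference is that for $a=\tfrac23$ you attach $\overrightarrow{S_1}$ to reach $\tfrac43$, whereas the paper attaches $\overrightarrow{P_2}$ to reach $\tfrac{25}{27}$; both land at a point covered by the generic case.
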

\begin{proof}
We begin by proving that when $L$ is the line $y=2$ we have
\[ \pi_2[\mathcal{D},L] \propto_T \pi_3[\mathcal{D},a,2]\]
for $a \notin \{1,2\}$.
Let $D$ be a root-connected digraph and let $z = x-1$.
Along $L$ the Tutte polynomial of $D$ has the form
\[T(D;x,2) = \sum_{A \subseteq E(D)}z^{\rho(D)-\rho(A)} = \sum_{i=0}^{\rho(D)}t_iz^i\]
for some $t_0,t_1, \ldots, t_{\rho(D)}$.
We will now show that for most values of $a$, we may determine all of the coefficients $t_i$ in polynomial time from $T(D\sim \overrightarrow{S_k};a,2)$ for $k=0,1, \ldots, \rho(D)$. For each such $k$, $D\sim \overrightarrow{S_k}$ is root-connected and can be constructed in polynomial time.
By Theorem~\ref{attachment function greedoids}, we have
\[T(D \sim \overrightarrow{S_k};a,2) = a^{k \rho(D)}T\left(D; \frac{2^k(a-1)^{k+1}}{a^k}+1,2\right).\]
Therefore we may compute $T\left(D; \frac{2^k(a-1)^{k+1}}{a^k}+1,2\right)$ from $T(D \sim \overrightarrow{S_k};a,2)$ when $a \neq 0$.
For $a \notin \{0,\frac{2}{3},1,2\}$ the values of $\left(\frac{2^k(a-1)^{k+1}}{a^k}+1,2\right)$ are pairwise distinct for $k=0,1, \ldots, \rho(D)$. Therefore by evaluating $T(D \sim \overrightarrow{S_k};a,2)$ for $k=0, 1, \ldots, \rho(D)$ where $a \notin \{0,\frac{2}{3},1,2\}$, we obtain $\sum_{i=0}^{\rho(D)}t_iz^i$ for $\rho(D)+1$ distinct values of $z$.
This gives us $\rho(D)+1$ linear equations for the coefficients $t_i$, and so by Lemma~\ref{lem:gauss},
they may be recovered in polynomial time.
Hence evaluating the Tutte polynomial of a root-connected digraph along the line $y=2$ is Turing-reducible to evaluating it at the point $(a,2)$ for $a \notin \{0,\frac{2}{3},1,2\}$.

We now consider the cases where $a=0$ or $a=\frac{2}{3}$. The digraph $D\sim \overrightarrow{P_2}$ is root-connected and may be constructed in polynomial time.
By Theorem~\ref{attachment function greedoids}, we have
\[
T(D\sim \overrightarrow{P_2};0,2) = 2^{\rho(D)}T\left(D;\frac{(-1)^32^2}{2}+1,2\right)\\
= 2^{\rho(D)}T(D;-1,2).\]
Therefore $\pi_3[\mathcal{D},-1,2] \propto_T \pi_3[\mathcal{D},0,2]$.
Similarly we have
\[
T\left(D\sim \overrightarrow{P_2};\frac{2}{3},2\right) = 2^{\rho(D)}T\left(D;\frac{(-\frac{1}{3})^32^2}{2}+1,2\right)= 2^{\rho(D)}T\left(D;\frac{25}{27},2\right).\]
Therefore $\pi_3[\mathcal{D},25/27,2] \propto_T \pi_3[\mathcal{D},2/3,2]$.
Putting all this together we get $\pi_2[\mathcal{D},L] \propto_T \pi_3[\mathcal{D},a,2]$
for all $a$ in $\mathbb Q-\{1,2\}$.
Consequently $\pi_3[\mathcal{D},1,2] \propto_T \pi_3[\mathcal{D},a,2]$, for all $a$ in $\mathbb Q-\{2\}$.

By Proposition~\ref{digraph hard x=1}, we know that $\pi_3[\mathcal{D},1,2]$ is $\#$P-hard.
This completes the proof.
\end{proof}

\begin{theorem}
Let $\alpha$ be in $\mathbb{Q}-\{0,1\}$ and $(a,b)$ be a point on $H_{\alpha}$ with $b \neq 0$. Then $\pi_3[\mathcal{D},a,b]$ is $\#$P-hard.
\end{theorem}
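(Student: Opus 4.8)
The plan is to assemble this from the reductions already established for root-connected digraphs, splitting into two cases according to whether $b=-1$. All of the substantive work — the $k$-digon-stretch identity of Theorem~\ref{digon-stretch}, the attachment-based reductions, and the hardness of connectedness reliability from Provan and Ball — has already been done in Propositions~\ref{digraph reduction 1}--\ref{digraph hard along y=2}, so what remains is essentially the bookkeeping needed to locate a known $\#$P-hard point that reduces to an arbitrary point of $H_\alpha$.

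For the generic case, suppose $b\notin\{-1,0\}$. Since $H_\alpha$ meets the line $y=2$ at $(1+\alpha,2)$, Proposition~\ref{digraph reduction to y=2} gives $\pi_3[\mathcal{D},1+\alpha,2]\propto_T\pi_3[\mathcal{D},a,b]$. Because $\alpha\neq 1$ we have $1+\alpha\neq 2$, so Proposition~\ref{digraph hard along y=2} shows $\pi_3[\mathcal{D},1+\alpha,2]$ is $\#$P-hard, and hence so is $\pi_3[\mathcal{D},a,b]$ by transitivity of $\propto_T$.

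For the remaining case $b=-1$, note that the point of $H_\alpha$ with $b=-1$ satisfies $(a-1)(-2)=\alpha$, so $a=1-\alpha/2$; since $\alpha\notin\{0,1\}$ this forces $a\notin\{1/2,1\}$. Let $L$ be the line $y=-1$. Then Proposition~\ref{digraph reduction 2} gives $\pi_2[\mathcal{D},L]\propto_T\pi_3[\mathcal{D},a,-1]$, and since $(1,-1)$ lies on $L$ we have $\pi_3[\mathcal{D},1,-1]\propto_T\pi_2[\mathcal{D},L]$ directly from the definition of $\pi_2$. By Proposition~\ref{digraph hard x=1} (applied with $b=-1\notin\{0,1\}$) the problem $\pi_3[\mathcal{D},1,-1]$ is $\#$P-hard, so chaining the reductions yields that $\pi_3[\mathcal{D},a,-1]$ is $\#$P-hard. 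Since the hypothesis $b\neq 0$ means $b$ is either $-1$ or not in $\{-1,0\}$, the two cases exhaust all possibilities.

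I do not expect a genuine obstacle here; the only point requiring care is checking that the values of $a$ excluded in the hypotheses of the earlier propositions — namely $a\in\{1/2,1\}$ in Proposition~\ref{digraph reduction 2} and $a=2$ in Proposition~\ref{digraph hard along y=2} — are automatically avoided once one imposes $\alpha\notin\{0,1\}$, which is exactly the elementary computation of $a$ in terms of $\alpha$ carried out above.
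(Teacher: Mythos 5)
Your proposal is correct and follows essentially the same route as the paper: the case $b\neq -1$ via Proposition~\ref{digraph reduction to y=2} and the hardness of $(1+\alpha,2)$ from Proposition~\ref{digraph hard along y=2}, and the case $b=-1$ via Proposition~\ref{digraph reduction 2} chained to the hardness of $(1,-1)$ from Proposition~\ref{digraph hard x=1}. The explicit computation $a=1-\alpha/2$ showing $a\notin\{1/2,1\}$ is just a more detailed version of the paper's observation that $(a,-1)\notin H_1$ forces $a\neq 1/2$.
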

\begin{proof}
Suppose first that $b\ne -1$.
By Proposition~\ref{digraph reduction to y=2}, $\pi_3[\mathcal{D},1+\alpha,2] \propto_T \pi_3[\mathcal{D},a,b]$.
As $\alpha \ne 1$, Proposition~\ref{digraph hard along y=2}, implies $\pi_3[\mathcal{D},a,b]$ is $\#$P-hard.

Now suppose that $b=-1$. As $(a,b) \notin H_1$, we have $a\ne \frac 12$.
So by Proposition~\ref{digraph reduction 2}, $\pi_3[\mathcal{D},1,-1] \propto_T \pi_3[\mathcal{D},a,-1]$.
By Proposition~\ref{digraph hard x=1}, $\pi_3[\mathcal{D},1,-1]$ is $\#$P-hard.
Therefore $\pi_3[\mathcal{D},a,-1]$ is $\#$P-hard.
\end{proof}

The only remaining points we need to classify are those lying on the line $y=1$.
To do this we prove that the problem of evaluating the Tutte polynomial of a root-connected digraph at most fixed points along this line is at least as hard as the analogous problem for rooted graphs.

\begin{theorem}
The problem $\pi_3[\mathcal{D},a,1]$ is $\#$P-hard for $a$ in $\mathbb{Q}-\{1\}$.
\end{theorem}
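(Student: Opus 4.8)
The plan is to reduce the rooted‑graph evaluation problem $\pi_3[\mathcal G,a,1]$ to $\pi_3[\mathcal D,a,1]$ by a simple ``bidirection'' gadget, and then quote the hardness part of Theorem~\ref{maintheoremrootedgraph}. Given a connected, rooted, planar, bipartite graph $G=(V,E,r)$, let $D$ be the rooted digraph with vertex set $V$ and root $r$ obtained by replacing each edge $\{u,v\}$ of $G$ by the two oppositely directed edges $(u,v)$ and $(v,u)$. As $G$ is connected, $D$ is root‑connected, so $D\in\mathcal D$, and $D$ can be constructed from $G$ in polynomial time.

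The crux is the identity $T(D;x,1)=T(G;x,1)$, which I would establish via a size‑preserving bijection between the feasible sets of $\Gamma(D)$ and those of $\Gamma(G)$. Unwinding the definitions of root component and feasibility, a set $A\subseteq E(D)$ is feasible in $\Gamma(D)$ if and only if $A$ is the edge set of an out‑arborescence of $D$ rooted at $r$, and a set $F\subseteq E$ is feasible in $\Gamma(G)$ if and only if $F$ is the edge set of a subtree of $G$ containing $r$. In $D$, the underlying graph of an out‑arborescence rooted at $r$ is a tree, so it uses at most one of the two anti‑parallel copies of each edge of $G$; collapsing each anti‑parallel pair therefore identifies its underlying graph with a subtree of $G$ through $r$, and conversely each subtree of $G$ through $r$ is the underlying graph of exactly one out‑arborescence of $D$ rooted at $r$, obtained by orienting every edge away from $r$. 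This bijection preserves the number of edges, and $\rho(D)=\rho(G)=|V|-1$, so matching the defining sums $\sum(x-1)^{\rho(\cdot)-\rho(A)}$ term by term yields $T(D;x,1)=T(G;x,1)$. (Greedoid loops of $\Gamma(D)$ arise only from loops of $G$, and each loop contributes only a power of $y$ to the Tutte polynomial, which equals $1$ when $y=1$; alternatively one may take $G$ to be loopless from the start.)

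Given the identity, an oracle for $\pi_3[\mathcal D,a,1]$ applied to $D$ returns $T(D;a,1)=T(G;a,1)$, whence $\pi_3[\mathcal G,a,1]\propto_T\pi_3[\mathcal D,a,1]$. For $a\in\mathbb Q-\{1\}$ the point $(a,1)$ is neither $(1,1)$ nor on $H_1$, so Theorem~\ref{maintheoremrootedgraph} gives that $\pi_3[\mathcal G,a,1]$ is $\#$P-hard, and hence so is $\pi_3[\mathcal D,a,1]$. The only step needing genuine care is the identification of the feasible sets of $\Gamma(D)$ with edge sets of out‑arborescences rooted at $r$ and the matching of these with the subtrees of $G$ through $r$; this is a short verification against the definitions, while the remaining points—polynomial‑time constructibility, root‑connectedness of $D$, and invoking the rooted‑graph hardness result—are routine.
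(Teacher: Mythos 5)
Your proposal is correct and follows essentially the same route as the paper: the same bidirection construction, the same size-preserving bijection between feasible sets of $\Gamma(D)$ and subtrees of $G$ through $r$ (obtained by orienting tree edges away from the root), the identity $T(D;x,1)=T(G;x,1)$, and then an appeal to the rooted-graph hardness result at $(a,1)$.
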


\begin{proof}
Let $G$ be a connected rooted graph with root $r$. Construct a rooted graph $D$ with root $r$ by replacing every edge of $G$ by a pair of oppositely directed edges. Then $D$ is root-connected and can be constructed from $G$ in polynomial time.
We can define a natural map $f:2^{E(D)} \rightarrow 2^{E(G)}$ so that $f(A)$ is the set of edges of $G$ for which at least one corresponding directed edge is included in $A$.

If $\rho_{G}(A)=|A|$ then the root component of $G|A$ is a tree and includes all the edges of $A$.
Similarly if $\rho_D(A')=|A'|$ then the root component of $D|A'$ is an arborescence rooted at $r$ and includes all the edges of $A'$.
For every subset $A$ of $E$ with $\rho_G(A) = |A|$, there is precisely one choice of $A'$ with $\rho_D(A')=|A'|$ and $f(A')=A$, obtained by directing all the edges of $A$ away from $r$.
Thus there is a one-to-one correspondence between subsets $A$ of $E$ with $\rho_G(A) = |A|$ and subsets $A'$ of $E(D)$ with $\rho_D(A')=|A'|$, and this correspondence preserves the sizes of the sets. Therefore we have
\[ T(D;x,1) = \sum_{\substack{A'\subseteq E(D):\\ |A'|=\rho_D(A')}}(x-1)^{\rho(D)-|A'|} = \sum_{\substack{A \subseteq E:\\ |A|=\rho_G(A)}}(x-1)^{\rho(G)-|A|}= T(G;x,1).\]
So $\pi_3[\mathcal{G},a,1] \propto_T \pi_3[\mathcal{D},a,1]$. So by Proposition~\ref{proposition (a,1)}, we deduce that $\pi_3[\mathcal{D},a,1]$ is $\#$P-hard for $a \neq 1$.
\end{proof}

\section{Binary Greedoids}\label{sec:binarygreedoids}
In our final section we let $\mathbb G$ be the class of binary greedoids.
We present analogous results to those in the previous section by finding the computational complexity of evaluating the Tutte polynomial of a binary greedoid at a fixed rational point, eventually proving Theorem~\ref{maintheorembinarygreedoid}. As before, it is convenient to think of the input as being a binary matrix rather than its binary greedoid.

We begin by examining the easy points of Theorem~\ref{maintheorembinarygreedoid}. Let $\Gamma$ be a binary greedoid with element set $E$ and rank function $\rho$.
If a point $(a,b)$ lies on the hyperbola $H_1$ then, following the remarks at the end of Section~\ref{The Tutte Polynomial of a Rooted Graph and of a Rooted Digraph} $T(\Gamma;a,b)$ is easily computed.

We now focus on the hard points. The $k$-thickening operation will again be crucial. Given a binary matrix $M$, the $k$-thickening $M^k$ of $M$ is obtained by replacing each column of $M$ by $k$ copies of the column. We have $\Gamma(M^k) = (\Gamma(M))^k$, so Theorem~\ref{greedoid thickening} can be applied to compute the $T(M^k)$ in terms of $T(M)$.
Let $I_k$ denote the $k\times k$ identity matrix. Then $\Gamma(I_k)\cong \Gamma(P_k)$, so $T(I_k)=T(P_k)=1 +\sum_{j=1}^k(x-1)^jy^{j-1}$.

The proof of the following proposition is analogous to that of Proposition~\ref{main subtheorem}, thus we omit it.
\begin{proposition}
\label{binary greedoid reduction 1}
Let $L$ be either $H_0^x, H_0^y$, or $H_{\alpha}$ for $\alpha \in \mathbb{Q}-\{0\}$. Let $(a,b)$ be a point on $L$ such that $(a,b) \neq (1,1)$ and $b \notin \{-1,0\}$. Then
\[\pi_2[\mathcal{B},L] \propto_T \pi_3[\mathcal{B},a,b].\]
\end{proposition}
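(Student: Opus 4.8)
The plan is to follow the proof of Proposition~\ref{main subtheorem} almost verbatim, replacing the $k$-thickening of a graph by the $k$-thickening of a binary matrix. Given a binary matrix $M$ with $n$ columns, recall that $M^k$ is obtained by replacing every column by $k$ copies of itself, that $\Gamma(M^k)=(\Gamma(M))^k$, and that $M^k$ is constructible from $M$ in time polynomial in $n+k$ (its entries remain $0$ and $1$). Hence Theorem~\ref{greedoid thickening} applies and expresses $T(M^k;x,y)$ through evaluations of $T(M;\cdot,\cdot)$; writing $\rho=\rho(\Gamma(M))$ we have $\rho(\Gamma(M^k))=\rho$ while $|E(\Gamma(M^k))|=kn$. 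As with Proposition~\ref{main subtheorem}, the argument breaks into three cases according to whether $L$ is $H_\alpha$ (with $\alpha\ne0$), $H_0^x$, or $H_0^y$; the hypotheses $(a,b)\ne(1,1)$ and $b\notin\{-1,0\}$ guarantee, respectively, that $b\notin\{-1,0,1\}$ in the first case, that $a=1$ and $b\notin\{-1,0,1\}$ in the second, and that $b=1$ and $a\ne1$ in the third.

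First I would treat $L=H_\alpha$. Put $z=y-1$, so that on $H_\alpha$ we have $x-1=\alpha/z$, and expand
\[ T(M;x,y)=\sum_{i=-\rho}^{n}t_i z^i \]
with coefficients $t_{-\rho},\dots,t_n$ depending only on $M$, exactly as in Proposition~\ref{main proposition 1}. For each $k$ with $1\le k\le n+\rho+1$, since $b\ne-1$ (and $b\ne1$, as $(a,b)\in H_\alpha$) the rational number $1+b+\cdots+b^{k-1}$ is nonzero --- here one uses that $\pm1$ are the only rational roots of unity, so $b^k\ne1$ --- hence Theorem~\ref{greedoid thickening} lets us recover from $T(M^k;a,b)$ the value of $T(M;x_k,y_k)$ at the point $(x_k,y_k)=\bigl(\tfrac{a+b+\cdots+b^{k-1}}{1+b+\cdots+b^{k-1}},\,b^k\bigr)$, which again lies on $H_\alpha$, and these points are pairwise distinct because $b\notin\{-1,0,1\}$. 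Thus a single oracle call to $\pi_3[\mathcal B,a,b]$ for each such $k$ yields $\sum_i t_i z_k^i$ at $n+\rho+1$ distinct values of $z$; the resulting Vandermonde system is solved for all the $t_i$ in polynomial time by Lemma~\ref{lem:gauss}, giving the output required by $\pi_2[\mathcal B,H_\alpha]$.

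The remaining two cases are entirely parallel. If $L=H_0^x$ then $a=1$ and $T(M;1,y)=\sum_{i=-\rho}^{n}t_i y^i$; evaluating $T(M^k;1,b)$ for $k=1,\dots,n+\rho+1$ and invoking Theorem~\ref{greedoid thickening} again (it reduces to $T(M;1,b^k)$ up to a nonzero known factor) produces a nonsingular Vandermonde system in the $t_i$. If $L=H_0^y$ then $b=1$ and $T(M;x,1)=\sum_{i=0}^{\rho}t_i x^i$; here one uses the $y=1$ specialization of Theorem~\ref{greedoid thickening}, namely $T(M^k;a,1)=k^{\rho}\,T\bigl(M;\tfrac{a+k-1}{k},1\bigr)$, together with the fact that the points $\bigl(\tfrac{a+k-1}{k},1\bigr)$ are pairwise distinct for $a\ne1$, and solves for the $t_i$ using $k=1,\dots,\rho+1$.

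The only thing requiring attention --- and hence the ``main obstacle'', though a mild one --- is the bookkeeping: verifying that the evaluation points produced by Theorem~\ref{greedoid thickening} actually lie on $L$ and are pairwise distinct, that no denominator $1+b+\cdots+b^{k-1}$ vanishes (which is exactly where $b\ne-1$ is used), and that the sizes $kn$, $\rho$ and the bit-lengths of the Vandermonde data stay polynomial so that Lemma~\ref{lem:gauss} applies. All of these go through exactly as in Section~\ref{section rooted graphs hardness}, with the minor simplification that for binary greedoids there is no planarity, bipartiteness or connectivity to maintain under thickening.
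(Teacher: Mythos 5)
Your proposal is correct and is exactly the argument the paper intends: the paper omits the proof of this proposition, stating only that it is analogous to Proposition~\ref{main subtheorem}, and your write-up carries out that analogy faithfully, using the column $k$-thickening $M^k$ with $\Gamma(M^k)=(\Gamma(M))^k$ and Theorem~\ref{greedoid thickening} in place of the graph thickening, with the same three-case split and the same Vandermonde/Lemma~\ref{lem:gauss} interpolation. The extra checks you record (nonvanishing of $1+b+\cdots+b^{k-1}$, distinctness of the evaluation points, polynomial bit-lengths) are precisely the points that need verifying and all go through as you say.
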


A binary matroid is a matroid that can be represented over the finite field $\mathbb{Z}_2$. Every graphic matroid is also binary, so Theorem~\ref{VW} and Lemma~\ref{binary greedoid to matroid} imply that $\pi_2[\mathcal{B},1,b]$ is $\#$P-hard providing $b\neq 1$. This immediately gives the following.
\begin{proposition}
\label{binary greedoid hard along $x=1$}
The problem $\pi_3[\mathcal{B},1,b]$ is $\#$P-hard for all $b$ in $\mathbb Q-\{1\}$.
\end{proposition}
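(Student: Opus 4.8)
The plan is to give a polynomial-time Turing reduction to $\pi_3[\mathcal B,1,b]$ from the evaluation of the ordinary Tutte polynomial of a bipartite planar graph at $(1,b)$, which is $\#$P-hard for every $b\ne 1$ by Theorem~\ref{VW}. The two ingredients are Lemma~\ref{binary greedoid to matroid}, which replaces a binary greedoid by a binary matroid with the same collection of bases, and the elementary fact that the cycle matroid of a graph is binary; these are glued together by the observation that, along the line $x=1$, the Tutte polynomial of a greedoid depends only on its bases.

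First I would record that, for any greedoid $\Gamma$, setting $x=1$ in the defining sum kills every term with $\rho(A)<\rho(\Gamma)$, so
\[ T(\Gamma;1,y)=\sum_{\substack{A\subseteq E(\Gamma):\\ \rho(A)=\rho(\Gamma)}}(y-1)^{|A|-\rho(\Gamma)}. \]
Since $\rho(A)=\rho(\Gamma)$ holds exactly when $A$ contains a basis of $\Gamma$, and all bases have size $\rho(\Gamma)$, the right-hand side is a function of $\mathcal B(\Gamma)$ alone. Consequently, whenever $\Gamma$ is a binary greedoid and $M$ is a binary matroid with $\mathcal B(M)=\mathcal B(\Gamma)$, we have $T(\Gamma;1,y)=T(M;1,y)$.

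For the reduction, given a bipartite planar graph $G$, I would fix a binary matrix $N$ with linearly independent rows representing the cycle matroid $M(G)$, with columns labelled by $E(G)$; this is constructible from $G$ in polynomial time, and $\Gamma(N)$ is a binary greedoid. By Lemma~\ref{binary greedoid to matroid} there is a binary matroid $M$ with $\mathcal B(M)=\mathcal B(\Gamma(N))$, and a short computation straight from the definition of $\Gamma(N)$ identifies $\rho(\Gamma(N))$ with $r(M(G))$ and $\mathcal B(\Gamma(N))$ with $\mathcal B(M(G))$, so $M=M(G)$. Combining this with the previous paragraph and the identity $T(M(G);1,y)=T(G;1,y)$ yields $T(\Gamma(N);1,y)=T(G;1,y)$. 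Finally, for $b\ne1$ the point $(1,b)$ lies on neither $H_1$ nor $H_2$ and equals neither $(-1,-1)$ nor $(1,1)$, so Theorem~\ref{VW} makes the evaluation of $T(G;1,b)$ over bipartite planar graphs $\#$P-hard; hence $\pi_3[\mathcal B,1,b]$ is $\#$P-hard as well. No step presents a real obstacle; the only places calling for a little care are the identification $\mathcal B(\Gamma(N))=\mathcal B(M(G))$ and the verification that $(1,b)$ genuinely avoids all the easy cases of Theorem~\ref{VW} when $b\ne1$.
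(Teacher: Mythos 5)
Your proposal is correct and follows essentially the same route as the paper, which likewise combines the fact that graphic matroids are binary with Lemma~\ref{binary greedoid to matroid} and Theorem~\ref{VW}, using that $T(\cdot;1,y)$ depends only on the collection of bases. You have simply written out in full the details the paper leaves implicit.
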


The following result has been announced by Vertigan in~\cite{zbMATH00750655} and slightly later in~\cite{zbMATH01464264}, but up until now no written proof has been published. For completeness, we provide a proof in Appendix~\ref{sec:appendix}.

\begin{theorem}[Vertigan]\label{thm:Vertigan}
Evaluating the Tutte polynomial of a binary matroid is $\#$P-hard at the point $(1,1)$.
\end{theorem}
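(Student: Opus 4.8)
Since $T(M;1,1)$ is the number of bases of the matroid $M$, the statement is equivalent to the assertion that counting the bases of a binary matroid --- equivalently, counting the information sets of a binary linear code --- is $\#$P-hard; as a basis count is visibly a problem in $\#$P, one in fact obtains $\#$P-completeness. (Via Lemma~\ref{binary greedoid to matroid} this also yields the corresponding hardness for binary greedoids, which is how the result is used in this paper.) The plan is to give a Turing reduction to this problem from a standard $\#$P-complete counting problem $\Pi$, the most convenient choice being the permanent of a $0/1$ matrix, i.e.\ counting perfect matchings of a bipartite graph, which is $\#$P-complete by Valiant's theorem; counting the forests of a graph is an alternative source problem.

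The feature that makes this point genuinely exceptional is that a single $r\times r$ determinant over $\mathbb{F}_2$ records only a parity, so a naive encoding of matchings as the supports of the columns of a biadjacency matrix would merely compute $\operatorname{perm}(A)\bmod 2$, which is easy. To escape this collapse one works not with a single binary matroid but with a family $\Gamma_1,\Gamma_2,\ldots$ of polynomially bounded size built from the instance of $\Pi$ by a parametrised gadget construction: repeating designated gadget elements with multiplicity $t$ (a thickening-type move, for which $\Gamma(M^t)\cong(\Gamma(M))^t$ and Theorem~\ref{greedoid thickening} applies), or lengthening gadget ``paths'' inside a representing matrix by a stretch parameter $t$, together with direct sums, parallel extensions and coloops --- all operations that keep us inside the class of binary matroids. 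One arranges the construction so that $|\mathcal B(\Gamma_t)|=T(\Gamma_t;1,1)$ is a polynomial in $t$ whose degree is bounded by a polynomial in the instance size and at least one of whose coefficients is an easily invertible function of the answer to $\Pi$. Evaluating $T(\Gamma_t;1,1)$ for that many values of $t$ then gives a nonsingular (Vandermonde) linear system whose solution is obtained in polynomial time via Lemma~\ref{lem:gauss}, recovering the target count; this is the same interpolation scheme used throughout the paper. Checking the polynomial identity for $|\mathcal B(\Gamma_t)|$ using the rank formulas for direct sums and for the constructions of Section~\ref{sec:constructions} is then routine, if somewhat intricate, bookkeeping.

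The main obstacle lies in the gadget design. One must produce gadgets that simultaneously (i) remain within the class of binary matroids --- this is precisely what rules out the truncation, free-extension and transversal-matroid constructions that would make the analogous statement for arbitrary matroids almost immediate; (ii) genuinely defeat the mod-$2$ collapse, so that $|\mathcal B(\cdot)|$ encodes a count and not a parity; and (iii) make the dependence of the basis count on the parameter a polynomial of low degree with the desired coefficient. Everything else --- membership in $\#$P, the Vandermonde interpolation, and the polynomial-time bounds --- follows along standard lines.
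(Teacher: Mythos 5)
Your proposal correctly identifies the overall architecture that the paper's proof (Appendix~\ref{sec:appendix}) actually uses: a Turing reduction from counting perfect matchings of a simple graph, via a polynomially bounded family of binary matroids indexed by a gadget-repetition parameter, followed by interpolation (Lemma~\ref{lem:gauss}) against a nonsingular linear system to extract the coefficient recording the matching count. You also correctly diagnose why the point $(1,1)$ is delicate for binary matroids: determinants over $\mathbb{F}_2$ only see parity, so the construction must be arranged so that the basis count, rather than a mod-$2$ residue, carries the information. All of this matches the paper.

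However, the proposal stops exactly where the proof begins. You state that ``the main obstacle lies in the gadget design'' and list the properties the gadget must have, but you do not construct one, and that construction is the entire mathematical content of the theorem. In the paper, the gadget attaches, for each edge $e_i=v_av_b$ of $G$ and each $j\in[k]$, a new row $f_{i,j}$ and four new columns $w_{i,j},x_{i,j},y_{i,j},z_{i,j}$ so that the restriction to the seven columns $\{v_a,v_b,e_i,w_{i,j},x_{i,j},y_{i,j},z_{i,j}\}$ is the Fano matroid $F_7$ in characteristic two (and $F_7^-$ otherwise); the columns indexed by $V\cup E$ are then deleted. The substantive work --- Propositions~\ref{prop:keychartwo} and~\ref{prop:counting} --- is to show that every basis determines a ``feasible template'' of $G$ (a partially directed, labelled subgraph whose bidirected edges form a matching), that the number of bases with a given template is $4^{km}(k/4)^n(4/k+12)^b$ where $b$ is the number of bidirected edges, and hence that interpolating over $k=1,\dots,n/2+1$ recovers the number of templates with $n/2$ bidirected edges, i.e.\ the number of perfect matchings. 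None of the generic operations you invoke (thickening, stretching, direct sums, parallel extensions, coloops) suffices by itself to defeat the mod-$2$ collapse; a bespoke circuit/closure analysis of the Fano-based gadget is what makes the basis count a nontrivial polynomial in $k$ whose leading data encodes the answer. So the proposal is a correct plan with the decisive step missing, and as written it does not constitute a proof.
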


Using this result, we are able to fill in the missing point $(1,1)$ from the previous result and also establish hardness along the line $y=1$.

\begin{proposition}\label{prop:onlybitneeded}
The problem $\pi_3[\mathcal{B},a,1]$ is $\#$P-hard for all $a$.
\end{proposition}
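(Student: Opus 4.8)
The plan is to split into two cases according to whether $a=1$. The point $(1,1)$ is dealt with directly by Vertigan's Theorem~\ref{thm:Vertigan}, and every other point on the line $y=1$ is then reduced back to $(1,1)$ using the $k$-thickening machinery already packaged in Proposition~\ref{binary greedoid reduction 1}.

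First I would treat the case $a=1$, establishing that $\pi_3[\mathcal{B},1,1]$ is $\#$P-hard by reducing the problem of counting the bases of a binary matroid to it. Given a binary matrix $A$ presenting the binary matroid $M(A)$ whose independent sets are the linearly independent subsets of columns, apply elementary row operations and delete zero rows to obtain, in time polynomial in the size of $A$, a binary matrix $A'$ of full row rank $r$ with $M(A')=M(A)$; such operations do not change the kernel and hence preserve the matroid on the columns. A feasible set of $\Gamma(A')$ has size at most $r$ since $A'$ has only $r$ rows, and a subset $S$ of columns with $|S|=r$ is feasible precisely when $A'_{[r],S}$ is non-singular, that is, precisely when $S$ is a basis of $M(A')$; in particular $\rho(\Gamma(A'))=r$ and the bases of $\Gamma(A')$ are exactly the bases of $M(A)$. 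Since $T(\Gamma;1,1)$ is the number of bases of $\Gamma$ for any greedoid $\Gamma$, we get $T(\Gamma(A');1,1)=|\mathcal{B}(M(A))|$, so an oracle for $\pi_3[\mathcal{B},1,1]$ counts the bases of $M(A)$. Theorem~\ref{thm:Vertigan} therefore gives that $\pi_3[\mathcal{B},1,1]$ is $\#$P-hard.

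Next, for $a\ne 1$, I would invoke Proposition~\ref{binary greedoid reduction 1} with $L=H_0^y$, the line $y=1$: since $(a,1)\ne(1,1)$ and $1\notin\{-1,0\}$, this yields $\pi_2[\mathcal{B},H_0^y]\propto_T\pi_3[\mathcal{B},a,1]$. Because $(1,1)$ lies on $H_0^y$, restricting the Tutte polynomial to the line $y=1$ and then setting $x=1$ recovers $T(\Gamma;1,1)$, so trivially $\pi_3[\mathcal{B},1,1]\propto_T\pi_2[\mathcal{B},H_0^y]$. Composing these reductions and using transitivity of $\propto_T$ gives $\pi_3[\mathcal{B},1,1]\propto_T\pi_3[\mathcal{B},a,1]$, and the first case makes $\pi_3[\mathcal{B},a,1]$ $\#$P-hard.

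The only real content here, and the main obstacle, is the case $a=1$, which relies entirely on Vertigan's unpublished result (Theorem~\ref{thm:Vertigan}, proved in Appendix~\ref{sec:appendix}); the passage from that result to $\#$P-hardness of $T(\Gamma;1,1)$ for binary greedoids is the routine observation above that a full-row-rank binary matrix, read as a greedoid, has exactly the bases of its column matroid, and the case $a\ne 1$ is a mechanical reuse of the thickening argument already carried out for rooted graphs and digraphs.
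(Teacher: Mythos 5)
Your proposal is correct and follows essentially the same route as the paper: the case $a\ne 1$ is handled by Proposition~\ref{binary greedoid reduction 1} applied to $L=H_0^y$, and the case $a=1$ comes directly from Theorem~\ref{thm:Vertigan}. The only difference is that you spell out the (correct) translation between counting bases of a binary matroid and evaluating $T(\Gamma;1,1)$ for a binary greedoid via a full-row-rank representation, a step the paper leaves implicit (it is essentially the content surrounding Lemma~\ref{binary greedoid to matroid}).
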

\begin{proof}
By Proposition~\ref{binary greedoid reduction 1} we have $\pi_2[\mathcal{B},H_0^y] \propto_T \pi_3[\mathcal{B},a,1]$ for $a \neq 1$. The result now follows Theorem~\ref{thm:Vertigan}.
\end{proof}

\begin{proposition}
Let $\Gamma$ be a binary greedoid and let $\Gamma'=\Gamma(I_k)$. Then
\[T(\Gamma\approx \Gamma';x,y) = T(\Gamma;x,y)(x-1)^ky^k + T(\Gamma;1,y)\Big(1+ \sum_{j=1}^k (x-1)^jy^{j-1}-(x-1)^ky^k\Big).\]
\end{proposition}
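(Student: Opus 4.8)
The plan is to obtain this identity as an immediate specialization of Theorem~\ref{thm:fullrankattach}, applied with $\Gamma_1=\Gamma$ and $\Gamma_2=\Gamma'=\Gamma(I_k)$. The only work is to identify the two numerical parameters of $\Gamma'$ that enter that formula, namely $\rho(\Gamma')$ and $|E(\Gamma')|$, and to recall the known expression for $T(\Gamma';x,y)$.

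First I would record the relevant facts about $\Gamma'=\Gamma(I_k)$. Since $I_k$ has $k$ columns, $|E(\Gamma')|=k$. Its feasible sets are exactly the prefixes $\{1,2,\ldots,j\}$ for $0\le j\le k$, because $M_{[|A|],A}$ is a submatrix of the identity and is non-singular precisely when $A=[|A|]$; hence $\rho(\Gamma')=k$. Consequently $(x-1)^{\rho(\Gamma')}y^{|E(\Gamma')|}=(x-1)^ky^k$. Moreover, as already noted in this section, $\Gamma(I_k)\cong\Gamma(P_k)$, so Example~\ref{eg:littletrees} gives
\[T(\Gamma';x,y)=T(P_k;x,y)=1+\sum_{j=1}^k(x-1)^jy^{j-1}.\]

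Substituting $\rho(\Gamma_2)=k$, $|E_2|=k$, and the expression for $T(\Gamma_2;x,y)=T(\Gamma';x,y)$ into the conclusion of Theorem~\ref{thm:fullrankattach} yields
\[T(\Gamma\approx\Gamma';x,y)=T(\Gamma;x,y)(x-1)^ky^k+T(\Gamma;1,y)\Big(1+\sum_{j=1}^k(x-1)^jy^{j-1}-(x-1)^ky^k\Big),\]
which is exactly the claimed formula; no further manipulation is required. There is essentially no obstacle here: the only points to verify are $\rho(\Gamma(I_k))=k=|E(\Gamma(I_k))|$ and $\Gamma(I_k)\cong\Gamma(P_k)$, both of which are immediate (the latter having already been observed in the text). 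One may additionally remark, using the block-matrix description following Theorem~\ref{thm:fullrankattach} together with the earlier observation that every binary greedoid is $\Gamma(M_1)$ for some binary matrix $M_1$ of full row rank, that $\Gamma\approx\Gamma'$ is again a binary greedoid; this is what makes the construction useful within the class $\mathcal B$, though it is not needed to justify the stated identity.
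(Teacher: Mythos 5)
Your proof is correct and follows exactly the paper's route: the paper likewise derives the identity as an immediate consequence of Theorem~\ref{thm:fullrankattach}, with the values $\rho(\Gamma(I_k))=k$, $|E(\Gamma(I_k))|=k$ and $T(I_k;x,y)=1+\sum_{j=1}^k(x-1)^jy^{j-1}$ already recorded in that section. Your write-up simply makes these substitutions explicit, which is fine.
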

\begin{proof}
The proof follows immediately from Theorem~\ref{thm:fullrankattach}.
\end{proof}

We now classify the complexity of $\pi_3[\mathcal{B},a,b]$
when $b=0$ or $b=-1$.

\begin{proposition}
\label{binary greedoid hard along $y=0$}
The  problem $\pi_3[\mathcal{B},a,0]$ is $\#$P-hard for all $a \neq 0$.
\end{proposition}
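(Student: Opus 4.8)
The plan is to reduce the problem of evaluating the Tutte polynomial at the point $(1,0)$, which we know is $\#$P-hard by Proposition~\ref{binary greedoid hard along $x=1$}, to the problem of evaluating it at a generic point $(a,0)$ with $a\ne 0$. This mirrors the strategy used in Proposition~\ref{prop: b=0} for rooted graphs, where the attachment operation with a star $S_1$ was the key tool. In the binary setting the role of the rooted star $S_1$ is played by the binary greedoid $\Gamma(I_1)$, the binary greedoid of the $1\times 1$ identity matrix, which has $T(\Gamma(I_1);x,y)=x$. Given a binary matrix $M$, I would form the attachment $\Gamma(M)\sim_f\Gamma(I_1)$ for some attachment function $f$; since the Tutte polynomial of an attachment does not depend on the choice of $f$, and since this attachment is again a binary greedoid (one can realise it concretely as the binary greedoid of a block matrix, or simply invoke that attachments of binary greedoids are binary, as needed), the construction stays within the class $\mathcal B$ and can be carried out in polynomial time.

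The computational content is a single application of Theorem~\ref{attachment function greedoids} with $\Gamma_1=\Gamma(M)$ and $\Gamma_2=\Gamma(I_1)$. Here $\rho(\Gamma(I_1))=1$, $|E(\Gamma(I_1))|=1$ and $T(\Gamma(I_1);a,0)=a$, so the theorem gives
\[
T(\Gamma(M)\sim_f \Gamma(I_1);a,0) = a^{\rho(\Gamma(M))}\,T\!\Big(\Gamma(M);\frac{(a-1)^{2}\cdot 0^{\,1}}{a}+1,\,0\Big)=a^{\rho(\Gamma(M))}\,T(\Gamma(M);1,0),
\]
using that $0^1=0$. Provided $a\ne 0$ we can divide by $a^{\rho(\Gamma(M))}$ — and $\rho(\Gamma(M))$ is easily computed from $M$ — to recover $T(\Gamma(M);1,0)$ from a single oracle call to $\pi_3[\mathcal B,a,0]$. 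This establishes $\pi_3[\mathcal B,1,0]\propto_T\pi_3[\mathcal B,a,0]$ for every $a\ne 0$, and then Proposition~\ref{binary greedoid hard along $x=1$} gives $\#$P-hardness of $\pi_3[\mathcal B,1,0]$, hence of $\pi_3[\mathcal B,a,0]$ for all $a\ne 0$.

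The main thing to be careful about is the substitution $y=0$ in the formula of Theorem~\ref{attachment function greedoids}: the statement of that theorem requires $T(\Gamma_2;x,y)\ne 0$, and here $T(\Gamma(I_1);a,0)=a$, so the hypothesis is exactly the condition $a\ne 0$ that we are already assuming. There is no genuine obstacle — the exponent $0^{|E(\Gamma_2)|}=0^1$ is unambiguous and simply kills the $x$-dependence on the right — so the only real work is confirming that the attachment of two binary greedoids is itself a binary greedoid (equivalently, that it is admissible as an instance of $\pi_3[\mathcal B,\cdot,\cdot]$) and that it can be built in polynomial time, which is routine. For completeness one should also note that $(0,0)$ lies on $H_1$, so $\pi_3[\mathcal B,0,0]$ is solvable in polynomial time, accounting for the single exceptional value $a=0$.
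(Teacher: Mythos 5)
Your computation with Theorem~\ref{attachment function greedoids} is correct, and the overall reduction strategy (reduce $\pi_3[\mathcal B,1,0]$ to $\pi_3[\mathcal B,a,0]$ via a gadget whose Tutte polynomial at $(a,0)$ is a nonzero multiple of $T(\Gamma(M);1,0)$) is exactly right. But there is a genuine gap at the step you dismiss as routine: the claim that $\Gamma(M)\sim_f\Gamma(I_1)$ is a binary greedoid. This is false in general, and it is precisely why the paper does not use the $\sim_f$ attachment in the binary section. Already for $M=I_2$ with the attachment function $f(\emptyset)=\emptyset$, $f(\{1\})=\{1\}$, $f(\{1,2\})=\{1,2\}$, the attachment $\Gamma(I_2)\sim_f\Gamma(I_1)$ has feasible sets $\emptyset,\{1\},\{1,2\},\{1,g_1\},\{1,2,g_1\},\{1,2,g_2\},\{1,2,g_1,g_2\}$; a short computation shows that for any $4\times 4$ binary matrix realising the feasible singletons and pairs, the set $\{1,2,g_2\}$ is feasible if and only if $\{1,g_1,g_2\}$ is, so no binary matrix realises this greedoid. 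Since the oracle for $\pi_3[\mathcal B,a,0]$ only accepts binary greedoids, your gadget is not a legal input, and the reduction does not go through as written. (Note also that the block-diagonal matrix you allude to realises a different construction: it attaches a \emph{single} copy of $\Gamma(I_1)$ unconditionally, not $\rho(\Gamma(M))$ copies governed by $f$.)

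The paper's proof repairs exactly this point by using the full rank attachment instead: $M\approx I_1$ is by construction the binary greedoid of the block-diagonal matrix $\left(\begin{smallmatrix}M&0\\0&1\end{smallmatrix}\right)$, hence a valid instance, and Theorem~\ref{thm:fullrankattach} gives $T(M\approx I_1;a,0)=T(M;a,0)(a-1)\cdot 0+T(M;1,0)\bigl(a-(a-1)\cdot 0\bigr)=a\,T(M;1,0)$, after which the argument concludes as you intended via Proposition~\ref{binary greedoid hard along $x=1$}. So the fix is a one-line substitution of construction, but the construction you chose does not work.
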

\begin{proof}
Let $M$ be a binary matrix with linearly independent rows. Then from Theorem~\ref{thm:fullrankattach}, we have $T(M\approx I_1;a,0)=aT(M;1,0)$.
Therefore when $a \neq 0$ we have $\pi_2[\mathcal{B},1,0] \propto_T \pi_2[\mathcal{B},a,0]$.
The result now follows from Proposition~\ref{binary greedoid hard along $x=1$}.
\end{proof}

\begin{proposition}
The problem $\pi_3[\mathcal{B},a,-1]$ is $\#P$-hard for all $a\neq \frac 12$.
\end{proposition}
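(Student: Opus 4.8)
The plan is to reduce the $\#$P-hard problem $\pi_3[\mathcal{B},1,-1]$ to $\pi_3[\mathcal{B},a,-1]$ by means of a full rank attachment, mirroring the treatment of the point $(a,0)$ in Proposition~\ref{binary greedoid hard along $y=0$}. First I would record why $\tfrac12$ is excluded: $(\tfrac12-1)(-1-1)=1$, so $(\tfrac12,-1)$ lies on $H_1$ and is evaluable in polynomial time by the remarks at the end of Section~\ref{The Tutte Polynomial of a Rooted Graph and of a Rooted Digraph}. I would also recall that $\pi_3[\mathcal{B},1,-1]$ is $\#$P-hard by Proposition~\ref{binary greedoid hard along $x=1$}. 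Thus it suffices to establish $\pi_3[\mathcal{B},1,-1]\propto_T\pi_3[\mathcal{B},a,-1]$ for every $a\ne\tfrac12$.

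For the reduction, let $\Gamma$ be a binary greedoid. As observed in Section~\ref{Rooted Graphs, Rooted Digraphs and Greedoids}, we may assume $\Gamma=\Gamma(M)$ for a binary matrix $M$ with linearly independent rows. Then $\Gamma\approx\Gamma(I_1)$ is the binary greedoid of the block matrix $\left(\begin{array}{c|c} M & 0 \\ \hline 0 & 1 \end{array}\right)$, which can be constructed from $M$ in polynomial time. Specialising the identity for $T(\Gamma\approx\Gamma(I_k);x,y)$ established above (equivalently, applying Theorem~\ref{thm:fullrankattach} with $\Gamma_2=\Gamma(I_1)$) to $k=1$, $y=-1$ and $x=a$, the factors $(x-1)^ky^k$ and $1+\sum_{j=1}^k(x-1)^jy^{j-1}-(x-1)^ky^k$ become $1-a$ and $2a-1$ respectively, so that
\[T(\Gamma\approx\Gamma(I_1);a,-1)=(1-a)\,T(\Gamma;a,-1)+(2a-1)\,T(\Gamma;1,-1).\]
Since $a\ne\tfrac12$ we have $2a-1\ne0$, so calling an oracle for $\pi_3[\mathcal{B},a,-1]$ once on $\Gamma$ and once on $\Gamma\approx\Gamma(I_1)$ lets us solve this equation for $T(\Gamma;1,-1)$ in polynomial time. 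Hence $\pi_3[\mathcal{B},1,-1]\propto_T\pi_3[\mathcal{B},a,-1]$, and $\#$P-hardness of the latter follows from Proposition~\ref{binary greedoid hard along $x=1$}.

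There is essentially no real obstacle here. The only points deserving a moment's attention are passing to a full row rank representation of $\Gamma$ (so that $\Gamma\approx\Gamma(I_1)$ genuinely is the greedoid of the displayed block matrix) and observing that, because the full rank attachment identity of Theorem~\ref{thm:fullrankattach} is polynomial in $x$ and $y$ — in contrast with the attachment-function identity of Theorem~\ref{attachment function greedoids}, whose denominator forced the separate treatment of $a=0$ in Proposition~\ref{prop: b=-1} — no point of the line $y=-1$ other than $(\tfrac12,-1)$ needs special handling. If one wanted a presentation uniform with the earlier sections, one could instead use $\Gamma\approx\Gamma(I_k)$ for $k=0,1,\dots,\rho(\Gamma)$ together with a Vandermonde argument to recover the whole polynomial $T(\Gamma;x,-1)$ from evaluations at $(a,-1)$; but since the single hard point $(1,-1)$ is already available on this line, the shorter route above suffices.
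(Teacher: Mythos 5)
Your proof is correct and follows essentially the same route as the paper: the paper also applies the full rank attachment identity with $\Gamma(I_1)$ at $(a,-1)$ to obtain $(2a-1)T(M;1,-1) = T(M\approx I_1;a,-1) + (a-1)T(M;a,-1)$, solves for $T(M;1,-1)$ using $2a-1\ne 0$, and invokes the hardness of $\pi_3[\mathcal{B},1,-1]$. Your write-up is in fact somewhat more explicit than the paper's about the block-matrix construction and why no other point on $y=-1$ needs special handling.
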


\begin{proof}
Let $M$ be a binary matrix with linearly independent rows.
We have
\[ (2a-1)T(M;1,-1) = T(M\approx I_1;a,-1) + (a-1)T(M;a,-1).\]
Thus, $\pi_3[\mathcal B,1,-1]\propto_T \pi_3[\mathcal B,a,-1]$. By using Proposition~\ref{binary greedoid hard along $x=1$}, we deduce that $\pi_{0}[\mathcal B,a,-1]$ is $\#$P-hard.
\end{proof}

Our final result completes the proof of Theorem~\ref{maintheorembinarygreedoid}.

\begin{theorem}
Let $(a,b)$ be a point in $H_{\alpha}$ for $\alpha \in \mathbb{Q}-\{0,1\}$ with $b\neq -1$. Then $\pi_3[\mathcal{B},a,b]$ is $\#$P-hard.
\end{theorem}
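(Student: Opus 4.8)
The plan is to mimic the concluding step of the proofs of Theorems~\ref{maintheoremrootedgraph} and~\ref{maintheoremdigraph}: first pin down a single $\#$P-hard point on each hyperbola $H_\alpha$ with $\alpha\in\mathbb{Q}-\{0,1\}$, then spread hardness to every point of $H_\alpha$ with $b\notin\{-1,0\}$ via the thickening reduction of Proposition~\ref{binary greedoid reduction 1}, and finally note that the leftover value $b=0$ is itself the special point used in the first step.

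First I would locate where $H_\alpha$ meets the line $y=0$: setting $y=0$ in $(x-1)(y-1)=\alpha$ gives $x=1-\alpha$, so $(1-\alpha,0)\in H_\alpha$. Since $\alpha\ne 1$ we have $1-\alpha\ne 0$, so Proposition~\ref{binary greedoid hard along $y=0$} shows that $\pi_3[\mathcal{B},1-\alpha,0]$ is $\#$P-hard. This already settles the case $b=0$ of the theorem, because the only point of $H_\alpha$ with second coordinate $0$ is $(1-\alpha,0)$.

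For the remaining case, $(a,b)\in H_\alpha$ with $b\notin\{-1,0\}$ (here $b\ne 1$ holds automatically, as $\alpha\ne 0$), I would chain two reductions. Because evaluating at a point trivially reduces to evaluating along any rational curve through it, $\pi_3[\mathcal{B},1-\alpha,0]\propto_T\pi_2[\mathcal{B},H_\alpha]$, so $\pi_2[\mathcal{B},H_\alpha]$ is $\#$P-hard. Then Proposition~\ref{binary greedoid reduction 1}, whose hypotheses $(a,b)\ne(1,1)$ and $b\notin\{-1,0\}$ are met here, gives $\pi_2[\mathcal{B},H_\alpha]\propto_T\pi_3[\mathcal{B},a,b]$, and transitivity of $\propto_T$ finishes the argument.

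I do not expect a genuine obstacle: all the heavy lifting is already done --- the thickening identity (Theorem~\ref{greedoid thickening}) and the Vandermonde interpolation it feeds into (Proposition~\ref{binary greedoid reduction 1}), together with hardness along $y=0$ coming, through Lemma~\ref{binary greedoid to matroid}, from the graphic case of Theorem~\ref{VW}. The closest thing to an obstacle is careful bookkeeping on the excluded parameters: $\alpha\ne 1$ is exactly what guarantees $(1-\alpha,0)$ is a legitimate hard point, $\alpha\ne 0$ is what guarantees $b\ne 1$ on $H_\alpha$, and the hypothesis $b\ne -1$ is precisely what remains after peeling off $b=0$ so that Proposition~\ref{binary greedoid reduction 1} may be applied.
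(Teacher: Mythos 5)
Your argument is essentially identical to the paper's: the paper also uses the point $(1-\alpha,0)$ where $H_\alpha$ meets the $x$-axis, invokes Proposition~\ref{binary greedoid hard along $y=0$} for hardness there, and transfers this to general $(a,b)$ on $H_\alpha$ via Proposition~\ref{binary greedoid reduction 1}. Your explicit separation of the case $b=0$ (where the reduction is trivial because $(a,b)$ is itself the hard point) is a small clarity improvement over the paper's phrasing, but the proof is the same.
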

\begin{proof}
For $\alpha \in \mathbb{Q}-\{0,1\}$, the hyperbola $H_{\alpha}$  crosses the $x$-axis at the point $(1-\alpha,0)$. By Proposition~\ref{binary greedoid reduction 1} since $b\neq -1$ and $(a,b) \neq (1,1)$ we have
$\pi_3[\mathcal{B},1-\alpha,0] \propto_T \pi_3[\mathcal{B},a,b]$.
The result now follows from Proposition~\ref{binary greedoid hard along $y=0$}.
\end{proof}

\appendix
\section{Counting bases in a represented matroid}\label{sec:appendix}
In this appendix, we present a proof that counting the number of bases of a represented matroid is $\#$P-complete. More precisely,
we consider the following family of counting problems.
Let $\mathbb F$ be a field.

\prob{\textsc{Counting Bases of $\mathbb F$-Represented Matroids}}{A $(0,1)$-matrix $A$.}
{The number of bases of $M(A)$, the matroid represented by $A$ over the field $\mathbb F$.}

\begin{theorem}\label{thm:main}
For every field $\mathbb F$, \textsc{Counting Bases of $\mathbb F$-Represented Matroids} is $\#$P-complete.
\end{theorem}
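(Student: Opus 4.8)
The proof splits into showing the problem lies in $\#$P and showing it is $\#$P-hard. For membership, observe first that because $A$ is a $(0,1)$-matrix, a set of its columns is linearly dependent over $\mathbb{F}$ if and only if it is linearly dependent over the prime subfield $\mathbb{F}_0$ of $\mathbb{F}$: a linear dependence is a solution of a linear system whose coefficients lie in $\{0,1\}$, and the solvability and rank of such a system are unchanged on passing to a field extension. Hence $M(A)$ depends only on $\mathrm{char}(\mathbb{F})$, and we may assume $\mathbb{F}$ is $\mathbb{Q}$ or $\mathbb{F}_p$ for a prime $p$. Letting $r=\mathrm{rank}(A)$, a nondeterministic machine that guesses an $r$-subset $S$ of the columns and verifies, by Gaussian elimination, that the corresponding submatrix is non-singular runs in polynomial time --- using Lemma~\ref{lem:gauss} to bound bit-lengths when $\mathbb{F}_0=\mathbb{Q}$, and arithmetic modulo $p$ otherwise --- and its accepting computations are precisely the bases of $M(A)$. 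So the problem is in $\#$P.

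For hardness, by the reduction to the prime field it suffices to argue in each fixed characteristic. The plan is to reduce from the problem of counting the perfect matchings of a bipartite graph, equivalently from computing the permanent of a $(0,1)$-matrix, which is $\#$P-complete by a theorem of Valiant. Given a bipartite graph $H$ with vertex classes of size $n$ and edge set $F$ of size $m$, I would attach to $H$ a \emph{matching gadget}: a $(0,1)$-matrix whose columns contain a distinguished block of $n$ ``slot'' columns, designed so that in any basis of the associated matroid the chosen slot columns correspond exactly to a perfect matching of $H$, while the remaining columns are chosen freely subject to an independence condition local to each edge. Padding this gadget with $k$ parallel copies of suitable dummy columns (an instance of the thickening and full-rank-attachment constructions of Section~\ref{sec:constructions}) yields, for each $k\ge 1$, a $(0,1)$-matrix $A_k(H)$ of size polynomial in $m+k$, constructible in polynomial time.

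The basis count $b(M(A_k(H)))$ is then a polynomial in $k$ of degree at most some explicit $d=\mathrm{poly}(m,n)$: the bases extending a fixed perfect matching contribute a fixed monomial whose coefficient is a known nonzero integer times the number of perfect matchings of $H$, and the remaining, lower-order terms come from near-perfect matchings. Computing $b(M(A_k(H)))$ for $k=1,\dots,d+1$ and solving the resulting non-singular Vandermonde system by Lemma~\ref{lem:gauss} therefore recovers the number of perfect matchings of $H$ in polynomial time, completing the reduction; specializing to $p=2$ yields Theorem~\ref{thm:Vertigan} for binary matroids as a special case.

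The main obstacle is the construction and analysis of the matching gadget: it must be arranged so that the correspondence between bases and (perfect matching)--(dummy choice) pairs holds over \emph{every} prime field simultaneously. Over a sufficiently large field one could place the gadget's nonzero entries in general position to preclude accidental linear dependences, but here the entries must be $0$ or $1$, so a determinant vanishing unexpectedly in small characteristic --- for instance over $\mathbb{F}_2$, where regular matroids already admit a polynomial-time basis count via the matrix--tree theorem, so any hardness must genuinely be forced by a Fano-type minor --- has to be ruled out combinatorially rather than by genericity. Establishing this robustness, and verifying that $b(M(A_k(H)))$ has the claimed polynomial shape, is the technical heart of the argument.
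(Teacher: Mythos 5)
Your overall strategy --- reduce from counting perfect matchings, build a $(0,1)$-gadget parametrised by an integer $k$, show the basis count of the resulting matroid is determined by a short list of unknowns with the perfect-matching count among them, and recover that count by interpolating over $k$ via Lemma~\ref{lem:gauss} --- is exactly the shape of the paper's argument. But the proposal has a genuine gap: the entire technical content is deferred. You write that ``the construction and analysis of the matching gadget \dots is the technical heart of the argument'' and then stop, whereas that construction and analysis \emph{is} the proof. The paper's Appendix~\ref{sec:appendix} supplies it explicitly: each edge $e_i=v_av_b$ of $G$ is replaced by $k$ blocks $\{w_{i,j},x_{i,j},y_{i,j},z_{i,j}\}$ wired to the vertex columns so that the restriction to one block together with $\{v_a,v_b,e_i\}$ is the Fano matroid $F_7$ in characteristic two and the non-Fano $F_7^-$ otherwise; this is precisely what defeats the obstruction you correctly identify (that over $\mathbb F_2$ a regular gadget cannot work because of the matrix--tree theorem), and it forces the two characteristics to be analysed separately (Propositions~\ref{prop:keychartwo} and its odd-characteristic analogue, which needs the extra parity Lemma~\ref{lem:keycharnot2} about circuits of $wz$-labelled edges). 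Asserting that such a gadget ``would'' exist does not establish it.

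A second, more specific problem is that the correspondence you posit --- ``in any basis of the associated matroid the chosen slot columns correspond exactly to a perfect matching of $H$'' --- is stronger than what the paper manages to engineer, and there is no evidence a $(0,1)$-gadget achieving it exists. In the paper, bases of $M_k$ correspond to the much larger class of \emph{feasible templates}, in which only the bidirected edges form a matching (generally not perfect), and the perfect matchings are isolated only as the coefficient $t_{n/2}$ in the expansion $b_k=\sum_{j}4^{km}(k/4)^n(4/k+12)^j t_j$ (Proposition~\ref{prop:counting}); note also that $b_k$ is not a polynomial in $k$ as you claim, but becomes one in the transformed variable $4/k+12$ after dividing by $4^{km}(k/4)^n$, which is why $\lfloor n/2\rfloor+1$ values of $k$ suffice. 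Proving that every basis decomposes edge-by-edge into one of these template types, and that no ``accidental'' dependences arise across gadget blocks, occupies Propositions~\ref{prop:keychartwo} through~\ref{prop:counting2}; none of that is present or replaceable by a genericity argument, as you yourself observe. The membership-in-$\#$P paragraph and the reduction to prime subfields are fine, but as it stands the hardness half is a plan rather than a proof.
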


A proof of this result was announced nearly 30 years ago by Dirk Vertigan --- it first seems to have been referred to in~\cite{zbMATH00750655} and slightly later in~\cite{zbMATH01464264}, where it is described as an unpublished manuscript --- but no written proof has been circulated. Sketches of the proof have been presented by Vertigan in talks, for example, at the Conference for James Oxley in 2019~\cite{Vertiganunpub}. The second author was present at this meeting and the material in this section has been produced from his incomplete recollection of the talk. All the key ideas are due to Vertigan but the details including any errors, omissions or unnecessary complications are due to the authors. As pointed out to us by Dillon Mayhew~\cite{Dillon}, Vertigan's proof presented in~\cite{Vertiganunpub} introduced an intermediate step involving weighted bases; our proof does not require this intermediate step but this comes at the cost of introducing a larger matrix in the reduction.
We provide the proof, partly as a service to the community because we know of several colleagues who have tried to recreate it, but primarily because a referee has pointed out the undesirability of relying on an unpublished result.
Although our original aim was only to establish the special case of Theorem~\ref{thm:main} relevant for our work, it turns out that little extra effort is required to prove Theorem~\ref{thm:main} in full generality.

We require very little matroid theory other than basic notions such as rank, circuits and the closure operator. As we work exclusively with matroids having representations drawn from a specific family of matrices considered over different fields, the claims we make about the associated matroids can easily be checked by considering the representing matrices. For background on matroids see~\cite{Oxley}.

A graph is \emph{simple} if it has no loops or parallel edges.
To prove hardness, we give a reduction from counting perfect matchings in a simple graph, a problem which is well-known to be $\#$P-complete~\cite{zbMATH03646278}. Clearly, it makes no difference to the complexity of counting perfect matchings if we forbid our graphs from having isolated vertices.
Given such a graph $G$ with $n$ vertices, we construct a family of matrices $\{A_i: 1 \leq i \leq \lfloor n/2\rfloor +1\}$ with entries in $\{0,1\}$.
By considering these matrices as being defined over different fields, we obtain two corresponding families of matroids. Which family arises depends on whether the field has characteristic two. Thus the proof of Theorem~\ref{thm:main} splits into two parts depending on whether the characteristic of the underlying field is two.

We shall generally think of matrices as coming with sets indexing their rows and columns. If $A$ is a matrix with sets $X$ and $Y$ indexing its rows and columns respectively, then we say that $A$ is an $X\times Y$ matrix.
For non-empty subsets $X'$ and $Y'$ of $X$ and $Y$, respectively, $A[X',Y']$ is the submatrix of $A$ obtained by deleting the rows indexed by elements of $X-X'$ and the columns indexed by elements of $Y-Y'$.

Suppose that $G$ is a simple graph without isolated vertices having vertex set $\{v_1,\ldots,v_n\}$ and edge set $\{e_1,\ldots,e_m\}$. Let $k$ be a strictly positive integer.
Let
\begin{align*} X&=\{v_1,\ldots,v_n, e_1,\ldots, e_m\} \cup \{ f_{i,j}: 1\leq i\leq m, 1\leq j \leq k\} \\
   \intertext{and} Y&=\{v_1,\ldots,v_n, e_1,\ldots, e_m\} \cup \{ w_{i,j}, x_{i,j}, y_{i,j}, z_{i,j} : 1\leq i\leq m, 1\leq j \leq k\}.\end{align*}
Here both $X$ and $Y$ include all the vertices and edges of $G$, together with several new elements.
The matrix $A_k$ is an $X\times Y$ matrix.
To specify its entries
suppose that $e_i$ has endvertices $v_a$ and $v_b$ with $a<b$.
Then for each $j$ with $1\leq j\leq k$, taking $X'=\{v_a,v_b,f_{i,j}\}$ and $Y'=\{v_a,v_b,e_i,w_{i,j},x_{i,j},y_{i,j},z_{i,j}\}$, we let
\[ {A_k}[X',Y'] =   \begin{bNiceMatrix}[first-row,first-col]
        & v_a & v_b & e_i & w_{i,j} & x_{i,j} & y_{i,j} & z_{i,j} \\
v_a     & 1   & 0   & 1   & 0       & 1       & 0       & 1 \\
v_b     & 0   & 1   & 1   & 0       & 0       & 1       & 1  \\
f_{i,j} & 0   & 0   & 0   & 1       & 1       & 1       & 1
\end{bNiceMatrix}.\]
We complete the definition of $A_k$ by setting every as yet unspecified entry to zero.

Fix $\mathbb F$ and let $N_k=M(A_k)$, that is, the matroid with element set $Y$ represented by $A_k$ considered over $\mathbb F$.
Taking $Y'$ as in the previous paragraph, if $\mathbb F$ has characteristic two, then $N_k|Y'$ is isomorphic to the Fano matroid $F_7$ and otherwise $N_k|Y'$ is isomorphic to the non-Fano matroid $F_7^-$ obtained from $F_7$ by relaxing the circuit-hyperplane $\{e_i,x_{i,j},y_{i,j}\}$.
Now let $M_k=N_k\setminus (V\cup E)$. Note that $r(M_k)=r(N_k)=|V|+|E|k$ and that for each vertex $v$ and edge $e$ of $G$, $N_k$ contains elements $e$ and $v$, but $M_k$ contains neither.

We shall show that for each $k$, every basis of $M_k$ corresponds to what we call a \emph{feasible template} of $G$, that is, a subgraph of $G$ in which some edges are directed (possibly in both directions) and some are labelled, satisfying certain properties which we describe below.
In particular, we will see that the bidirected edges in a feasible template form a matching in $G$. Furthermore, the number of bases of $M_k$ corresponding to each feasible template depends only on $k$ and the numbers of edges directed and labelled in each possible way, and is easily computed. By varying $k$ and counting the number of bases of $M_k$, we can recover the number of feasible templates with each possible number of bidirected edges. The number of feasible templates with $n/2$ bidirected edges is equal to the number of perfect matchings of $G$.

Let $G$ be a simple graph without isolated vertices, having vertex set $V=\{v_1,\ldots,v_n\}$ and edge set $E=\{e_1,\ldots, e_m\}$.
A \emph{template} of $G$ is a spanning subgraph of $G$ in which edges may be bidirected, that is, two arrows are affixed one pointing to each endvertex, (uni)directed or undirected, and are labelled according to the following rules.
\begin{itemize}
\item Every bidirected edge is unlabelled.
\item A (uni)directed edge $e=v_av_b$ with $a<b$ is labelled either $wx$ or $yz$ if $e$ is directed towards $a$ and is labelled either $wy$ or $xz$ if $e$ is directed towards $b$.
\item An undirected edge is labelled either $wz$ or $xy$.
\end{itemize}

Even though the matroid $M_k$ itself depends on whether $\mathbb F$ has characteristic two, the proofs of the two cases have a great deal in common. To prevent repetition we describe the common material here, before finishing the two cases separately.
For $1\leq i \leq m$ and $1 \leq j \leq k$, let $F_{i,j}= \{w_{i,j},x_{i,j},y_{i,j},z_{i,j}\}$ and for $1 \leq i \leq m$, let $F_i =\bigcup_{1 \leq j \leq k} F_{i,j}$. For all $i$ and $j$, the set $F_{i,j}$ is a circuit and $r(M_k\setminus F_{i,j})<r(M_k)$.
Let $B$ be a basis of $M_k$. Then
$1 \leq |B\cap F_{i,j}| \leq 3$. Moreover, for all $i$, $r(F_i)=k+2$ and $r(M_k\setminus F_i) \leq r(M_k)-k$, so $k \leq |B\cap F_i| \leq k+2$.
The main idea in the proof is to use templates to classify each basis $B$ of $M_k$ by specifying $|B\cap F_i|$ for each $i$ and when $|B\cap F_i|=k+1$, implying that $|B\cap F_{i,j}|=2$ for precisely one value $j^*$ of $j$, additionally specifying $B\cap F_{i,j^*}$.

Suppose edge $e_i$ joins vertices $v_a$ and $v_b$ in $G$ and $a<b$. If $|B\cap F_i| = k$, then $\cl_{N_k}(B\cap F_i)-E(M_k)=\emptyset$, and if
$|B\cap F_i| = k+2$, then $\cl_{N_k}(B\cap F_i)-E(M_k)=\{v_a,v_b,e_i\}$.
If $|B\cap F_i| = k+1$, then $|B \cap F_{i,j}| =2$ for precisely one value $j^*$ of $j$ and $\cl_{N_k}(B\cap F_i)-E(M_k)$ depends on $B\cap F_{i,j^*}$.
\begin{itemize}
\item If $B\cap F_{i,j^*}$ is $\{w,x\}$ or $\{y,z\}$, then $\cl_{N_k}(B\cap F_i)-E(M_k)=\{v_a\}$.
\item If $B\cap F_{i,j^*}$ is $\{w,y\}$ or $\{x,z\}$, then $\cl_{N_k}(B\cap F_i)-E(M_k)=\{v_b\}$.
\item If $B\cap F_{i,j^*}$ is $\{w,z\}$, then $\cl_{N_k}(B\cap F_i)-E(M_k)= \{e_i\}$.
\item If $B\cap F_{i,j^*}$ is $\{x,y\}$, then $\cl_{N_k}(B\cap F_i)-E(M_k)$ is $\{e_i\}$ when $\mathbb F$ has characteristic two and is empty otherwise.
\end{itemize}

To each subset $S$ of $E(M_k)$, such that for all $i$, $S\cap F_i$ is independent and for all $i$ and $j$, $|S\cap F_{i,j}|\geq 1$,
we associate a template $T(S)$ of $G$, by starting with an edgeless graph with vertex set $V(G)$ and doing the following for each edge $e_i$ of $G$ such that $|S\cap F_{i,j}|>1$ for some $j$. Suppose that $e_i=v_av_b$ with $a<b$. We first consider whether to add $e_i$ to $T(S)$ and whether to direct it.
\begin{itemize}
\item If $\cl_{N_k}(S\cap F_i)-E(M_k)=\{v_a,v_b,e_i\}$, then add $e_i$ to $T(S)$ and bidirect it.
\item If $\cl_{N_k}(S\cap F_i)-E(M_k)=\{v_a\}$, then add $e_i$ to $T(S)$ and direct it from $v_b$ to $v_a$.
\item If $\cl_{N_k}(S\cap F_i)-E(M_k)=\{v_b\}$, then add $e_i$ to $T(S)$ and direct it from $v_a$ to $v_b$.
\item If $\cl_{N_k}(S\cap F_i)-E(M_k)\subseteq \{e_i\}$, then add $e_i$ to $T(S)$ (and do not direct it).
\end{itemize}
In the last three cases above, we also label $e_i$. To do this let $j^*$ be the unique value of $j$ such that $|S\cap F_{i,j}|=2$.
Then label $e_i$ with the elements of $S\cap F_{i,j^*}$, but with their subscripts omitted. In this way the edge $e_i$ is given two labels from the set $\{w,x,y,z\}$.

\subsection{$\mathbb F$ has characteristic two}
We now focus on the case when $\mathbb F$ has characteristic two.
The following result is the key step in the proof.

\begin{proposition}\label{prop:keychartwo}
A subset $B$ of $E(M_k)$ is a basis of $M_k$ if and only if all of the following conditions hold.
\begin{enumerate}
\item For all $i$, $B\cap F_{i}$ is independent.
\item For all $i$ and $j$, $|B \cap F_{i,j}| \geq 1$.
\item The subgraph of $T(B)$ induced by its undirected edges is acyclic.
\item It is possible to direct the undirected edges of $T(B)$ so that every vertex has indegree one.
\end{enumerate}
\end{proposition}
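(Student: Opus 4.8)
The plan is to turn the statement into a question of linear independence over $\mathbb F$ and then into the stated combinatorial conditions. Since $M_k$ is a restriction of $N_k$ and $r(M_k)=r(N_k)=n+km$, a set $B\subseteq E(M_k)$ is a basis of $M_k$ if and only if $B$ is independent in $N_k$ and $|B|=n+km$. The forward directions of the first two conditions are immediate: if $B$ is a basis then $B\cap F_i$ is independent for every $i$, and condition~2 is precisely the inequality $|B\cap F_{i,j}|\ge 1$ already recorded before the proposition; in particular $T(B)$ is defined. It therefore suffices, assuming conditions~1 and~2, to show that $B$ is a basis of $M_k$ if and only if conditions~3 and~4 hold.

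Assume conditions~1 and~2 and write $c_i=|B\cap F_i|-k\in\{0,1,2\}$, so $|B|=km+\sum_i c_i$. For each $i$ the rows $f_{i,1},\dots,f_{i,k}$ are nonzero only within the columns of $F_i$, and each column of $F_{i,j}$ has a $1$ in row $f_{i,j}$; I would row-reduce $B$ using one chosen element of each $B\cap F_{i,j}$ (which exists by condition~2) as a pivot, which clears these rows and shows $r_{N_k}(B)=km+\dim W_B$, where $W_B\subseteq\langle v_1,\dots,v_n\rangle$ is the span of the $\sum_i c_i$ resulting difference vectors. The closure computations recorded before the proposition identify, for each $i$, which of $v_a,v_b,v_a+v_b$ lie in $\langle B\cap F_i\rangle$; this pins down the $c_i$-dimensional subspace $\langle B\cap F_i\rangle\cap\langle v_a,v_b\rangle$, which is the contribution of $F_i$ to $W_B$. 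Reading off the cases, this contribution is $\{0\}$ when $c_i=0$, the line $\langle v\rangle$ when $e_i$ is directed in $T(B)$ with head $v$, the line $\langle v_a+v_b\rangle$ when $e_i=v_av_b$ is undirected in $T(B)$, and all of $\langle v_a,v_b\rangle$ when $e_i=v_av_b$ is bidirected in $T(B)$. It is here that the hypothesis that $\mathbb F$ has characteristic two enters: over such a field the two undirected labels $wz$ and $xy$ both give the contribution $\langle v_a+v_b\rangle$. Hence $B$ is independent in $N_k$ if and only if the $\sum_i c_i$ difference vectors are linearly independent, and $B$ is a basis of $M_k$ if and only if they form a basis of $\langle v_1,\dots,v_n\rangle$.

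It remains to recast this as conditions~3 and~4. Let $H$ be the spanning subgraph of $T(B)$ consisting of its undirected edges, and let $U$ be the multiset containing the head of each directed edge of $T(B)$ and both endpoints of each bidirected edge; the difference vectors are then the vectors $v$ with $v\in U$ together with the vectors $v_a+v_b$ for $v_av_b\in E(H)$, and there are $|U|+|E(H)|=\sum_i c_i$ of them. If these form a basis, then $U$ must be a set (a repeated vertex gives equal vectors); $H$ must be acyclic, since over characteristic two the incidence vectors around a cycle sum to zero, so condition~3 holds; and no component of $H$ may contain two vertices of $U$, since over characteristic two the incidence vectors along a path joining $u_1,u_2\in U$ sum to $u_1+u_2$. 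A count of dimensions (using that $H$ is a forest and that a component of $H$ free of $U$ would leave a coordinate direction unspanned) then forces each component of $H$ to contain exactly one vertex of $U$ and forces $V=V(H)\cup U$, and orienting each tree component away from its unique vertex of $U$ gives every vertex in-degree one, which is condition~4. Conversely, if conditions~3 and~4 hold, then condition~4 forces $U$ to be a set, and since $H$ is a forest the orientation witnessing condition~4 must make each component of $H$ a tree rooted at its unique vertex of $U$ with edges directed away from the root, and must cover every vertex; row-reducing with the vectors $v$ for $v\in U$ and then with the incidence vectors of each tree component in root-to-leaf order exhibits the whole list as a basis of $\langle v_1,\dots,v_n\rangle$. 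I expect the last equivalence to be the delicate part; in particular one must note that condition~4 alone does not force $H$ to be acyclic --- a component of $H$ that is free of $U$ and unicyclic can also be oriented with all in-degrees equal to one --- which is precisely why condition~3 is imposed as well.
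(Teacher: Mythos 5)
Your proof is correct, but it takes a genuinely different route from the paper's. The paper argues matroid-theoretically inside $N_k$: sufficiency is proved by chaining closures along undirected paths to show every $v_i$ lies in $\cl_{N_k}(B)$ (so $r(B)=r(M_k)$, with the size count $km+2b+r+u=km+n$ doing the rest), and necessity of conditions~3 and~4 is proved by exhibiting, in each bad configuration (an undirected circuit, a vertex of indegree two, or two indegree-one vertices joined by an undirected path), two distinct circuits of $N_k$ through a common element and hence a circuit inside $B$. You instead row-reduce the representing matrix directly, using one pivot per $F_{i,j}$ to reduce the question to whether the collection of vectors $\{v: v\in U\}\cup\{v_a+v_b: v_av_b\in E(H)\}$ is a basis of $\langle v_1,\dots,v_n\rangle$, and then translate that into the template conditions. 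Your reduction is sound: the identification of each $F_i$'s contribution to $W_B$ with the head/sum-of-endpoints/full-plane cases matches the paper's closure computations, the counting $|U|+|E(H)|=\sum_i c_i$ is the same bookkeeping as the paper's $2b+r+u=n$, and the forest-plus-one-anchor-per-component characterisation is handled correctly, including the standard fact that an indegree-one orientation of a tree with a unique source must be the away-from-root orientation. What your approach buys is transparency in characteristic two --- in particular your closing observation that a unicyclic $U$-free component also admits an all-indegree-one orientation isolates exactly why condition~3 is not implied by condition~4, a point the paper leaves implicit --- and it would adapt to odd characteristic by replacing $v_a+v_b$ with signed incidence vectors, recovering the ``good circuit'' condition. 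What the paper's closure/circuit formulation buys is that it avoids committing to a specific matrix reduction and transfers with fewer changes to the non-characteristic-two case it treats next, at the cost of a heavier case analysis in the necessity direction.
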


\begin{proof}
We first show that the conditions are collectively sufficient. Suppose that $B$ satisfies each of the conditions and that $T(B)$ has $b$ bidirected edges, $r$ (uni)directed edges and $u$ undirected edges. Then the last condition implies that $2b+r+u=n$.
Combining this with the first two conditions gives $|B|= km + 2b + r + u = km+n=r(M_k)$. So, it is sufficient to prove that $r(B)=r(M_k)$. We will show that the last two conditions imply that $v_i \in \cl_{N_k}(B)$ for $i=1,\ldots,n$. Then the second condition ensures that $\cl_{N_k}(B)=E(N_k)$ and consequently $r(B)=r(N_k)=r(M_k)$ as required.

Consider a vertex $v$ of $G$. The last two conditions imply that there is a (possibly empty) path $P$ in $T(B)$ between $v$ and a vertex $v'$ having indegree one
and comprising only undirected edges. Suppose that the vertices of $P$ in order are $v_{j_1}=v', v_{j_2}, \ldots, v_{j_l}=v$ and that for $1\leq h \leq l-1$, the edge joining $v_{j_h}$ and $v_{j_{h+1}}$ in $P$ is $e_{i_h}$.
Then $\{v_{j_1},e_{i_1},\ldots,e_{i_{l-1}}\} \subseteq \cl_{N_k}(B)$. As $v_{j_h} \in \cl_{N_k} (\{v_{j_{h-1}},e_{i_{h-1}}\})$ for $h=2,\ldots,l$, we see that $v=v_{j_l}\in \cl_{N_k}(B)$, as required. Thus the conditions are sufficient.

To show that each condition is necessary we suppose that $B$ is a basis of $M_k$.
Clearly the first condition is necessary. We observed earlier that for all $i$ and $j$, $r(E(M_k)-F_{i,j})<r(M_k)$, so the second condition is also necessary. Suppose, without loss of generality, that edges $e_1,\ldots,e_l$ are undirected and form a circuit in $T(B)$. Then the corresponding elements $e_1, \ldots, e_l$ form a circuit in $N_k$. Because each of $e_1,\ldots,e_l$ is undirected, $e_i \in \cl_{N_k}(B\cap F_{i})$ for $i=1,\ldots,l$. Thus
\[ e_{l} \in \cl_{N_k} (\{e_{1},\ldots,e_{l-1}\}) \subseteq \cl_{N_k}\Bigg( \bigcup_{i=1}^{l-1} (B\cap F_{i})\Bigg).\]
So there is a circuit of $N_k$ contained in $\{e_l\} \cup (B\cap F_{l})$ and another contained in
$\{e_{l}\} \cup \bigcup_{i=1}^{l-1} (B\cap F_{i})$. Hence there is a circuit of $N_k$ and consequently of $M_k$ contained in
$\bigcup_{i=1}^{l} (B\cap F_{i})$, contradicting the fact that $B$ is a basis. Thus the third condition is necessary.

Finally, suppose that $T(B)$ has $b$ bidirected edges, $r$ (uni)directed edges and $u$ undirected edges. Then, as $km+2b+r+u=|B|=r(M_k)=km+n$, we have $2b+r+u=n$. Observe that if the undirected edges are assigned a direction, then the sum of the indegrees of the vertices will become $n$. Suppose that it is impossible to direct the undirected edges of $T(B)$ so that each vertex has indegree one. Then, before directing the undirected edges, there must either be a vertex $z$ with indegree at least two, or two vertices $x$ and $y$ both having indegree at least one and joined by a path $P$ of undirected edges.

In either case the aim is to establish a contradiction by showing that there is some vertex $v$ such that $B\cup \{v\}$ contains two distinct circuits in $N_k$. Then $B$ contains a circuit of $N_k$ and consequently of $M_k$.
In the former case there are distinct edges $e_i$ and $e_j$ directed towards (and possibly away from as well) $z$ in $T(B)$. So $z \in \cl_{N_k} (B \cap F_i) \cap \cl_{N_k} (B\cap F_j)$ implying that $(B\cap F_i) \cup \{z\}$ and $(B\cap F_j) \cup \{z\}$ both contain circuits of $N_k$ including $z$. But then $(B\cap F_i)\cup (B\cap F_j)= B\cap (F_i \cap F_j)$ contains a circuit of $N_k$ and consequently of $M_k$, contradicting the fact that $B$ is a basis of $M_k$. So we may assume that the latter case holds. Suppose that, without loss of generality, the vertices of $P$ in order are $v_1=x,v_2,\ldots,v_l=y$. Suppose, again without loss of generality, that for $i=2,\ldots,l$, the edge joining $v_{i-1}$ and $v_{i}$ in $P$ is $e_i$, that $e_1$ is directed towards $x=v_1$ in $T(B)$ and $e_{l+1}$ is directed towards $y=v_l$ in $T(B)$. Then $y \in \cl_{N_k} (B\cap F_{l+1})$ and $x \in \cl_{N_k} (B\cap F_{1})$. Furthermore, for each $i=2,\ldots,l$,
$e_i \in \cl_{N_k} (B\cap F_i)$, so $v_i \in  \cl_{N_k}\Big( \bigcup_{j=1}^{i} (B\cap F_{j}) \Big)$. In particular,
$y  \in \cl_{N_k}\Big( \bigcup_{j=1}^{l} (B\cap F_{j}) \Big)$.
So, there is a circuit of $N_k$ contained in $\{y\} \cup \{B\cap F_{l+1}\}$ and another contained in
$\{y\} \cup \bigcup_{j=1}^{l} (B\cap F_{j})$. Hence there is a circuit of $N_k$ and consequently of $M_k$ contained in
$\bigcup_{j=1}^{l+1} (B\cap F_{j})$, contradicting the fact that $B$ is a basis.
It follows that it possible to direct the undirected edges of each component of $T(B)$ so that every vertex has indegree one, establishing the necessity of the final condition.
\end{proof}

We say that a template $T$ is \emph{feasible} if it satisfies the last two conditions in the previous result, that is, if the subgraph induced by its undirected edges is acyclic and every vertex of the graph obtained from $T$ by contracting the undirected edges has indegree equal to one.

\begin{proposition}\label{prop:counting}
Let $G$ be a simple graph without isolated vertices and let $T$ be a feasible template of $G$ with $b$ bidirected edges. Then the number of bases of $M_k$ with template $T$ is
\[ 4^{km}\Big(\frac k4\Big)^n \Big(\frac 4k + 12\Big)^b.\]
\end{proposition}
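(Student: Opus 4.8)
The plan is to combine the basis characterisation of Proposition~\ref{prop:keychartwo} with the observation that that characterisation, and the rule defining $T(B)$, are ``local'' to the edges of $G$, so that counting bases with a fixed feasible template reduces to a product of independent local counts, one per edge of $G$. To make this precise, note that $E(M_k)=\bigsqcup_{i=1}^m F_i$, so a subset $B$ of $E(M_k)$ is the same thing as a tuple $(B\cap F_i)_{i=1}^m$; conditions~(1) and~(2) of Proposition~\ref{prop:keychartwo} depend only on the individual sets $B\cap F_i$, and (from the construction of $T(S)$) the way $e_i$ appears in $T(B)$ is determined by $\cl_{N_k}(B\cap F_i)-E(M_k)$ together with $B\cap F_{i,j^*}$ when $|B\cap F_i|=k+1$, so it too depends only on $B\cap F_i$. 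Since $T$ is feasible, conditions~(3) and~(4) hold automatically whenever $T(B)=T$. Hence the number of bases $B$ of $M_k$ with $T(B)=T$ equals $\prod_{i=1}^m c_i$, where $c_i$ counts the subsets $S\subseteq F_i$ that meet every $F_{i,j}$, are independent in $N_k$, and make $e_i$ appear in $T(S)$ exactly as it does in $T$.

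Next I would evaluate the easy local counts. If $e_i$ is absent from $T$ then $S$ meets each of the $k$ sets $F_{i,j}$ in exactly one element, and any such $S$ is independent because each chosen column has a private pivot in row $f_{i,j}$; so $c_i=4^k$. If $e_i$ is a (uni)directed or an undirected edge of $T$ bearing a prescribed two-element label, then $|S|=k+1$: for exactly one index $j^*$ the set $S\cap F_{i,j^*}$ is the two-element subset of $F_{i,j^*}$ named by the label, and $S$ meets every other $F_{i,j}$ in a single element. Using the private pivot rows $f_{i,j}$ with $j\ne j^*$ one checks that every such $S$ is independent, so $c_i=k\cdot 4^{k-1}$, and this value does not depend on which of the admissible labels is prescribed.

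The bidirected case is where the real work lies. Here $|S|=k+2$ and $S$ must be a basis of $F_i$ meeting every $F_{i,j}$; since each $|S\cap F_{i,j}|\le 3$ and these numbers sum to $k+2$, either some $|S\cap F_{i,j^*}|=3$ with the rest equal to $1$, or two of them equal $2$ with the rest equal to $1$. In the first case the three elements of $F_{i,j^*}$ already span the rows $v_a,v_b,f_{i,j^*}$, so $S$ is automatically a basis of $F_i$, contributing $k\binom43 4^{k-1}=k\cdot 4^k$. In the second case, fix the pair $\{j_1,j_2\}$ and the $4^{k-2}$ singleton choices elsewhere; the remaining task is to decide, among the $6\times 6$ ways of choosing a two-element subset of $F_{i,j_1}$ and one of $F_{i,j_2}$, for how many the resulting $S$ is independent. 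This is the only point at which the characteristic-two structure of $N_k$ is needed: reducing modulo the rows $f_{i,j_1}$ and $f_{i,j_2}$, each two-element subset of a four-element circuit $F_{i,j}$ spans, inside $\langle e_{v_a},e_{v_b}\rangle$, a well-defined one-dimensional subspace, one of $\langle e_{v_a}\rangle$, $\langle e_{v_b}\rangle$, $\langle e_{v_a}+e_{v_b}\rangle$; and $S$ is independent precisely when the two subspaces coming from the $j_1$-part and the $j_2$-part are distinct. Each of these three subspaces arises from exactly two of the six two-element subsets, so the number of good pairs is $6\cdot 6-3\cdot 2^2=24$, contributing $\binom k2\cdot 24\cdot 4^{k-2}=\tfrac34 k(k-1)4^k$. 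Adding the two cases, $c_i=4^k\bigl(k+\tfrac34 k(k-1)\bigr)=\tfrac14\,k(1+3k)\,4^k$ when $e_i$ is bidirected. I expect this count of ``good pairs'' --- the only place where one must read off vectors from $A_k$ and invoke $\operatorname{char}\mathbb F=2$ --- to be the main obstacle.

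Finally I would assemble the product. If $T$ has $b$ bidirected, $r$ (uni)directed and $u$ undirected edges, then it has $m-b-r-u$ absent edges, so
\[ \prod_{i=1}^m c_i=4^{km}\Bigl(\frac k4\Bigr)^{r+u}\Bigl(\frac{k(1+3k)}{4}\Bigr)^{b}. \]
Feasibility of $T$ gives $2b+r+u=n$ (exactly as in the proof of Proposition~\ref{prop:keychartwo}), so $r+u=n-2b$; substituting and using $\bigl(\tfrac4k\bigr)^2\cdot\tfrac{k(1+3k)}{4}=\tfrac4k+12$ turns the right-hand side into $4^{km}\bigl(\tfrac k4\bigr)^n\bigl(\tfrac4k+12\bigr)^b$, which is the claimed formula.
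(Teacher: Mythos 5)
Your proof is correct and takes essentially the same route as the paper's: the count factors into per-edge local contributions ($4^k$ for absent edges, $k\cdot 4^{k-1}$ for labelled directed/undirected edges, and $k\cdot 4^{k}+\binom{k}{2}\cdot 24\cdot 4^{k-2}$ for bidirected edges), assembled using $2b+r+u=n$. Your subspace argument giving $36-12=24$ admissible pairs in the bidirected case matches the paper's count of $6\cdot 4$, and the rest is the same algebra.
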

\begin{proof}
If follows from the definition of feasibility that if a feasible template contains $b$ bidirected edges, then it has $n-2b$ edges which are
either (uni)directed or undirected. Furthermore $G$ has $m-n+b$ edges which are not in $T$.
Suppose that $B$ is a basis with template $T$.
We count the number of choices for $B$. Suppose that $e_i$ is an edge of $G$ which is not present in $T$. Then for $j=1,\ldots,k$, we have $|F_{i,j} \cap B|=1$, so there are $4^k$ choices for $B\cap F_i$. Now suppose that $e_i$ is either (uni)directed or undirected in $T$. Then for all but one choice of $j$ in $1,\ldots,k$, we have $|F_{i,j}\cap B|=1$ and for the remaining possibility for $j$, $|F_{i,j}\cap B|=2$, with the choice of elements of $F_{i,j}$ specified by the labelling of the edge $e_i$.
Thus there are $k\cdot 4^{k-1}$ choices for $B\cap F_i$. Finally suppose that $e_i$ is a bidirected edge. Then there are two subcases to consider. Either $|F_{i,j}\cap B|=3$ for one value of $j$ and $|F_{i,j}\cap B|=1$ for all other values of $j$, or $|F_{i,j}\cap B|=2$ for two values of $j$ and $|F_{i,j}\cap B|=1$ for all other values of $j$. Suppose that $|F_{i,j'}\cap B|=|F_{i,j''}\cap B|=2$ for $j'\ne j''$. Then we also require that
$\cl_{N_k}(B\cap F_{i,j'})-F_i \ne  \cl_{N_k}(B\cap F_{i,j'})-F_i$. Thus there are $k \cdot 4^k + \binom k2 \cdot 6 \cdot 4 \cdot 4^{k-2}$ choices for $B\cap F_i$.

So the number of bases of $M_k$ with template $T$ is
\[ (4^k)^{m-n+b} (k\cdot 4^{k-1})^{n-2b} (k \cdot 4^k + \binom k2 \cdot 6 \cdot 4 \cdot 4^{k-2})^b = 4^{km}\Big(\frac k4\Big)^n \Big(\frac 4k + 12\Big)^b.\]
\end{proof}

\begin{theorem}\label{thm:chartwo}
If $\mathbb F$ is a field with characteristic two, then the problem \textsc{Counting Bases of $\mathbb F$-Represented Matroids} is $\#$P-complete.
\end{theorem}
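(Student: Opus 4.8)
The plan is to give a polynomial-time Turing reduction to \textsc{Counting Bases of $\mathbb F$-Represented Matroids} from the problem of counting perfect matchings of a simple graph, which is $\#$P-complete by Valiant~\cite{zbMATH03646278}. Membership in $\#$P is easy in this case: since every entry of the input matrix lies in $\{0,1\}$ and $\mathbb F$ has characteristic two, a square $(0,1)$-submatrix is nonsingular over $\mathbb F$ if and only if its determinant is odd, so independence of a column set can be tested in polynomial time by Gaussian elimination over $\mathbb F_2$; an NP machine that first computes the rank of the represented matroid, then guesses a column set of that size and accepts exactly when the corresponding submatrix is nonsingular, has precisely $|\mathcal B(M(A))|$ accepting computations.

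For hardness, let $G$ be a simple graph with no isolated vertices; since the number of perfect matchings of $G$ is $0$ unless $|V(G)|$ is even, we may assume $n=|V(G)|$ is even, and we write $m=|E(G)|$. For each $k$ with $1\le k\le n/2+1$ form the $(0,1)$-matrix $A_k$ as above and let $M_k$ be the matroid represented over $\mathbb F$ by the submatrix of $A_k$ with the columns indexed by $V\cup E$ deleted; this submatrix is a $(0,1)$-matrix that can be constructed from $G$ in time polynomial in $n+m$, uniformly over the stated range of $k$. First I would record the two structural facts I expect to be the crux of the argument: in any feasible template the bidirected edges form a matching of $G$ (a vertex meeting two bidirected edges would acquire indegree at least two in the graph obtained by contracting the undirected edges, contradicting feasibility), and consequently a feasible template has at most $n/2$ bidirected edges, with a feasible template attaining exactly $n/2$ bidirected edges being precisely a perfect matching of $G$ with all edges bidirected and no other edges present (any further directed or undirected edge would again destroy the indegree-one condition). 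Writing $N_b$ for the number of feasible templates of $G$ with exactly $b$ bidirected edges, this gives that $N_{n/2}$ equals the number of perfect matchings of $G$.

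Next, by Proposition~\ref{prop:keychartwo} every basis $B$ of $M_k$ has a well-defined feasible template $T(B)$, and by Proposition~\ref{prop:counting} the number of bases with a prescribed feasible template depends only on $k$, $m$, $n$ and the number $b$ of bidirected edges, so summing over feasible templates yields
\[ |\mathcal B(M_k)| \;=\; 4^{km}\Big(\frac k4\Big)^{n}\sum_{b=0}^{n/2} N_b\Big(\frac4k+12\Big)^{b}. \]
For $k=1,\dots,n/2+1$ I would query the oracle for $|\mathcal B(M_k)|$ and divide out the nonzero rational factor $4^{km}(k/4)^{n}$; the resulting $n/2+1$ equations form a Vandermonde system in the unknowns $N_0,\dots,N_{n/2}$ with nodes $4/k+12$, which are pairwise distinct for $k$ in this range because $4/k$ is strictly decreasing. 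Multiplying the $k$-th equation through by $k^{n/2}$ converts this into an integer linear system with nonsingular coefficient matrix, and a routine check shows that all entries of this matrix and of its right-hand side have bit-length polynomial in $n+m$ (using $|\mathcal B(M_k)|\le 2^{|E(M_k)|}\le 2^{4m(n/2+1)}$ and $(4+12k)^{b}k^{n/2-b}\le(6n+16)^{n/2}$). Applying Lemma~\ref{lem:gauss} then recovers all the $N_b$, in particular $N_{n/2}$, in polynomial time, which completes the reduction; combined with membership in $\#$P this proves the theorem.

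The only genuinely delicate points I foresee are the two structural observations folded into the second paragraph — that the bidirected edges of a feasible template form a matching, and that feasible templates with the maximum number of bidirected edges are exactly the perfect matchings — together with the bookkeeping needed to confirm that every integer occurring in the Vandermonde inversion stays of polynomial size; everything else is routine once Propositions~\ref{prop:keychartwo} and~\ref{prop:counting} are in hand.
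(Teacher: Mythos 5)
Your proposal follows essentially the same route as the paper's proof: a reduction from counting perfect matchings via the matroids $M_1,\ldots,M_{n/2+1}$, the identity $|\mathcal B(M_k)| = 4^{km}(k/4)^n\sum_b N_b(4/k+12)^b$ obtained from Propositions~\ref{prop:keychartwo} and~\ref{prop:counting}, and inversion of the resulting Vandermonde system to extract $N_{n/2}$. The additional details you supply — the $\#$P membership argument, the verification that bidirected edges form a matching so that feasible templates with $n/2$ bidirected edges are exactly the perfect matchings, and the bit-length bookkeeping for Lemma~\ref{lem:gauss} — are all correct and are points the paper leaves implicit.
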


\begin{proof}
It is clear that \textsc{Counting Bases of $\mathbb F$-Represented Matroids} belongs to $\#$P. To prove hardness, we give a reduction from counting perfect matchings. Let $G$ be a simple graph with $n$ vertices and $m$ edges. We may assume that $G$ has no isolated vertices and $n$ is even. We can construct representations of the matroids $M_1, \ldots, M_{n/2+1}$ in time polynomial in $n$ and $m$.
For $k=1,\ldots, n/2+1$, let
 $b_k$ denote the number of bases of $M_k$ and for $j=0,\ldots,n/2$, let $t_j$ denote the number of feasible templates of $G$ with $j$ bidirected edges. Then for $k=1,\ldots, n/2+1$, by Proposition~\ref{prop:counting}, we have
 \[ b_k = \sum_{j=0}^{n/2}  4^{km}\Big(\frac k4\Big)^n \Big(\frac 4k + 12\Big)^j t_j.\]
 Given $b_1,\ldots, b_{n/2+1}$, we may recover $t_0,\ldots,t_{n/2}$ in time polynomial in $n$ and $m$. In particular, we may recover $t_{n/2}$. But  feasible templates with $n/2$ bidirected edges are in one-to-one correspondence with perfect matchings of $G$.
 As counting perfect matching is $\#$P-complete by~\cite{zbMATH03646278}, we deduce that when $\mathbb F$ has characteristic two, \textsc{Counting Bases of $\mathbb F$-Represented Matroids} is $\#$P-complete.
\end{proof}

\subsection{$\mathbb F$ does not have characteristic two}
When $\mathbb F$ does not have characteristic two, we can proceed in a similar way, but the proof is a little more complicated, as we need to consider more carefully circuits of undirected edges in a template.
We say that a circuit of a template comprising only undirected edges is \emph{good} if it has an odd number of edges labelled $wz$. The following lemma gives us the key property of circuits of undirected edges in the template of a basis.

\begin{lemma}\label{lem:keycharnot2}
Let $G$ be a simple graph without isolated vertices and let $M_k$ and $N_k$ be the associated matrices. Let $C$ be a circuit of $G$ and select a set $Z$ of $2|C|$ elements of $M_k$ as follows. For each $i$ such that $e_i$ is an edge of $C$, choose $j$ with $1\leq j \leq k$ and add either $w_{i,j}$ and $z_{i,j}$, or $x_{i,j}$ and $y_{i,j}$ to $Z$. To simplify notation we omit the second subscript and for each $i$ denote the elements added to $Z$ by either $w_i$ and $z_i$, or $x_i$ and $y_i$.
Then both of the following hold.
\begin{enumerate}
\item If $|\{i: \{w_i,z_i\}\subseteq Z\}|$ is odd then $Z$ is independent in $M_k$ (and $N_k$) and for each vertex $v$ of $C$, $v\in \cl_{N_k}(Z)$.
\item If $|\{i: \{w_i,z_i\}\subseteq Z\}|$ is even then $Z$ is a circuit in $M_k$ (and $N_k$).
\end{enumerate}
\end{lemma}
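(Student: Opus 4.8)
The plan is to compute over $\mathbb{F}$ directly with the representing matrix $A_k$, whose rows relevant to the columns of $Z$ are those indexed by $V$ and by the new elements $f_{i,j}$. Reading off the displayed $3\times 7$ block, if $e_i\in C$ has endvertices $v_a,v_b$ with $a<b$ and $f_i$ denotes the chosen $f_{i,j}$, then the four associated columns are $w_i=\mathbf e_{f_i}$, $z_i=\mathbf e_{v_a}+\mathbf e_{v_b}+\mathbf e_{f_i}$, $x_i=\mathbf e_{v_a}+\mathbf e_{f_i}$ and $y_i=\mathbf e_{v_b}+\mathbf e_{f_i}$. The structural facts I would record are that the $f_i$ for $e_i\in C$ are pairwise distinct, that for each $e_i$ only the two columns chosen for $e_i$ have a non-zero $\mathbf e_{f_i}$-coordinate, and that every column of $Z$ is supported on $\{\mathbf e_v:v\in V(C)\}\cup\{\mathbf e_{f_i}:e_i\in C\}$. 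Call $e_i$ of \emph{$\{w,z\}$-type} if $\{w_i,z_i\}\subseteq Z$ and of \emph{$\{x,y\}$-type} if $\{x_i,y_i\}\subseteq Z$. For $e_i\in C$ put $g_i=z_i-w_i=\mathbf e_{v_a}+\mathbf e_{v_b}$ in the first case and $g_i=x_i-y_i=\mathbf e_{v_a}-\mathbf e_{v_b}$ in the second, and set $U=\operatorname{span}\{g_i:e_i\in C\}$, a subspace of $\langle\mathbf e_v:v\in V(C)\rangle$.

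First I would prove the direct-sum decomposition
\[ \operatorname{span}(Z)=\Bigl(\bigoplus_{e_i\ \{w,z\}\text{-type}}\mathbb F\,w_i\Bigr)\oplus\Bigl(\bigoplus_{e_i\ \{x,y\}\text{-type}}\mathbb F\,y_i\Bigr)\oplus U. \]
Since $\operatorname{span}\{w_i,z_i\}=\operatorname{span}\{w_i,g_i\}$ and $\operatorname{span}\{x_i,y_i\}=\operatorname{span}\{y_i,g_i\}$, the three summands span $\operatorname{span}(Z)$; directness follows because in any relation among them the $\mathbf e_{f_i}$-coordinate equals exactly the coefficient of $w_i$ (resp. of $y_i$), which therefore vanishes, leaving only a relation inside $U$. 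Hence $\dim\operatorname{span}(Z)=|C|+\dim U$, whereas $|Z|=2|C|$.

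Next I would determine $\dim U$ by traversing the cycle $C$. A relation $\sum_i c_ig_i=0$ forces, at each vertex $v$ of $C$ with incident $C$-edges $e$ and $e'$, the equation $c_e\varepsilon_{e,v}+c_{e'}\varepsilon_{e',v}=0$, where $\varepsilon_{e,v}\in\{1,-1\}$ is the coefficient of $\mathbf e_v$ in $g_e$; this coefficient is $1$ at both endvertices of a $\{w,z\}$-type edge, and for an $\{x,y\}$-type edge it is $1$ at the lower-indexed endvertex and $-1$ at the higher-indexed one. Reading the relation around $C$ shows that $|c_e|$ is the same for all $e$ and that, on returning to the start, $c=(-1)^{|C|}(-1)^{n_{xy}}c=(-1)^{n}c$, where $n=|\{i:\{w_i,z_i\}\subseteq Z\}|$, $n_{xy}=|C|-n$, and $|C|+n_{xy}\equiv n\pmod 2$. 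As $\operatorname{char}\mathbb F\neq 2$, a non-zero relation exists precisely when $n$ is even, in which case the relation space is one-dimensional and its generator has every coordinate non-zero. Consequently $\dim U=|C|$ when $n$ is odd and $\dim U=|C|-1$ when $n$ is even.

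The two cases of the lemma now follow. If $n$ is odd then $\dim\operatorname{span}(Z)=2|C|=|Z|$, so $Z$ is independent; moreover $\dim U=|C|=\dim\langle\mathbf e_v:v\in V(C)\rangle$, so $U=\langle\mathbf e_v:v\in V(C)\rangle\subseteq\operatorname{span}(Z)$, giving $v\in\cl_{N_k}(Z)$ for every vertex $v$ of $C$. If $n$ is even then $\dim\operatorname{span}(Z)=|Z|-1$, so $Z$ is dependent with nullity one; reading the $\mathbf e_{f_i}$-coordinate of any relation on $Z$ shows that its two coefficients on $\{w_i,z_i\}$, respectively on $\{x_i,y_i\}$, are negatives of one another, so such a relation is precisely a lift of a relation among the $g_i$ and hence inherits full support. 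A set of nullity one carrying a fully supported linear dependence is a circuit, so $Z$ is a circuit. Finally, because $Z\subseteq E(M_k)$ and $M_k=N_k\setminus(V\cup E)$, independence, circuits and membership in $\cl(Z)$ agree whether computed in $N_k$ or $M_k$. The main obstacle is the sign bookkeeping in the cycle traversal: pinning down the orientation convention for $\{x,y\}$-type edges and getting the parity of the exponent of $-1$ exactly right, since one slip there would interchange the two cases of the lemma.
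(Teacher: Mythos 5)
Your proof is correct and follows essentially the same route as the paper's: both compute directly with the columns of $A_k$, use the $f_{i,j}$-rows to force the two coefficients on each edge's pair of columns to cancel, and traverse the cycle tracking a sign product $(-1)^{|C|+n_{xy}}=(-1)^{n}$ so that, since $\operatorname{char}\mathbb F\neq 2$, a (necessarily fully supported) dependence exists precisely when $n$ is even, with the closure claim following from the rank count. Your direct-sum decomposition of $\operatorname{span}(Z)$ is a slightly tidier packaging of the paper's in-line analysis of the coefficients of a putative dependence, but it is not a different argument.
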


\begin{proof}
For an edge $e_i$ of $C$, we say that $e_i$ is a $wz$-edge if $\{w_i,z_i\}\subseteq Z$, and otherwise we say that it is an $xy$-edge.
We first prove that $Z$ is either independent or a circuit, depending on the parity of $|\{i: \{w_i,z_i\}\subseteq Z\}|$. Consider the submatrix $A$ of $A_k$ containing just the columns indexed by members of $Z$ and consider the coefficients of a non-trivial linear combination of these columns summing to zero. As each row of $A$ is either zero or contains two non-zero entries, both equal to one, we may assume that the non-zero coefficients are all $\pm 1$. Furthermore, for every $wz$-edge $e_i$, the coefficients of $w_i$ and $z_i$ must sum to zero, and similarly for every $xy$-edge $e_i$, the coefficients of $x_i$ and $y_i$ must sum to zero. Now consider two adjacent edges $e_i$ and $e_j$ in $C$, and let $v$ be their common endvertex. As the
row indexed by $v$ contains one non-zero entry in a column indexed by an element of $\{w_i,x_i,y_i,z_i\}\cap Z$ and also one in a column indexed by an element of $\{w_j,x_j,y_j,z_j\}\cap Z$, we deduce that the coefficients of $\{w_i,x_i,y_i,z_i\}\cap Z$ are non-zero if and only those of $\{w_j,x_j,y_j,z_j\}\cap Z$ are non-zero. Consequently all the coefficients in a non-trivial linear combination of the columns of $A$ are non-zero.
Now imagine traversing $C$ in $G$ and suppose that $e_i$ and $e_j$ are consecutive (not necessarily adjacent) $wz$-edges. Then it is not difficult to see that the coefficients of $w_i$ and $w_j$ (and of $z_i$ and $z_j$) have opposite signs. Thus, if
there is an odd number of $wz$-edges, then no non-trivial linear combination of the columns of $A$ sums to zero and $Z$ is independent. Alternatively, if
there is an even number of $wz$-edges, then one can assign coefficients $\pm 1$ to columns indexed by $w_i$ or $z_i$ meeting the necessary conditions we have established, and then it is not difficult to check that non-zero coefficients may be assigned to all the remaining columns in order to give a non-trivial linear combination of the columns of $A$ summing to zero. Thus $Z$ is dependent, and as we have shown that 
all coefficients of a non-trivial linear combination of the 
columns of $A$ summing to zero must be non-zero, we deduce that $Z$ is a circuit.

Finally, suppose that there is an odd number of $wz$-edges and let $V(C)$ denote the vertex set of $C$. Then $r_{N_k}(Z\cup V(C))=2|C|=r_{N_k}(Z)$, so for each vertex $v$ of $C$, $v\in \cl_{N_k}(Z)$.
\end{proof}

The analogue of Proposition~\ref{prop:keychartwo} is as follows.

\begin{proposition}
A subset $B$ of $E(M_k)$ is a basis of $M_k$ if and only if all of the following conditions hold.
\begin{itemize}
\item For all $i$, $B\cap F_{i}$ is independent.
\item For all $i$ and $j$, $|B \cap F_{i,j}| \geq 1$.
\item Every circuit of $T(B)$ comprising only undirected edges is good.
\item It is possible to direct the undirected edges of $T(B)$ so that every vertex has indegree one.
\end{itemize}
\end{proposition}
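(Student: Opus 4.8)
The plan is to mirror the proof of Proposition~\ref{prop:keychartwo}, treating the two implications separately; the one genuinely new ingredient is that in odd characteristic circuits of undirected edges in $T(B)$ are permitted provided they are good, so every place where the characteristic-two argument exploited acyclicity of the undirected subgraph must be redone using Lemma~\ref{lem:keycharnot2}.

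For sufficiency, suppose $B$ satisfies all four conditions, and let $b$, $r$ and $u$ be the numbers of bidirected, (uni)directed and undirected edges of $T(B)$. The fourth condition forces $2b+r+u=n$, so the first two give $|B|=km+2b+r+u=km+n=r(M_k)$, and hence it suffices to prove $\cl_{N_k}(B)=E(N_k)$; for this it is enough to show that every vertex of $G$ lies in $\cl_{N_k}(B)$, since then $e_i\in\cl_{N_k}(\{v_a,v_b\})$ for each edge $e_i=v_av_b$ and, using the second condition, $F_{i,j}\subseteq\cl_{N_k}(\{v_a,v_b\}\cup(B\cap F_{i,j}))$ for all $i,j$. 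The fourth condition forces every component of the undirected subgraph $H$ of $T(B)$ to be either a tree containing exactly one vertex that is the head of a (uni)directed edge or an endvertex of a bidirected edge, or unicyclic and containing no such vertex. Vertices of the first kind lie in $\cl_{N_k}(B)$ by the closure computations recorded earlier. For a tree component of $H$ we propagate outward from its distinguished vertex, using the short check that if $e_i=v_av_b$ is undirected in $T(B)$ then $B\cap F_i$ together with either of $v_a,v_b$ spans the other, whatever the $wz$- or $xy$-label. For a unicyclic component, its cycle $C$ is good by the third condition, so Lemma~\ref{lem:keycharnot2}(1), applied with $Z$ the union over $e_i\in C$ of the two-element set $B\cap F_{i,j}$ at the unique index $j$ with $|B\cap F_{i,j}|=2$, places every vertex of $C$ in $\cl_{N_k}(B)$, after which we propagate along the attached trees as before.

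For necessity, let $B$ be a basis. Conditions one and two follow exactly as in the characteristic-two case, the second using $r(M_k\setminus F_{i,j})<r(M_k)$. For the third condition, if $T(B)$ contained a bad cycle $C$ of undirected edges, then Lemma~\ref{lem:keycharnot2}(2), with the same $Z$, would exhibit a circuit of $M_k$ inside $B$, a contradiction. For the fourth condition we again obtain $2b+r+u=n$ from $|B|=r(M_k)$, and suppose the undirected edges cannot be oriented so that every vertex has indegree one. As in the characteristic-two argument, this forces one of three situations: a vertex with pre-indegree at least two; two distinct vertices of pre-indegree at least one joined by a path of undirected edges; or a component of $H$ carrying a cycle it cannot discharge. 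In the first two cases the closure computations and path-tracing of the characteristic-two proof yield two distinct circuits of $N_k$, hence a circuit of $M_k$ inside $B$; in the third we pick two circuits of undirected edges whose union has more edges than vertices and apply Lemma~\ref{lem:keycharnot2}(1) to each to obtain a dependent subset of $B$. Either way $B$ is not a basis, a contradiction.

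I expect the main obstacle to be that last step: enumerating precisely how the orientability requirement of the fourth condition can fail once the undirected subgraph is allowed to contain good cycles, and checking in each case that the closure-propagation arguments of the characteristic-two proof, reinforced by Lemma~\ref{lem:keycharnot2}, still produce two distinct circuits of $N_k$.
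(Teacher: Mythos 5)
Your overall strategy is the paper's: reuse the characteristic-two argument and patch every step that relied on acyclicity of the undirected subgraph with Lemma~\ref{lem:keycharnot2}. The sufficiency direction is sound, and your structural observation that condition four forces each component of the undirected subgraph $H$ to be either a tree containing exactly one vertex of positive pre-indegree or a unicyclic graph containing none is correct and slightly more explicit than the paper's path-tracing formulation. The necessity of the first three conditions is also handled correctly, with the third following from Lemma~\ref{lem:keycharnot2}(2) exactly as in the paper.

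The gap is in the necessity of the fourth condition. When the orientation fails and neither of your first two situations occurs, the leftover configurations are of two kinds: a component of $H$ with two independent cycles (and no vertex of positive pre-indegree), or a component of $H$ containing a single cycle together with a vertex that is already the head of a (uni)directed edge or an endvertex of a bidirected edge. Your prescription for the third case --- ``pick two circuits of undirected edges whose union has more edges than vertices and apply Lemma~\ref{lem:keycharnot2}(1) to each'' --- only applies to the first kind; in the second kind the component contains exactly one circuit of undirected edges, so there are not two circuits to pick, and no alternative argument is given. This configuration (the paper's cases 3 and 4, where a vertex of indegree one is joined by undirected edges to a good circuit) needs its own treatment: Lemma~\ref{lem:keycharnot2}(1) shows the set $Z\subseteq B$ attached to the good circuit is independent and spans the relevant vertex $z$, while $B\cap F_i$ for the edge $e_i$ directed towards $z$ also spans $z$; the two resulting circuits of $N_k$ through $z$ are distinct, and circuit elimination places a circuit of $M_k$ inside $B$. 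Relatedly, even in the two-cycle case Lemma~\ref{lem:keycharnot2}(1) by itself only yields independence and closure statements, so the same circuit-elimination step at a shared vertex (or along a connecting path) is still needed to produce a dependent subset of $B$; as written, ``apply Lemma~\ref{lem:keycharnot2}(1) to each to obtain a dependent subset'' does not follow.
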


\begin{proof}
Most of the proof follows that of Proposition~\ref{prop:keychartwo}. The main difference concerns circuits of $T(B)$ comprising undirected edges.
To prove the sufficiency of the conditions we modify the last part of the sufficiency argument of Proposition~\ref{prop:keychartwo}.
Suppose that $B$ satisfies each of the conditions. The key step involves showing that for every vertex $v$ in $G$, we have $v\in\cl_{N_k}(B)$.
The last two conditions imply that for a vertex $v$ of $G$,
there is a (possibly empty) path $P$ in $T(B)$, comprising only undirected edges, between $v$ and a vertex $v'$, which either has indegree one
or belongs to a good circuit. Using Lemma~\ref{lem:keycharnot2} for the latter case, we see that in either case $v' \in \cl_{N_k}(B)$ and the proof may continue in the same way as that of Proposition~\ref{prop:keychartwo}.

To show that each condition is necessary we suppose that $B$ is a basis of $M_k$. The necessity of the first two conditions follows in the same way as in the proof of Proposition~\ref{prop:keychartwo} and the necessity of the third follows from Lemma~\ref{lem:keycharnot2}.
The necessity of the final condition follows from a similar argument to that used in the proof of Proposition~\ref{prop:keychartwo}, but there are more cases to consider. Notice that the necessity of the third condition implies that every undirected edge of $T(B)$ belongs to at most one circuit comprising only undirected edges. If it is not possible to direct the undirected edges of $T(B)$ so that each edge has indegree one, then before directing the undirected edges one of the following must occur.
\begin{enumerate}
\item There is a vertex $z$ of $T(B)$ with indegree at least two.
\item There is a vertex $z$ belonging to two edge-disjoint good circuits.
\item There is a vertex $z$ of $T(B)$ with indegree one which belongs to a good circuit.
\item There are vertices $x$ and $y$ of $T(B)$ not belonging to the same good circuit and joined by a path $P$ comprising undirected edges and so that each of $x$ and $y$ either has indegree one or belongs to a good circuit.
\end{enumerate}
To show that each possibility leads to a contradiction, the aim is again to show that there is a vertex $v$ of $B$ such that $B\cup \{v\}$ contains two distinct circuits of $N_k$. The first case is the same as in the proof of Proposition~\ref{prop:keychartwo}. The second and third follow similarly with the aid of Lemma~\ref{lem:keycharnot2} and the final one follows in a similar way to the analogous case in Proposition~\ref{prop:keychartwo}, noting first that by Lemma~\ref{lem:keycharnot2}, if necessary, there are disjoint subsets $B_x$ and $B_y$ of $B$ with $x\in \cl_{N_k}(B_x)$ and $y\in \cl_{N_k}(B_y)$ and then deducing that $y$ (and in fact every vertex of $P$) belongs to $\cl_{N_k}(B_x)$.
\end{proof}

We amend the definition of feasibility to say that a template $T$ is \emph{feasible} if it satisfies the last two conditions in the previous result, that is, if the subgraph induced by its undirected edges contains no circuits including an even number of edges labelled $wz$ and it is possible to direct the undirected edges of $T(B)$ so that every vertex has indegree one.

\begin{proposition}\label{prop:counting2}
Let $G$ be a simple graph without isolated vertices and let $T$ be a feasible template of $G$ with $b$ bidirected edges. Then the number of bases of $M_k$ with template $T$ is
\[ 4^{km}\Big(\frac k4\Big)^n \Big(\frac 3k + 13\Big)^b.\]
\end{proposition}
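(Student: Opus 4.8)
The plan is to mimic the proof of Proposition~\ref{prop:counting} line by line, changing only the one quantity that the characteristic affects. As there, a basis $B$ of $M_k$ with a given feasible template $T$ is assembled by choosing the sets $B\cap F_i$ independently over the edges $e_i$ of $G$, and the number of admissible choices for $B\cap F_i$ depends only on the role of $e_i$ in $T$. For an edge $e_i\notin E(T)$ one needs $|B\cap F_{i,j}|=1$ for every $j$, giving $4^k$ choices; for a (uni)directed or undirected edge carrying a prescribed label, one needs $|B\cap F_{i,j}|=1$ for all $j$ except a single $j^*$, with $B\cap F_{i,j^*}$ equal to the $2$-set named by the label, giving $k\cdot 4^{k-1}$ choices. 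A direct check on the representing matrix $A_k$ shows, exactly as in the characteristic-two case, that each such $B\cap F_i$ is independent with the closure demanded by the corresponding clause in the definition of $T(B)$, so these two per-edge counts carry over unchanged. Only the count for a bidirected edge changes, and the whole argument reduces to recomputing it.

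So fix a bidirected edge $e_i=v_av_b$ with $a<b$. By the basis characterization together with the definition of $T(B)$, being bidirected forces $|B\cap F_i|=k+2=r(F_i)$ with $B\cap F_i$ independent, and since $r(F_i)=k+2$ this independence is equivalent to $B\cap F_i$ spanning $\cl_{N_k}(F_i)$, which automatically contains $\{v_a,v_b,e_i\}$. As $\sum_j|B\cap F_{i,j}|=k+2$ with each term in $\{1,2,3\}$, there are two cases. In Case~A one term is $3$ and the rest are $1$: reading off $A_k$, any three elements of an $F_{i,j^*}$ span $\cl_{N_k}(F_{i,j^*})\supseteq\{v_a,v_b,e_i\}$, so after adjoining any one element from each remaining $F_{i,j}$ the set is independent, and this case contributes $k\binom43 4^{k-1}=k\cdot 4^k$ choices, just as before. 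In Case~B two terms are $2$, at coordinates $j'<j''$ say, and the rest are $1$; after fixing $\{j',j''\}$ and a singleton in each other $F_{i,j}$ (that is $\binom k2 4^{k-2}$ choices) what remains is to decide when a pair of $2$-subsets $P'=\{p'_1,p'_2\}\subseteq F_{i,j'}$ and $P''=\{p''_1,p''_2\}\subseteq F_{i,j''}$ makes $B\cap F_i$ independent. I would show that, because the remaining $k$ elements of $B\cap F_i$ (one per coordinate) form an independent set whose span meets $\langle v_a,v_b\rangle$ only in $0$, this happens precisely when the two difference vectors $p'_2-p'_1$ and $p''_2-p''_1$, both of which lie in the $2$-dimensional space $\langle v_a,v_b\rangle$, are linearly independent there.

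The heart of the matter is then to read off, from $A_k$, the direction of the difference vector of each $2$-subset of an $F_{i,j}$ (subscripts suppressed): up to a non-zero scalar it is $v_a$ for $\{w,x\}$ and $\{y,z\}$, it is $v_b$ for $\{w,y\}$ and $\{x,z\}$, it is $v_a+v_b$ (the column indexed by $e_i$) for $\{w,z\}$, and it is $v_a-v_b$ (not a column of $A_k$) for $\{x,y\}$. This is exactly where the characteristic enters: in characteristic two the last two directions coincide, so there are three directions, each realised by two subsets; in every other characteristic the four vectors $v_a,v_b,v_a+v_b,v_a-v_b$ are pairwise linearly independent in $\langle v_a,v_b\rangle$, so two chosen directions are independent iff they are distinct, and the number of ordered pairs of $2$-subsets achieving distinct directions is $6^2-(2^2+2^2+1^2+1^2)=26$, rather than the value $6^2-3\cdot 2^2=24$ occurring in Proposition~\ref{prop:counting}. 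Hence Case~B contributes $26\binom k2 4^{k-2}$ choices, the number of choices for a bidirected edge is $k\cdot 4^k+26\binom k2 4^{k-2}=4^{k-2}k(13k+3)$, and multiplying the per-edge counts over the $b$ bidirected edges, the $n-2b$ (uni)directed-or-undirected edges and the $m-n+b$ edges outside $E(T)$ yields $(4^k)^{m-n+b}(k\cdot 4^{k-1})^{n-2b}(4^{k-2}k(13k+3))^b=4^{km}(k/4)^n(3/k+13)^b$.

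The main obstacle, and the only genuine deviation from Proposition~\ref{prop:counting}, is Case~B: one must not simply transplant the characteristic-two bookkeeping, but notice that the subset $\{x,y\}$, which contributes nothing to $\cl_{N_k}(B\cap F_i)\cap(V\cup E)$ on its own, can still combine with an appropriate $2$-subset at the other coordinate to create a bidirected edge, which is precisely why the fourth direction $v_a-v_b$ surfaces and the multiplicity pattern $2,2,1,1$ (not $2,2,2$) governs the count. Reconciling the exponents at the end is the routine computation already carried out in the proof of Proposition~\ref{prop:counting}.
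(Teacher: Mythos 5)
Your proposal is correct and follows essentially the same route as the paper: the paper's proof simply defers to Proposition~\ref{prop:counting} and replaces the count $6\cdot 4=24$ for the two-pairs subcase of a bidirected edge by $26$, exactly as you do, and your arithmetic $k\cdot 4^k+\binom{k}{2}\cdot 26\cdot 4^{k-2}=4^{k-2}k(13k+3)$ reproduces the stated formula. Your difference-vector argument (directions $v_a,v_b,v_a+v_b,v_a-v_b$ with multiplicities $2,2,1,1$, giving $36-(4+4+1+1)=26$ admissible ordered pairs) is a clean justification of the figure $26$ that the paper asserts without detail.
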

\begin{proof}
The proof is very similar to that of Proposition~\ref{prop:counting}. The key difference is counting the number of choices for $F_i$ when $i$ is a bidirected edge and $|F_{i,j'}\cap B|=|F_{i,j''}\cap B|=2$ for $j'\ne j''$.  There are now $26$ ways to choose $F_{i,j'}$ and $F_{i,j''}$ compared with $24$ when $\mathbb F$ has characteristic two.

So the number of bases of $M_k$ with template $T$ is
\[ (4^k)^{m-n+b} (k\cdot 4^{k-1})^{n-2b} (k \cdot 4^k + \binom k2 \cdot 26 \cdot 4^{k-2})^b = 4^{km}\Big(\frac k4\Big)^n \Big(\frac 3k + 13\Big)^b.\]
\end{proof}

\begin{theorem}
If $\mathbb F$ is a field with characteristic other than two, then the problem \textsc{Counting Bases of $\mathbb F$-Represented Matroids} is $\#$P-complete.
\end{theorem}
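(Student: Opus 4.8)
The plan is to follow the proof of Theorem~\ref{thm:chartwo} closely, with Proposition~\ref{prop:counting2} playing the role of Proposition~\ref{prop:counting}. Membership in $\#$P is immediate: a basis of $M(A)$ is a witness of polynomial size, and verifying that a prescribed set of columns of $A$ forms a basis is a rank computation over $\mathbb F$, which runs in polynomial time. So the whole content is in the hardness reduction, which I would again take from counting perfect matchings in a simple graph, a $\#$P-complete problem by~\cite{zbMATH03646278}.

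First I would fix a simple graph $G$ with $n$ vertices and $m$ edges, assuming without loss of generality that $G$ has no isolated vertices and that $n$ is even (an odd $n$ forces zero perfect matchings). Using the construction of the previous subsection I would build the $(0,1)$-matrices $A_1,\ldots,A_{n/2+1}$, and hence representations over $\mathbb F$ of the matroids $M_1,\ldots,M_{n/2+1}$; each $A_k$ has dimensions polynomial in $n$ and $m$ and is computable in polynomial time. Writing $b_k$ for the number of bases of $M_k$ and, for $0\le j\le n/2$, writing $t_j$ for the number of feasible templates of $G$ with exactly $j$ bidirected edges (feasibility now in the sense appropriate to $\mathrm{char}\,\mathbb F\ne 2$), I would sum the count of Proposition~\ref{prop:counting2} over all feasible templates, grouped by their number of bidirected edges, to obtain
\[ b_k \;=\; 4^{km}\Big(\frac k4\Big)^{n}\sum_{j=0}^{n/2}\Big(\frac 3k+13\Big)^{j} t_j \qquad (k=1,\ldots,\tfrac n2+1). \]

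Next I would note that the numbers $3/k+13$, for $k=1,\ldots,n/2+1$, are pairwise distinct, since $t\mapsto 3/t$ is strictly decreasing on the positive integers. Dividing out the nonzero, easily computed prefactor $4^{km}(k/4)^{n}$ turns the displayed identities into a linear system in the unknowns $t_0,\ldots,t_{n/2}$ whose coefficient matrix is Vandermonde in these distinct values and hence invertible. All the data are rationals of bit-length polynomial in $n$ and $m$, so after clearing denominators Lemma~\ref{lem:gauss} recovers every $t_j$, in particular $t_{n/2}$, in time polynomial in $n$ and $m$. Since a feasible template with $n/2$ bidirected edges is exactly a perfect matching of $G$ — every vertex is covered by a bidirected edge and the bidirected edges form a matching — computing $t_{n/2}$ solves the perfect matching count, completing the reduction.

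The one step that is not pure routine is the same one as in characteristic two: having Proposition~\ref{prop:counting2} in hand, i.e.\ that the classification of bases of $M_k$ by feasible templates and the per-$F_i$ counting remain valid when $\mathbb F$ does not have characteristic two. That work has already been done above, the additional ingredient over the characteristic-two case being the need to track the parity of the number of $wz$-labelled edges around each circuit of undirected edges (Lemma~\ref{lem:keycharnot2}). Granting it, the theorem follows by the short interpolation argument just described, identical in form to the proof of Theorem~\ref{thm:chartwo}.
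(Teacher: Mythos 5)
Your proposal is correct and follows essentially the same route as the paper, which simply states that the proof is identical to that of Theorem~\ref{thm:chartwo} with Proposition~\ref{prop:counting2} supplying the modified per-template count $(3/k+13)^b$. Your additional explicit remarks (distinctness of the values $3/k+13$, the Vandermonde system, and the appeal to Lemma~\ref{lem:gauss}) just spell out the interpolation step that the paper leaves implicit.
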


The proof is identical to that of Theorem~\ref{thm:chartwo}.

\section*{Acknowledgement}
We thank Mark Jerrum and Dillon Mayhew for helpful suggestions concerning Appendix~\ref{sec:appendix}.

\bibliography{difficultpoints}
\bibliographystyle{plain}

\end{document}